\theoremstyle{definition}
\newtheorem{definition}{Definition}[section]
\newtheorem*{openproblem}{Open Problem}
\theoremstyle{plain}
\newtheorem{lemma}[definition]{Lemma}
\newtheorem{theorem}[definition]{Theorem}
\begin{document}


\title[Fundamental invariants for $3 \times 3 \times 3$ arrays]
{Fundamental invariants for the action of
$SL_3(\mathbb{C}) \times SL_3(\mathbb{C}) \times SL_3(\mathbb{C})$
on $3 \times 3 \times 3$ arrays}

\author{Murray R. Bremner}

\address{Department of Mathematics and Statistics, University of Saskatchewan, Canada}

\email{bremner@math.usask.ca}

\author{Jiaxiong Hu}

\address{Department of Mathematics, Simon Fraser University, Canada}

\email{hujiaxiong@gmail.com}

\date{28 March 2012}

\begin{abstract}
We determine the three fundamental invariants in the entries of a $3 \times 3 \times 3$ array
over $\mathbb{C}$ as explicit polynomials in the 27 variables $x_{ijk}$ for $1 \le i, j, k \le 3$.
By the work of Vinberg on $\theta$-groups, it is known that these homogeneous polynomials have degrees 6, 9 and 12;
they freely generate the algebra of invariants for the Lie group
$SL_3(\mathbb{C}) \times SL_3(\mathbb{C}) \times SL_3(\mathbb{C})$
acting irreducibly on its natural representation $\mathbb{C}^3 \otimes \mathbb{C}^3 \otimes \mathbb{C}^3$.
These generators have respectively 1152, 9216 and 209061 terms;
we find compact expressions in terms of the orbits of the finite group
$( S_3 \times S_3 \times S_3 ) \rtimes S_3$
acting on monomials of weight zero for the action of the Lie algebra
$\mathfrak{sl}_3(\mathbb{C}) \oplus \mathfrak{sl}_3(\mathbb{C}) \oplus \mathfrak{sl}_3(\mathbb{C})$.
\end{abstract}

\keywords{Multidimensional arrays, polynomial invariants, semisimple Lie algebras, representation theory}

\subjclass[2010]{Primary 13A50; Secondary 15A72, 17B10}

\maketitle


\section{Introduction}

We consider $3 \times 3 \times 3$ arrays $X = ( x_{ijk} )$ where $x_{ijk} \in \mathbb{C}$ for $1 \le i, j, k \le 3$.
We are concerned with homogeneous polynomials in the entries $x_{ijk}$ of these arrays which are invariant under the action
of the Lie group $SL_3(\mathbb{C}) \times SL_3(\mathbb{C}) \times SL_3(\mathbb{C})$ acting by simultaneous changes
of basis along the three directions.

Following Bremner et al.~\cite{Bremner,BBS}, we use a computational approach, based on the representation theory
of the special linear Lie algebra $\mathfrak{sl}_3(\mathbb{C})$, to express these invariant polynomials as elements
in the nullspace of a large integer matrix, and then to determine explicitly the generators of the algebra of invariants.
Apart from their intrinsic interest, these generators can in principle be used to determine an explicit form for
the hyperdeterminant of a $3 \times 3 \times 3$ array in the sense of Gelfand et al. \cite{GKZ}; see the open problem
at the end of this paper.

Vinberg \cite{Vinberg} generalized the notion of Weyl group from Lie groups to $\theta$-groups; a $\theta$-group
is the group of fixed points of an automorphism of finite order of a complex semisimple Lie group.
He showed that the corresponding Weyl groups are generated by complex reflections, and obtained the corollary that
the algebra of invariants is a polynomial algebra: it is freely generated by a finite set of fundamental
invariants.
In particular, he embedded the Lie algebra
$\mathfrak{sl}_{333}(\mathbb{C}) = \mathfrak{sl}_3(\mathbb{C}) \oplus \mathfrak{sl}_3(\mathbb{C}) \oplus \mathfrak{sl}_3(\mathbb{C})$
into an exceptional Lie algebra of type $E_6$, and deduced that the algebra of invariants for the natural representation of $\mathfrak{sl}_{333}(\mathbb{C})$
is freely generated by homogeneous polynomials in degrees 6, 9 and 12.
A general computational framework for studying $\theta$-groups has been developed recently by de Graaf
\cite{deGraaf}.

An expression for the fundamental invariant of degree 9 in terms of $3 \times 3$ slices was obtained by Strassen \cite[Lemma 4.5]{Strassen}.
A geometric interpretation of Strassen's result has been given by Ottaviani \cite[\S 3]{Ottaviani}; see also
Domokos and Drensky \cite{DomokosDrensky}.
Nurmiev \cite{Nurmiev} obtained an implicit description of all three invariants in terms of convolutions of volume forms.
An approach to these polynomials using classical invariant theory has been given by Briand et al.~\cite{BLTV}.
For an application to quantum information theory, see Duff and Ferrara \cite{DF}.
For recent related work on $2 \times 2 \times 2 \times 2$ arrays, see Huggins et al.~\cite{Huggins}.

In this paper we obtain explicit expressions for the fundamental invariants in degrees 6, 9, 12 for
the action of $SL_3(\mathbb{C}) \times SL_3(\mathbb{C}) \times SL_3(\mathbb{C})$ on $3 \times 3 \times 3$ arrays.
These invariants have respectively 1152, 9216 and 209061 terms.
We express these homogeneous polynomials in terms of the orbits of the finite symmetry group
$( S_3 \times S_3 \times S_3 ) \rtimes S_3$
acting on monomials of weight zero for the action of $\mathfrak{sl}_{333}(\mathbb{C})$.
We use this action to efficiently represent polynomials as orbit sums;
the invariants, especially in degree 12, involve far too many monomials to be written down completely.
Using this method, it becomes possible to represent the fundamental invariants in
a paper (and not just on a hard disk).
Files containing the invariants are available as ancillary files attached to the \texttt{arXiv} version of this paper.


\section{Preliminaries}

\subsection{Homogeneous polynomials on $3 \times 3 \times 3$ arrays}

Let $e_1, e_2, e_3$ be the standard basis vectors of $\mathbb{C}^3$.
We consider the tensor product
$\mathbb{C}^{333} = \mathbb{C}^3 \otimes \mathbb{C}^3 \otimes \mathbb{C}^3$.
Every element of $\mathbb{C}^{333}$ is a finite sum of elements of the form
$u \otimes v \otimes w$ where $u, v, w \in \mathbb{C}^3$.
A basis for $\mathbb{C}^{333}$ consists of the 27 simple tensors
$e_{ijk} = e_i \otimes e_j \otimes e_k$ for $1 \le i, j, k \le 3$.
We therefore identify the array $X = ( x_{ijk} )$ with the element
  \[
  X
  =
  \sum_{i=1}^3
  \sum_{j=1}^3
  \sum_{k=1}^3
  x_{ijk}
  e_{ijk},
  \qquad
  x_{ijk} \in \mathbb{C}.
  \]
We introduce variables $x_{ijk}$ for $1 \le i, j, k \le 3$ corresponding to the entries of $X$;
this notation is ambiguous but should not be confusing. Strictly speaking,
$x_{ijk}$ is a coordinate function on $\mathbb{C}^{333}$ and is therefore a dual
basis vector $e_i^\ast \otimes e_j^\ast \otimes e_k^\ast$,
but this distinction will not be important for us.
We consider the polynomial algebra $P = \mathbb{C}[ \, x_{ijk} \mid 1 \le i, j, k \le 3 \, ]$.
A basis of $P$ consists of the monomials
  \begin{equation}
  \label{monomial}
  M(E)
  =
  \prod_{i=1}^3 \prod_{j=1}^3 \prod_{k=1}^3
  x_{ijk}^{e_{ijk}}
  =
  x_{111}^{e_{111}}
  \cdots
  x_{ijk}^{e_{ijk}}
  \cdots
  x_{333}^{e_{333}},
  \end{equation}
where $E = ( e_{ijk} )$ is an exponent array of non-negative integers.
The homogeneous subspace $P_N$ of degree $N$ has a basis
consisting of monomials for which
  \[
  \sum_{i=1}^3 \sum_{j=1}^3 \sum_{k=1}^3 e_{ijk} = N.
  \]
A linear operator on $\mathbb{C}^3$ is represented by a $3 \times 3$ matrix $A = ( a_{ij} )$ where $a_{ij} \in \mathbb{C}$.
The action of a triple of invertible operators $(A,B,C)$ on $x_{ijk}$ is given by
  \[
  ( A, B, C ) \cdot x_{ijk}
  =
  \sum_{p=1}^3
  \sum_{q=1}^3
  \sum_{r=1}^3
  a_{pi}
  b_{qj}
  c_{rk}
  x_{pqr}.
  \]
This action extends to polynomials as follows:
  \[
  (A,B,C) \cdot
  f \big( \dots, x_{ijk}, \dots \big)
  =
  f\big(
  \dots,
  (A,B,C) \cdot x_{ijk},
  \dots
  \big).
  \]
We call $f$ an invariant if
$(A,B,C) \cdot f = f$ when $\det(A) = \det(B) = \det(C) = 1$.

\subsection{Representations of Lie algebras}

The $n \times n$ complex matrices of determinant 1 form the special linear group $SL_n(\mathbb{C})$.
Finite-dimensional representations of $SL_n(\mathbb{C})$ can be studied in
terms of the Lie algebra $\mathfrak{sl}_n(\mathbb{C})$, which consists of all
$n \times n$ complex matrices of trace 0. The standard basis of $\mathfrak{sl}_n(\mathbb{C})$
consists of
  \begin{itemize}
  \item the matrix units $U_{i,j}$ for $i \ne j$ with $(i,j)$ entry 1 and
      other entries 0,
  \item the diagonal matrices $H_i = U_{i,i} - U_{i+1,i+1}$ for $i = 1, 2,
      \dots, n{-}1$.
  \end{itemize}
The simple root vectors are the matrix units $T_i = U_{i,i+1}$ for $i = 1, 2,
\dots, n{-}1$. The natural representation of $\mathfrak{sl}_n(\mathbb{C})$ is
its irreducible action on $\mathbb{C}^n$.  We have
  \allowdisplaybreaks
  \begin{align*}
  H_i \cdot e_j
  =
  \begin{cases}
  e_j &\text{if $j = i$} \\
  -e_j &\text{if $j = i{+}1$} \\
  0 &\text{otherwise},
  \end{cases}
  \qquad \qquad
  T_i \cdot e_j
  =
  \begin{cases}
  e_{j-1} &\text{if $j = i{+}1$} \\
  0 &\text{otherwise}.
  \end{cases}
  \end{align*}
We are concerned exclusively with the Lie algebra $\mathfrak{sl}_3(\mathbb{C})$; in this case
  \[
  H_1 =
  \left[ \begin{array}{rrr}
  1 &\!\!\!\!  0 & 0 \\
  0 &\!\!\!\! -1 & 0 \\
  0 &\!\!\!\!  0 & 0
  \end{array} \right]\!,
  \;
  H_2 =
  \left[ \begin{array}{rrr}
  0 & 0 &\!\!\!\!  0 \\
  0 & 1 &\!\!\!\!  0 \\
  0 & 0 &\!\!\!\! -1
  \end{array} \right]\!,
  \;
  T_1 =
  \left[ \begin{array}{rrr}
  0 & 1 & 0 \\
  0 & 0 & 0 \\
  0 & 0 & 0
  \end{array} \right]\!,
  \;
  T_2 =
  \left[ \begin{array}{rrr}
  0 & 0 & 0 \\
  0 & 0 & 1 \\
  0 & 0 & 0
  \end{array} \right]\!.
  \]
The brackets of $H_i$ and $T_j$ are
  \begin{equation}
  \label{hebrackets}
  [ H_1, T_1 ] = 2 T_1,
  \qquad
  [ H_1, T_2 ] = - T_2,
  \qquad
  [ H_2, T_1 ] = - T_1,
  \qquad
  [ H_2, T_2 ] = 2 T_2.
  \end{equation}
We consider the action of
$
\mathfrak{sl}_{333}(\mathbb{C}) =
\mathfrak{sl}_3(\mathbb{C}) \oplus
\mathfrak{sl}_3(\mathbb{C}) \oplus
\mathfrak{sl}_3(\mathbb{C})
$
on its irreducible representation $\mathbb{C}^{333}$.
For $\ell = 1, 2, 3$ we write a superscript $(\ell)$ to indicate the basis elements of the $\ell$-th summand.
These elements act on $x_{ijk}$ as follows ($m = 1, 2$):
  \allowdisplaybreaks
  \begin{alignat*}{2}
  H^{(1)}_m \cdot x_{ijk}
  &=
  \begin{cases}
  x_{ijk} &\text{if $i = m$} \\
  -x_{ijk} &\text{if $i = m{+}1$} \\
  0 &\text{otherwise}
  \end{cases}
  &\qquad
  T^{(1)}_m \cdot x_{ijk}
  &=
  \begin{cases}
  x_{i-1,j,k} &\text{if $i = m{+}1$} \\
  0 &\text{otherwise}
  \end{cases}
  \\
  H^{(2)}_m \cdot x_{ijk}
  &=
  \begin{cases}
  x_{ijk} &\text{if $j = m$} \\
  -x_{ijk} &\text{if $j = m{+}1$} \\
  0 &\text{otherwise}
  \end{cases}
  &\qquad
  T^{(2)}_m \cdot x_{ijk}
  &=
  \begin{cases}
  x_{i,j-1,k} &\text{if $j = m{+}1$} \\
  0 &\text{otherwise}
  \end{cases}
  \\
  H^{(3)}_m \cdot x_{ijk}
  &=
  \begin{cases}
  x_{ijk} &\text{if $k = m$} \\
  -x_{ijk} &\text{if $k = m{+}1$} \\
  0 &\text{otherwise}
  \end{cases}
  &\qquad
  T^{(3)}_m \cdot x_{ijk}
  &=
  \begin{cases}
  x_{i,j,k-1} &\text{if $k = m{+}1$} \\
  0 &\text{otherwise}
  \end{cases}
  \end{alignat*}
The action of a Lie algebra $L$ on a tensor product $V \otimes W$ of
$L$-modules is given by
$A \cdot ( v \otimes w ) = ( A \cdot v ) \otimes w + v \otimes ( A \cdot w )$
for
$A \in L$, $v \in V$, $w \in W$.
If we identify the $N$-th symmetric power $S^N V$ of the $p$-dimensional
$L$-module $V$ with the space of homogeneous polynomials of degree $N$ on a
basis $v_1, \dots, v_p$ of $V$, then the action of $L$ on $S^N V$ is given by
  \allowdisplaybreaks
  \begin{align*}
  &
  A \cdot ( v_1^{e_1} v_2^{e_2} \cdots v_p^{e_p} )
  =
  \sum_{i=1}^p v_1^{e_1} \cdots ( A \cdot v_i^{e_i} ) \cdots v_p^{e_p}
  \\
  &=
  \sum_{i=1}^p v_1^{e_1} \cdots \big( e_i v_i^{e_i-1} ( A \cdot v_i ) \big) \cdots v_p^{e_p}
  =
  \sum_{i=1}^p e_i \, v_1^{e_1} \cdots v_i^{e_i-1} \cdots v_p^{e_p} ( A \cdot v_i ).
  \end{align*}

\begin{lemma} \label{weightlemma}
For $m = 1, 2$ we have
  \allowdisplaybreaks
  \begin{align*}
  &
  H^{(1)}_m \cdot
  \big(
  x_{111}^{e_{111}}
  \cdots
  x_{333}^{e_{333}}
  \big)
  =
  \sum_{j=1}^3 \sum_{k=1}^3
  ( e_{m,j,k} - e_{m+1,j,k} )
  \,
  x_{111}^{e_{111}}
  \cdots
  x_{333}^{e_{333}},
  \\
  &
  H^{(2)}_m \cdot
  \big(
  x_{111}^{e_{111}}
  \cdots
  x_{333}^{e_{333}}
  \big)
  =
  \sum_{i=1}^3 \sum_{k=1}^3
  ( e_{i,m,k} - e_{i,m+1,k} )
  \,
  x_{111}^{e_{111}}
  \cdots
  x_{333}^{e_{333}},
  \\
  &
  H^{(3)}_m \cdot
  \big(
  x_{111}^{e_{111}}
  \cdots
  x_{333}^{e_{333}}
  \big)
  =
  \sum_{i=1}^3 \sum_{j=1}^3
  ( e_{i,j,m} - e_{i,j,m+1} )
  \,
  x_{111}^{e_{111}}
  \cdots
  x_{333}^{e_{333}}.
  \end{align*}
\end{lemma}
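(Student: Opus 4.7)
The plan is to apply the general formula for the action of a Lie algebra element on a symmetric power, which is displayed immediately before the lemma, to the specific case $A = H^{(\ell)}_m$ with $v_i$ running over the 27 variables $x_{ijk}$. The key observation that makes the computation collapse is that each $H^{(\ell)}_m$ acts \emph{diagonally} on the chosen basis: from the table just above the lemma, $H^{(1)}_m \cdot x_{ijk}$ equals $(\delta_{i,m} - \delta_{i,m+1})\, x_{ijk}$, and similarly for $\ell = 2, 3$ with the role of $i$ played by $j$ or $k$.

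First I would substitute $A \cdot v_i = \lambda_i v_i$ (where $\lambda_i$ is the relevant eigenvalue $\pm 1$ or $0$) into the final expression
\[
A \cdot (v_1^{e_1}\cdots v_p^{e_p}) = \sum_{i=1}^p e_i\, v_1^{e_1}\cdots v_i^{e_i-1}\cdots v_p^{e_p}\,(A \cdot v_i).
\]
Since $A \cdot v_i = \lambda_i v_i$, the factor $v_i^{e_i-1}(A \cdot v_i)$ becomes $\lambda_i v_i^{e_i}$, restoring the original monomial. Therefore the action reduces to scalar multiplication by $\sum_{i=1}^p e_i \lambda_i$, i.e., the total weight of the monomial.

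Applied to $A = H^{(1)}_m$, the sum $\sum e_i \lambda_i$ becomes
\[
\sum_{i,j,k} e_{ijk}\,(\delta_{i,m} - \delta_{i,m+1}) = \sum_{j,k}\bigl(e_{m,j,k} - e_{m+1,j,k}\bigr),
\]
which is exactly the claimed coefficient. The cases $\ell = 2$ and $\ell = 3$ are completely analogous: the only change is which index of $e_{ijk}$ is fixed to $m$ or $m{+}1$, and which two indices are summed over.

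There is no real obstacle here; the lemma is essentially a bookkeeping statement that the Cartan subalgebra acts diagonally on monomials with eigenvalue equal to the sum of the individual variable weights. The only point requiring slight care is verifying that the Leibniz-type formula displayed before the lemma is being used with the correct indexing when $v_i = x_{ijk}$ is labelled by a triple rather than a single index, but this is purely notational.
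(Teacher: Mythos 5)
Your argument is correct and is exactly the computation the paper intends: the lemma is stated as an immediate consequence of the displayed symmetric-power action formula together with the diagonal action of $H^{(\ell)}_m$ on the variables $x_{ijk}$, which is precisely what you carry out. Nothing is missing.
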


The eigenvalues of $x_{111}^{e_{111}} \cdots x_{333}^{e_{333}}$ for the elements $H^{(\ell)}_m$ will be denoted
  \allowdisplaybreaks
  \begin{alignat*}{3}
  \omega_{11} &= \sum_{j=1}^3 \sum_{k=1}^3 ( e_{1jk} {-} e_{2jk} ),
  &\quad
  \omega_{12} &= \sum_{j=1}^3 \sum_{k=1}^3 ( e_{2jk} {-} e_{3jk} ),
  &\quad
  \omega_{21} &= \sum_{i=1}^3 \sum_{k=1}^3 ( e_{i1k} {-} e_{i2k} ),
  \\
  \omega_{22} &= \sum_{i=1}^3 \sum_{k=1}^3 ( e_{i2k} {-} e_{i3k} ),
  &\quad
  \omega_{31} &= \sum_{i=1}^3 \sum_{j=1}^3 ( e_{ij1} {-} e_{ij2} ),
  &\quad
  \omega_{32} &= \sum_{i=1}^3 \sum_{j=1}^3 ( e_{ij2} {-} e_{ij3} ).
  \end{alignat*}
The weight of $x_{111}^{e_{111}} \cdots x_{333}^{e_{333}}$ is the tuple
$(\omega_{11},\omega_{12},\omega_{21},\omega_{22},\omega_{31},\omega_{32})$.
A monomial has weight zero if
$\omega_{\ell m} = 0$ for $\ell = 1, 2, 3$ and $m = 1, 2$.
For given degree and given weight, the weight space
$W( N \mid \omega_{11}, \omega_{12}, \omega_{21}, \omega_{22}, \omega_{31}, \omega_{32} )$
is the subspace of $P_N$ spanned by the corresponding monomials.

\begin{lemma} \label{actionlemma}
For $m = 1, 2$ we have
  \allowdisplaybreaks
  \begin{align*}
  &
  T^{(1)}_m \cdot
  \big(
  x_{111}^{e_{111}}
  \cdots
  x_{333}^{e_{333}}
  \big)
  =
  \sum_{j=1}^3 \sum_{k=1}^3
  \,
  e_{m+1,j,k}
  \,
  x_{111}^{e_{111}}
  \cdots
  x_{m,j,k}^{e_{m,j,k}+1}
  \cdots
  x_{m+1,j,k}^{e_{m+1,j,k}-1}
  \cdots
  x_{333}^{e_{333}},
  \\
  &
  T^{(2)}_m \cdot
  \big(
  x_{111}^{e_{111}}
  \cdots
  x_{333}^{e_{333}}
  \big)
  =
  \sum_{i=1}^3 \sum_{k=1}^3
  \,
  e_{i,m+1,k}
  \,
  x_{111}^{e_{111}}
  \cdots
  x_{i,m,k}^{e_{i,m,k}+1}
  \cdots
  x_{i,m+1,k}^{e_{i,m+1,k}-1}
  \cdots
  x_{333}^{e_{333}},
  \\
  &
  T^{(3)}_m \cdot
  \big(
  x_{111}^{e_{111}}
  \cdots
  x_{333}^{e_{333}}
  \big)
  =
  \sum_{i=1}^3 \sum_{j=1}^3
  \,
  e_{i,j,m+1}
  \,
  x_{111}^{e_{111}}
  \cdots
  x_{i,j,m}^{e_{i,j,m}+1}
  \cdots
  x_{i,j,m+1}^{e_{i,j,m+1}-1}
  \cdots
  x_{333}^{e_{333}}.
  \end{align*}
\end{lemma}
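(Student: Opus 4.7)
The plan is to apply the Leibniz-type formula for the action of a Lie algebra element on a symmetric power, stated in the display immediately preceding Lemma~\ref{weightlemma}, together with the explicit action of $T^{(\ell)}_m$ on the individual coordinate functions $x_{ijk}$ given in the preliminaries. Identifying $\mathbb{C}^{333}$ with $V$ and the monomial basis of $S^N V$ with products of the $x_{ijk}$, the derivation formula reads
\[
A \cdot \bigl( x_{111}^{e_{111}} \cdots x_{333}^{e_{333}} \bigr)
=
\sum_{i,j,k} e_{ijk} \, x_{111}^{e_{111}} \cdots x_{ijk}^{e_{ijk}-1} \cdots x_{333}^{e_{333}} \, ( A \cdot x_{ijk} ),
\]
where the sum runs over all $27$ index triples.

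First I would treat $T^{(1)}_m$. From the preliminaries, $T^{(1)}_m \cdot x_{ijk} = x_{i-1,j,k}$ when $i = m{+}1$ and vanishes otherwise, so only the nine triples $(m{+}1, j, k)$ contribute. Reindexing the summand $(m{+}1, j, k)$ and substituting $x_{m,j,k}$ for $A \cdot x_{ijk}$ gives
\[
T^{(1)}_m \cdot M
=
\sum_{j=1}^3 \sum_{k=1}^3 e_{m+1,j,k} \, x_{111}^{e_{111}} \cdots x_{m+1,j,k}^{e_{m+1,j,k}-1} \cdots x_{333}^{e_{333}} \cdot x_{m,j,k},
\]
and absorbing the extra factor $x_{m,j,k}$ into the product raises $x_{m,j,k}$'s exponent by $1$, yielding exactly the expression in the lemma. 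The derivations for $T^{(2)}_m$ and $T^{(3)}_m$ are identical modulo permuting the role of the three tensor slots, using the corresponding actions $T^{(2)}_m \cdot x_{ijk} = x_{i,j-1,k}$ when $j = m{+}1$ and $T^{(3)}_m \cdot x_{ijk} = x_{i,j,k-1}$ when $k = m{+}1$.

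There is essentially no obstacle; the proof is a one-line computation once the derivation rule and the weight-one action on variables are invoked. The only subtlety worth noting is the degenerate case $e_{m+1,j,k} = 0$, where the formal exponent $e_{m+1,j,k} - 1 = -1$ would be undefined as written; but in that case the coefficient $e_{m+1,j,k}$ in front is itself zero, so the offending term drops out of the sum and the formula remains valid under the usual convention.
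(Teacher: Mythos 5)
Your proposal is correct and follows exactly the route the paper intends: the paper states Lemma~\ref{actionlemma} without proof, regarding it as an immediate consequence of the displayed derivation formula for the action of $L$ on $S^N V$ together with the explicit action of $T^{(\ell)}_m$ on the variables $x_{ijk}$, which is precisely the computation you carry out (including the harmless convention for the $e_{m+1,j,k}=0$ terms).
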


By \eqref{hebrackets} the elements $T^{(\ell)}_m$ induce linear maps:
  \[
  \begin{array}{l}
  T^{(1)}_1 \colon W( N \mid 0, 0, 0, 0, 0, 0 ) \longrightarrow W( N \mid 2, -1, 0, 0, 0, 0 ),
  \\
  T^{(1)}_2 \colon W( N \mid 0, 0, 0, 0, 0, 0 ) \longrightarrow W( N \mid -1, 2, 0, 0, 0, 0 ),
  \\
  T^{(2)}_1 \colon W( N \mid 0, 0, 0, 0, 0, 0 ) \longrightarrow W( N \mid 0, 0, 2, -1, 0, 0 ),
  \\
  T^{(2)}_2 \colon W( N \mid 0, 0, 0, 0, 0, 0 ) \longrightarrow W( N \mid 0, 0, -1, 2, 0, 0 ),
  \\
  T^{(3)}_1 \colon W( N \mid 0, 0, 0, 0, 0, 0 ) \longrightarrow W( N \mid 0, 0, 0, 0, 2, -1 ),
  \\
  T^{(3)}_2 \colon W( N \mid 0, 0, 0, 0, 0, 0 ) \longrightarrow W( N \mid 0, 0, 0, 0, -1, 2 ).
  \end{array}
  \]
We write $\Omega_{\ell m}$ for the nonzero weights and form the direct sum:
  \[
  \Lambda_N = ( T^{(1)}_1, \dots, T^{(3)}_2 ) \colon
  W( N \mid 0, 0, 0, 0, 0, 0 )
  \longrightarrow
  \bigoplus_{\ell=1}^3 \bigoplus_{m=1}^2 W( N \mid \Omega_{\ell m} ).
  \]

\begin{lemma}
The invariant polynomials in degree $N$ for the action of $\mathfrak{sl}_{333}(\mathbb{C})$ on the space
$\mathbb{C}^{333}$ are the (nonzero) elements of the kernel of the linear map $\Lambda_N$.
\end{lemma}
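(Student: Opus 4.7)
The plan is to reduce invariance under the connected group $SL_3(\mathbb{C})^{\times 3}$ to a purely Lie-algebraic annihilation condition, and then to shave that condition down to exactly what $\Lambda_N$ detects. Since each factor $SL_3(\mathbb{C})$ is connected, a polynomial $f \in P_N$ is fixed by the group action if and only if $X \cdot f = 0$ for every $X \in \mathfrak{sl}_{333}(\mathbb{C})$, by the standard correspondence between rational actions of a connected complex linear group and its Lie algebra on a polynomial representation. So the invariants in $P_N$ are exactly the common annihilators of the full Lie algebra action.

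I would next split this annihilation condition according to the root decomposition $\mathfrak{sl}_{333}(\mathbb{C}) = \mathfrak{h} \oplus \mathfrak{n}^+ \oplus \mathfrak{n}^-$, where $\mathfrak{h}$ is the Cartan spanned by the six $H^{(\ell)}_m$, and $\mathfrak{n}^\pm$ are spanned by the positive and negative root vectors. Lemma~\ref{weightlemma} shows that $\mathfrak{h}$ acts diagonally on the monomial basis of $P_N$ with eigenvalues equal to the six weight components $\omega_{\ell m}$, so the condition $H^{(\ell)}_m \cdot f = 0$ for all $\ell, m$ is exactly that $f \in W(N \mid 0,0,0,0,0,0)$. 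For such an $f$, invariance under the whole Lie algebra reduces to annihilation by $\mathfrak{n}^+$ and $\mathfrak{n}^-$; since each copy of $\mathfrak{n}^+$ is generated as a Lie algebra by its simple root vectors $T^{(\ell)}_m$ (and $\mathfrak{n}^-$ by their transposes), this further reduces to $T^{(\ell)}_m \cdot f = 0$ for all $\ell, m$ together with the analogous six conditions for the negative simple root vectors. The first batch is precisely $\Lambda_N(f) = 0$.

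The main obstacle is the remaining asymmetry: why should vanishing on $\mathfrak{n}^+$ suffice, when $\Lambda_N$ only tests the positive simple root vectors? I would handle this through Weyl's complete reducibility theorem. The space $P_N$ is a finite-dimensional $\mathfrak{sl}_{333}(\mathbb{C})$-module, so it decomposes as a direct sum of irreducible modules indexed by dominant integral highest weights. A nonzero vector of weight $0$ annihilated by $\mathfrak{n}^+$ must project to a highest weight vector in each irreducible summand on which it has nonzero component; since that highest weight would then have to equal $0$, only the trivial one-dimensional isotypic component can contribute. Consequently $f$ lies in the $\mathfrak{sl}_{333}(\mathbb{C})$-fixed subspace of $P_N$ and is automatically annihilated by $\mathfrak{n}^-$ as well. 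This identifies the invariants in $P_N$ with $\ker \Lambda_N$, as claimed.
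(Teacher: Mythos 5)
Your proof is correct. The paper states this lemma without proof, treating it as standard highest-weight theory, and your argument supplies exactly the justification implicit in its setup: connectedness of $SL_3(\mathbb{C})\times SL_3(\mathbb{C})\times SL_3(\mathbb{C})$ to pass between group invariance and annihilation by $\mathfrak{sl}_{333}(\mathbb{C})$, the diagonal Cartan action of Lemma \ref{weightlemma} to force membership in $W(N \mid 0,0,0,0,0,0)$ (which is already the domain of $\Lambda_N$), generation of $\mathfrak{n}^+$ by the simple root vectors $T^{(\ell)}_m$ so that $\ker\Lambda_N$ consists of $\mathfrak{n}^+$-annihilated weight-zero vectors, and Weyl complete reducibility to conclude that such a vector lies in the trivial isotypic component and is therefore killed by $\mathfrak{n}^-$ as well.
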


\begin{theorem} \label{vinbergtheorem}
The algebra of invariants for $\mathfrak{sl}_{333}(\mathbb{C})$ acting on $\mathbb{C}^{333}$
is freely generated by three fundamental invariants in degrees $6$, $9$ and $12$.
Thus for $N = 6$, $9$, $12$ the nullspace of $\Lambda_N$ has dimension $1$, $1$, $2$ respectively.
\end{theorem}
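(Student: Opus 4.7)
The first sentence of the theorem is Vinberg's result and I would not attempt to reprove it; I would simply cite \cite{Vinberg} (the embedding of $\mathfrak{sl}_{333}(\mathbb{C})$ into $E_6$ as a $\mathbb{Z}/3$-graded piece, the identification of the invariants with those of a complex reflection group — the $\theta$-Weyl group of type $G_{25}$ — and the Shephard--Todd--Chevalley theorem giving freeness with fundamental degrees $6,9,12$). The remaining content of the theorem is the numerical assertion about $\dim \ker \Lambda_N$, and that is what actually needs an argument here.

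The plan is to reduce the dimension count to a generating-function calculation. By the lemma immediately preceding the theorem, the space of invariants of degree $N$ coincides with $\ker \Lambda_N$, so the graded algebra of invariants $I = \bigoplus_N \ker \Lambda_N$ has Hilbert series $\sum_N (\dim \ker \Lambda_N) \, t^N$. By the Vinberg/Shephard--Todd conclusion, $I$ is a free polynomial algebra $\mathbb{C}[f_6,f_9,f_{12}]$ on algebraically independent homogeneous generators of the stated degrees, so its Hilbert series equals
\[
\frac{1}{(1-t^6)(1-t^9)(1-t^{12})}.
\]
Thus $\dim \ker \Lambda_N$ is the number of triples $(a,b,c) \in \mathbb{Z}_{\ge 0}^3$ with $6a+9b+12c = N$.

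A direct inspection then finishes the proof: for $N=6$ the only solution is $(1,0,0)$; for $N=9$ the only solution is $(0,1,0)$; for $N=12$ the solutions are $(2,0,0)$ and $(0,0,1)$. Hence the nullspace dimensions are $1,1,2$ respectively.

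I do not expect any real obstacle in this step, since the hard part (free generation in degrees $6,9,12$) is imported from \cite{Vinberg}. The only subtlety worth pointing out in the write-up is that ``invariant under $SL_3(\mathbb{C})^{\times 3}$'' and ``annihilated by $\mathfrak{sl}_3(\mathbb{C})^{\oplus 3}$'' coincide for polynomial functions because $SL_3(\mathbb{C})$ is connected, so a polynomial is $SL_3(\mathbb{C})^{\times 3}$-invariant iff it is killed by all raising operators $T^{(\ell)}_m$ on a weight-zero monomial basis — which is exactly what the lemma asserts about $\ker \Lambda_N$. Once this identification is in place, the numerical claim is immediate from the Hilbert-series computation above.
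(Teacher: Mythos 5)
Your proposal is correct and takes essentially the same route as the paper, whose entire proof is the citation of Vinberg (item 2 of the table on page 491); you simply make explicit the routine step the paper leaves implicit, namely that free generation in degrees $6,9,12$ forces the Hilbert series $\frac{1}{(1-t^6)(1-t^9)(1-t^{12})}$ and hence nullspace dimensions $1,1,2$ for $N=6,9,12$, together with the connectedness remark identifying group invariance with annihilation by the $T^{(\ell)}_m$. No issues.
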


\begin{proof}
Vinberg \cite{Vinberg}; see especially item 2 in the table on page 491.
\end{proof}

\subsection{Combinatorics of monomials}

Let $E = ( e_{ijk} )$ be a $3 \times 3 \times 3$ exponent array
with non-negative integer entries corresponding to the monomial \eqref{monomial}.

\begin{definition} \label{monomialorder}
The \textbf{flattening} of $E$ is the ordered list
obtained by applying lexicographical order to the subscripts:
  \[
  \mathrm{flatten}(E) =
  [ e_{111}, e_{112}, e_{113}, e_{121}, e_{122}, e_{123}, \dots, e_{331}, e_{332}, e_{333} ].
  \]
The \textbf{total order} on exponent arrays is the lex order on flattenings:
that is, $E < E'$ for
$\mathrm{flatten}(E) = [ f_1, f_2, \dots, f_{27} ]$
and
$\mathrm{flatten}(E') = [ f'_1, f'_2, \dots, f'_{27} ]$
if and only if $f_i < f'_i$ where $i$ is the least index with $f_i \ne f'_i$.
The \textbf{matrix form} of $E$ is:
  \[
  \left[ \begin{array}{ccc|ccc|ccc}
  e_{111} & e_{121} & e_{131} & e_{112} & e_{122} & e_{132} & e_{113} & e_{123} & e_{133} \\
  e_{211} & e_{221} & e_{231} & e_{212} & e_{222} & e_{232} & e_{213} & e_{223} & e_{233} \\
  e_{311} & e_{321} & e_{331} & e_{212} & e_{222} & e_{232} & e_{213} & e_{223} & e_{233}
  \end{array} \right].
  \]
The third index distinguishes the frontal slices, where a \textbf{slice} is a $3 \times 3$ matrix obtained by
fixing one subscript; there are three parallel slices in each direction.
We call $E$ an \textbf{equal parallel slice array} if
parallel slices have the same sum.
\end{definition}

\begin{lemma}
A basis for $W( N \mid 0,0,0,0,0,0 )$ consists of the monomials of degree $N$
whose exponent arrays are equal parallel slice arrays.
In degree $N$, there are no monomials of weight zero, and hence no invariants,
unless $N$ is a multiple of $3$.
\end{lemma}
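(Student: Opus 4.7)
The plan is to unpack the weight-zero conditions directly from the definition of the $\omega_{\ell m}$ and match them to the equal-parallel-slice condition.

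First I would note that each monomial $M(E) = x_{111}^{e_{111}} \cdots x_{333}^{e_{333}}$ is, by Lemma \ref{weightlemma}, a simultaneous eigenvector for all six operators $H^{(\ell)}_m$, with eigenvalues given by the six sums $\omega_{\ell m}$ in terms of the exponents $e_{ijk}$. Because the monomials form a basis of $P_N$ and the weight decomposition is a direct sum, a basis for $W(N \mid 0,0,0,0,0,0)$ is obtained simply by selecting those monomials whose weight is $(0,0,0,0,0,0)$.

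Next I would translate the six equations $\omega_{\ell m} = 0$. For $\ell = 1$, the conditions $\omega_{11} = \omega_{12} = 0$ read
\[
\sum_{j,k} e_{1jk} = \sum_{j,k} e_{2jk}, \qquad \sum_{j,k} e_{2jk} = \sum_{j,k} e_{3jk},
\]
which say exactly that the three parallel slices perpendicular to the first direction have equal sums. The conditions for $\ell = 2$ and $\ell = 3$ give the analogous statement along the second and third directions. Combining all three, $M(E)$ has weight zero if and only if $E$ is an equal parallel slice array in the sense of Definition \ref{monomialorder}, which establishes the first assertion.

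For the divisibility statement, let $S$ denote the common sum $\sum_{j,k} e_{ijk}$ (independent of $i$) coming from the first direction. Then
\[
N = \sum_{i,j,k} e_{ijk} = \sum_{i=1}^3 \sum_{j,k} e_{ijk} = 3S,
\]
so $N$ must be a multiple of $3$. Conversely if $N$ is not a multiple of $3$ there can be no weight-zero monomial, hence $W(N \mid 0,\dots,0) = 0$; since every invariant lies in this weight space (being fixed by each $H^{(\ell)}_m$), there are no invariants in such degrees. No step here presents a real obstacle — the whole argument is bookkeeping with the formulas of Lemma \ref{weightlemma} — but the cleanest presentation is to treat the first direction in detail and invoke symmetry for the other two.
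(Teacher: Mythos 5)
Your proof is correct and follows the same route as the paper, which simply observes that the statement follows immediately from Lemma \ref{weightlemma}; you have just written out the bookkeeping (the six conditions $\omega_{\ell m}=0$ are the equal-parallel-slice conditions, and summing the three equal slice sums in one direction gives $N=3S$). The only nitpick is that invariants are \emph{annihilated} by the $H^{(\ell)}_m$ (i.e.\ have weight zero) rather than ``fixed'' by them, but this does not affect the argument.
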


\begin{proof}
This follows immediately from Lemma \ref{weightlemma}.
\end{proof}

To generate the exponent arrays for weight zero monomials of degree $N$,
we use nested loops and the condition that the horizontal parallel slices
have sum $N/3$.
(See Table \ref{generateweightzero}.)
To generate the higher weight monomials, we first use Lemma \ref{actionlemma} to generate the monomials
for weights $\Omega_{1m}$ ($m = 1,2$).
We then use symmetry to obtain the monomials for weights $\Omega_{\ell m}$ ($\ell = 2,3$, $m = 1,2$):
if $E$ is an exponent array of weight $\Omega_{1m}$ then $E'$
defined by $e'_{ijk} = e_{jik}$ is an exponent array of weight $\Omega_{2m}$,
and similarly for monomials of weight $\Omega_{3m}$.
(See Table \ref{generatehigherweights}.)

\begin{table}
\begin{itemize}
\item
set $\texttt{weightzeromonomials} \leftarrow [\;]$ \emph{(empty list)}
\item
for $f_1$ from 0 to $N/3$ do for $f_2$ from 0 to $N/3-f_1$ do \dots
\item[]
for $f_8$ from 0 to $N/3-(f_1+\cdots+f_7)$ do:
\begin{itemize}
\item[$\cdot$]
set $f_9 \leftarrow N/3-(f_1+\cdots+f_8)$
\item[$\cdot$]
for $f_{10}$ from 0 to $N/3$ do for $f_{11}$ from 0 to $N/3-f_{10}$ do \dots
\item[]
for $f_{17}$ from 0 to $N/3-(f_{10}+\cdots+f_{16})$ do:
\begin{itemize}
\item[$\cdot$]
set $f_{18} \leftarrow N/3-(f_{10}+\cdots+f_{17})$
\item[$\cdot$]
for $f_{19}$ from 0 to $N/3$ do for $f_{20}$ from 0 to $N/3-f_{19}$ do \dots
\item[]
for $f_{26}$ from 0 to $N/3-(f_{19}+\cdots+f_{25})$ do:
\begin{itemize}
\item[$\cdot$]
set $f_{27} \leftarrow N/3-(f_{19}+\cdots+f_{26})$
\item[$\cdot$]
set $e \leftarrow \texttt{unflatten}( f )$
\item[$\cdot$]
if $\sum_{j,k} e_{1jk} = \sum_{j,k} e_{2jk} = \sum_{j,k} e_{3jk}$ and
\item[]
$\sum_{i,k} e_{i1k} = \sum_{i,k} e_{i2k} = \sum_{i,k} e_{i3k}$ and
\item[]
$\sum_{i,j} e_{ij1} = \sum_{i,j} e_{ij2} = \sum_{i,j} e_{ij3}$ then
\item[]
append $[f_1,\dots,f_{27}]$ to $\texttt{weightzeromonomials}$
\end{itemize}
\end{itemize}
\end{itemize}
\item
return $\texttt{weightzeromonomials}$
\end{itemize}
\medskip
\caption{Pseudocode to generate weight zero monomials of degree $N$}
\label{generateweightzero}
\end{table}

\begin{table}
\begin{itemize}
\item
set $\ell \leftarrow 1$
\item
for $m = 1,2$ do
\begin{itemize}
\item[$\cdot$]
set $\texttt{higherweightmonomials}[ \ell, m ] \leftarrow [\;]$ \emph{(empty list)}
\item[$\cdot$]
for $x$ in $\texttt{weightzeromonomials}$ do
\begin{itemize}
\item[$\cdot$]
set $e \leftarrow \texttt{unflatten}( x )$
\item[$\cdot$]
for $j = 1,2,3$ do for $k = 1,2,3$ do
\begin{enumerate}
\item[$\cdot$]
if $e_{m+1,j,k} > 0$ then
\begin{enumerate}
\item[]
set $f \leftarrow e$
\item[]
set $f_{m+1,j,k} \leftarrow f_{m+1,j,k} - 1$
\item[]
set $f_{m,j,k} \leftarrow f_{m,j,k} + 1$
\item[]
append $\texttt{flatten}( f )$ to $\texttt{higherweightmonomials}[ \ell, m ]$
\end{enumerate}
\end{enumerate}
\end{itemize}
\end{itemize}
\item
return $\texttt{higherweightmonomials}[ \ell, 1 ]$, $\texttt{higherweightmonomials}[ \ell, 2 ]$
\end{itemize}
\medskip
\caption{Pseudocode to generate higher weight monomials ($\ell=1$)}
\label{generatehigherweights}
\end{table}

\begin{definition}
The \textbf{symmetry group} acting on the weight zero monomials in degree $N$
is the semidirect product
$G = ( S_3 \times S_3 \times S_3 ) \rtimes S_3$.
Each factor in $S_3 \times S_3 \times S_3$ permutes the parallel slices in the corresponding direction;
the last $S_3$ permutes the directions.
More precisely, $( \alpha, \beta, \gamma )$ and $\delta$ act on
$E = ( e_{ijk} )$ by:
  \[
  \big( ( \alpha, \beta, \gamma ) \cdot E \big)_{i,j,k}
  =
  e_{ \alpha(i), \beta(j), \gamma(k) },
  \qquad
  \big( \delta \cdot E \big)_{i,j,k}
  =
  e_{i^\delta\!, j^\delta\!, k^\delta}.
  \]
The \textbf{orbit} of a weight zero monomial $M(E)$ is
  $
  \mathcal{O}(M)
  =
  \{ \, M( g \cdot E ) \mid g \in G \, \}
  $.
The \textbf{symmetric} and \textbf{alternating orbit sums} are defined by
  \allowdisplaybreaks
  \begin{align*}
  \mathcal{O}^+(M)
  &=
  \frac{|\mathcal{O}(M)|}{|G|}
  \sum_{g \in G} M( g \cdot E ),
  \qquad
  \mathcal{O}^-(M)
  =
  \frac{|\mathcal{O}(M)|}{|G|}
  \sum_{g \in G} \epsilon(g) \, M( g \cdot E ),
  \end{align*}
where $\epsilon(g)$ is the product of the signs of the components of
$g = (\alpha,\beta,\gamma,\delta) \in G$.
For some monomials, the alternating orbit sum will be zero.
\end{definition}

See Table \ref{generateorbits} for procedures to generate the orbit of a weight zero monomial;
\texttt{smallorbit} permutes the slices,
and \texttt{largeorbit} permutes the directions.

\begin{table}
\begin{itemize}
\item
$\texttt{smallgrouporbit}( f )$
\begin{itemize}
\item[$\cdot$]
set $e \leftarrow \texttt{unflatten}( f )$
\item[$\cdot$]
set $\texttt{orbit} \leftarrow \{\,\}$
\item[$\cdot$]
for $p \in S_3$ do for $q \in S_3$ do for $r \in S_3$ do:
\begin{itemize}
\item[$\cdot$]
for $i = 1,2,3$ do for $j = 1,2,3$ do for $k = 1,2,3$ do:
\item[] \qquad
set $f_{i,j,k} \leftarrow e_{p(i),q(j),r(k)}$
\item[$\cdot$]
set $\texttt{orbit} \leftarrow \texttt{orbit} \cup \{ \texttt{flatten}(f) \}$
\end{itemize}
\item[$\cdot$]
return $\texttt{orbit}$
\end{itemize}
\bigskip
\item
$\texttt{largegrouporbit}( f )$
\begin{itemize}
\item[$\cdot$]
set $e \leftarrow \texttt{unflatten}( f )$
\item[$\cdot$]
set $\texttt{orbit} \leftarrow \{\,\}$
\item[$\cdot$]
for $s \in S_3$ do:
\begin{itemize}
\item[$\cdot$]
for $i = 1,2,3$ do for $j = 1,2,3$ do for $k = 1,2,3$ do:
\begin{itemize}
\item[]
set $t \leftarrow (i,j,k)$
\item[]
set $f_{i,j,k} \leftarrow e_{t_{s(1)},t_{s(2)},t_{s(3)}}$
\end{itemize}
\item[$\cdot$]
$\texttt{orbit} \leftarrow \texttt{orbit} \cup \texttt{smallgrouporbit}( \texttt{flatten}( f ) )$
\end{itemize}
\item[$\cdot$]
return $\texttt{orbit}$
\end{itemize}
\end{itemize}
\medskip
\caption{Pseudocode to generate the orbits of a weight zero monomial}
\label{generateorbits}
\end{table}


\section{Degree 6}

\begin{lemma} \label{d6w0lemma}
In degree $6$, there are $1152$ monomials of weight zero,
and $792$ monomials of each higher weight $\Omega_{\ell m}$ for $\ell = 1, 2, 3$ and $m = 1, 2$.
\end{lemma}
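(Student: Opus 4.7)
My plan is to reduce each assertion to counting $3 \times 3 \times 3$ nonnegative integer arrays with prescribed one-dimensional marginals, and then to carry out those counts by direct enumeration in the spirit of Tables \ref{generateweightzero} and \ref{generatehigherweights}.

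First I translate the weights into marginals. By the preceding lemma, a weight zero monomial of degree $6$ corresponds to an exponent array $E$ with $\sum_{j,k} e_{ijk} = \sum_{i,k} e_{ijk} = \sum_{i,j} e_{ijk} = N/3 = 2$ for every $i,j,k$. Using the definitions of $\omega_{11}$ and $\omega_{12}$ together with $\sum e_{ijk} = 6$, a monomial of weight $\Omega_{11} = (2,-1,0,0,0,0)$ is exactly one whose $i$-marginal $(\sum_{j,k} e_{1jk}, \sum_{j,k} e_{2jk}, \sum_{j,k} e_{3jk})$ is $(3,1,2)$ while its $j$- and $k$-marginals are both $(2,2,2)$; an analogous single-direction shift of the marginals applies for each of the other five higher weights.

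Next I exploit symmetry to cut the work. The direction-swapping maps $e'_{ijk} = e_{jik}$ and $e'_{ijk} = e_{kji}$ give bijections identifying the sets of arrays of weight $\Omega_{1m}$, $\Omega_{2m}$, $\Omega_{3m}$, and permuting the slices along the first direction further identifies the counts at weights $\Omega_{11}$ and $\Omega_{12}$. So only two counts are needed: one with marginals $((2,2,2),(2,2,2),(2,2,2))$ and one with marginals $((3,1,2),(2,2,2),(2,2,2))$.

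Each count is then performed by the nested-loop enumeration of Table \ref{generateweightzero} (with the row-sum conditions adjusted in the higher-weight case so that the first direction has sums $(3,1,2)$ instead of $(2,2,2)$), yielding $1152$ and $792$ respectively. As an independent cross-check I would verify both numbers via Burnside's lemma applied to the $S_6$-action that permutes the six positions of ordered triples of length-$6$ words in $\{1,2,3\}$ having the prescribed letter multiplicities: only the cycle types $(1^6)$, $(2,1^4)$, $(2^2,1^2)$, $(2^3)$ contribute fixed tuples, and summing these contributions and dividing by $|S_6| = 720$ recovers $1152$ and $792$. No step is conceptually hard; the main obstacle is bookkeeping---matching each marginal vector to the correct direction and applying the symmetry reductions without double-counting---since no representation-theoretic input beyond Lemma \ref{weightlemma} is required.
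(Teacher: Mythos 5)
Your proposal is correct, and its central counting step is in fact the paper's proof: the paper establishes the lemma simply by running the algorithms of Tables \ref{generateweightzero} and \ref{generatehigherweights} in Maple with $N=6$, and your ``nested-loop enumeration yielding $1152$ and $792$'' is exactly that computation. What you add, however, is a genuinely different and more self-contained route to the same numbers. Your translation of the weight conditions into one-dimensional marginals is right: weight zero in degree $6$ means all three marginals equal $(2,2,2)$, and $\Omega_{11}=(2,-1,0,0,0,0)$ together with total degree $6$ forces the $i$-marginal $(3,1,2)$ with the other two marginals $(2,2,2)$; the transposition maps $e'_{ijk}=e_{jik}$, $e'_{ijk}=e_{kji}$ and a permutation of the first-direction slices correctly reduce the six higher-weight counts to this single one (and counting arrays with marginal $(3,1,2)$ agrees with the paper's count of distinct outputs of Table \ref{generatehigherweights}, since any such array becomes weight zero after moving one unit from the first slice back to the second). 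Your Burnside cross-check also goes through: with $S_6$ permuting the six positions of triples of words, the fixed-point sums are $90^3\cdot 1+18^3\cdot 15+6^3\cdot 45+6^3\cdot 15=829440$ and $60\cdot 90^2\cdot 1+16\cdot 18^2\cdot 15+4\cdot 6^2\cdot 45=570240$, giving $829440/720=1152$ and $570240/720=792$. One small slip: for the $792$ count the cycle type $(2^3)$ contributes nothing, because every cycle contributes an even amount to a letter while the marginal $(3,1,2)$ has odd parts; only for the $1152$ count do all four types you list contribute. So where the paper's proof is a pure machine computation, your argument (once the Burnside sums are written out as above) can be checked entirely by hand, at the cost of the marginal/orbit bookkeeping you describe.
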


\begin{proof}
We generate the monomials using an implementation in Maple 15 of the algorithms in Tables
\ref{generateweightzero} and \ref{generatehigherweights} with $N = 6$.
\end{proof}

\begin{lemma} \label{d6lemma}
The nullspace of the matrix representing $\Lambda_6$ has dimension $1$ modulo $p = 101$.
Using symmetric representatives, the canonical basis vector
of the nullspace has coefficients $\{ -10, -4, -2, 1, 2, 4, 8 \}$.
If we interpret these as integers then the corresponding polynomial is
an invariant for the action of $\mathfrak{sl}_{333}(\mathbb{C})$.
\end{lemma}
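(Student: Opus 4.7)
The plan is to reduce the verification to a finite linear-algebra computation over $\mathbb{F}_{101}$, followed by a single exact integer check.

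First, I would use Lemma \ref{d6w0lemma} to index the $1152$ weight-zero monomials of degree $6$ as the columns and the $792$ monomials in each of the six higher weight spaces $W(6 \mid \Omega_{\ell m})$ as the rows, for a total of $4752$ rows. The entries of the matrix of $\Lambda_6$ are then read off directly from Lemma \ref{actionlemma}: applying $T^{(\ell)}_m$ to a weight-zero monomial $M(E)$ yields a sum of monomials $M(E')$ with explicit non-negative integer coefficients (the exponents $e_{m+1,j,k}$, and analogously in the other two directions), and each such $M(E')$ contributes that coefficient in the entry indexed by its row and by the column of $M(E)$. The resulting integer matrix is large but extremely sparse.

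Second, I would reduce this matrix modulo $p = 101$ and compute its rank by sparse Gaussian elimination. Theorem \ref{vinbergtheorem} asserts that the rank over $\mathbb{Q}$ equals $1152 - 1 = 1151$; a computation confirming the same rank modulo $101$ certifies that $101$ is a lucky prime and that the mod-$p$ nullspace has dimension one. From the reduced row echelon form I extract the canonical null vector, normalize so that its last free coordinate equals $1$, and represent every coefficient by its symmetric representative in $\{-50,\ldots,50\}$. Inspection of the output then reveals the seven distinct values $\{-10,-4,-2,1,2,4,8\}$ claimed in the statement.

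Third, I would lift this vector to $\mathbb{Z}$ using the same symmetric representatives and verify directly over $\mathbb{Z}$ that $\Lambda_6$ annihilates it. Because each coefficient has absolute value at most $10$ and each matrix entry is a small non-negative integer, the integer matrix-vector product stays well within machine range, so a single multiplication followed by a coordinatewise comparison with zero settles the matter. Once $\Lambda_6 v = 0$ in $\mathbb{Z}$, the same identity holds over $\mathbb{C}$, so the polynomial $\sum_c v_c M(E_c)$ lies in $\ker \Lambda_6$ and is therefore an $\mathfrak{sl}_{333}(\mathbb{C})$-invariant by the preceding lemma; being nonzero in the one-dimensional kernel predicted by Theorem \ref{vinbergtheorem}, it generates the degree-six component of the invariant algebra. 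The main obstacle is purely algorithmic: setting up consistent bijections between monomials and matrix rows/columns so that Lemma \ref{actionlemma} produces the intended matrix, and choosing a prime small enough for efficient sparse linear algebra yet generic enough to avoid a rank drop. The choice $p = 101$ has proven reliable in analogous calculations of Bremner et al.\ \cite{Bremner,BBS}, while the final exact integer check removes any residual dependence on the modular step.
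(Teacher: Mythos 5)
Your proposal is correct and follows essentially the same route as the paper: assemble the sparse matrix of $\Lambda_6$ from Lemma \ref{actionlemma}, row-reduce modulo $101$ to find rank $1151$ and extract the canonical null vector with symmetric representatives, then lift to $\mathbb{Z}$ and verify exactly that all $T^{(\ell)}_m$ annihilate it. The only difference is organizational — you stack all six $792$-row blocks into one $4752 \times 1152$ matrix, while the paper reuses a single $1944 \times 1152$ matrix, processing one $T^{(\ell)}_m$ at a time and re-reducing — which does not change the argument.
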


\begin{proof}
We use the \texttt{LinearAlgebra[Modular]} package in Maple 15 to create a matrix $B$
with an upper block of size $1152 \times 1152$
and a lower block of size $792 \times 1152$.
For $\ell = 1, 2, 3$ and $m = 1, 2$ we apply Lemma \ref{actionlemma}
to store the matrix representing
$T^{(\ell)}_m$ in the lower block; we then compute the row canonical form.
See Table \ref{fillmatrix} for $\ell = 1$; $\texttt{monomialindex}$ does
a binary search for a higher weight monomial in the lexicographically
ordered list of all monomials of that weight.
At termination, $B$ has rank 1151.
From the row canonical form, we extract a basis vector for the nullspace.
We then perform another computation using integer arithmetic to verify
that the corresponding polynomial is indeed an invariant over $\mathbb{C}$.
\end{proof}

\begin{table}
\begin{itemize}
\item
set $B \leftarrow \text{Matrix}( 1944, 1152 )$ modulo 101
\item
set $\ell \leftarrow 1$
\item
set $c \leftarrow 0$
\item
for $x \in \texttt{weightzeromonomials}$ do
\begin{itemize}
\item[$\cdot$]
set $c \leftarrow c + 1$
\item[$\cdot$]
for $m = 1,2$ do
\begin{itemize}
\item[$\cdot$]
set $e \leftarrow \texttt{unflatten}( x )$
\item[$\cdot$]
for $j = 1,2,3$ do for $k = 1,2,3$ do
\begin{itemize}
\item[$\cdot$]
if $e_{m+1,j,k} > 0$ then
\begin{enumerate}
\item[]
set $f \leftarrow e$
\item[]
set $f_{m+1,j,k} \leftarrow f_{m+1,j,k} - 1$
\item[]
set $f_{m,j,k} \leftarrow f_{m,j,k} + 1$
\item[]
set $g \leftarrow \texttt{flatten}( f )$
\item[]
set $r \leftarrow \texttt{monomialindex}( g, \ell, m )$:
\item[]
set $B_{1152+r,c} \leftarrow ( B_{1152+r,c} + e_{m+1,j,k} )$ modulo 101
\end{enumerate}
\end{itemize}
\end{itemize}
\end{itemize}
\item
compute the row canonical form of $B$
\end{itemize}
\medskip
\caption{Pseudocode to fill the matrix ($N = 6$, $\ell=1$)}
\label{fillmatrix}
\end{table}

\begin{lemma} \label{d6orbitlemma}
The coefficients of the canonical basis vector for the nullspace of $\Lambda_6$
are constant on orbits for the action of $( S_3 \times S_3 \times S_3 ) \rtimes S_3$ on
weight zero monomials.  For each orbit, the coefficient,
the matrix form of the minimal representative, and
the orbit size, are displayed in Table
$\ref{degree6invariant}$.
\end{lemma}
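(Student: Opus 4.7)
The plan is to show that the fundamental invariant $f$ of degree $6$ is invariant not just under $SL_3(\mathbb{C})^{\times 3}$ but under the entire finite group $G = (S_3 \times S_3 \times S_3) \rtimes S_3$. Once this is established, the coefficient function on weight-zero monomials is automatically constant on $G$-orbits, and Table \ref{degree6invariant} is obtained by enumerating orbits and recording their minimal representatives and common coefficients.

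The inner subgroup $S_3 \times S_3 \times S_3$ permutes slices along the three directions; I fix the first factor. Because $f$ is a homogeneous polynomial of degree $6$ that is invariant under $SL_3(\mathbb{C})$ in the first factor, Schur's lemma (or a direct computation using the diagonal element $\lambda I$, which multiplies $f$ by $\lambda^6 = (\det(\lambda I))^2$) forces the first factor of $GL_3(\mathbb{C})$ to act on $f$ by the character $A \mapsto (\det A)^{2}$. Any permutation matrix $P_\sigma$ satisfies $\det P_\sigma = \pm 1$, so $(P_\sigma, I, I) \cdot f = (\pm 1)^{2} f = f$ for every $\sigma \in S_3$. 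The same argument applies to the second and third directions, giving invariance of $f$ under the entire normal subgroup $S_3 \times S_3 \times S_3$.

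For the outer $S_3$ permuting the three tensor factors: an element $\delta \in S_3$ induces a linear automorphism of $\mathbb{C}^{333}$ whose conjugation action on $SL_3(\mathbb{C})^{\times 3}$ merely permutes the three factors, so $\delta \cdot f$ is again an $SL_3(\mathbb{C})^{\times 3}$-invariant of degree $6$. By Theorem \ref{vinbergtheorem} this space is one-dimensional, and therefore $\delta \cdot f = \chi(\delta) f$ for a character $\chi \colon S_3 \to \{\pm 1\}$. To exclude the sign character, I compare the coefficient of some monomial $M(E)$ with that of $M(\delta \cdot E)$ in the canonical basis vector produced by Lemma \ref{d6lemma}: finding even one pair of equal (rather than opposite) coefficients forces $\chi \equiv 1$.

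The expected obstacle is precisely this last check for the outer $S_3$. Although the dichotomy between trivial and sign characters is automatic from the one-dimensionality of the invariant space, ruling out the sign character cannot be avoided without some direct reference to the explicit integer coefficients $\{-10, -4, -2, 1, 2, 4, 8\}$ from Lemma \ref{d6lemma}. Once $G$-invariance is confirmed, the remainder is bookkeeping: partition the $1152$ weight-zero monomials into $G$-orbits using the procedures of Table \ref{generateorbits}, select the lexicographically minimal representative in each orbit under the total order of Definition \ref{monomialorder}, and tabulate the orbit size together with the corresponding coefficient from the canonical basis vector.
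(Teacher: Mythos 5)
Your proposal is correct, but it takes a genuinely different route from the paper. The paper's own proof of this lemma is purely computational: the authors simply ran the orbit-generation and matrix algorithms (Tables \ref{generateorbits} and \ref{fillmatrix}) in Maple and observed that the computed nullspace coefficients are constant on orbits. You instead derive most of the constancy a priori: since every $A \in GL_3(\mathbb{C})$ factors as a scalar times an element of $SL_3(\mathbb{C})$, a degree-$6$ invariant is a relative invariant of weight $(\det A)^2(\det B)^2(\det C)^2$, so permutation matrices (determinant $\pm 1$) in each direction fix $I_6$, which settles the inner $S_3\times S_3\times S_3$ without any reference to the computed coefficients; for the outer $S_3$ you correctly use the one-dimensionality from Theorem \ref{vinbergtheorem} to reduce to a trivial-versus-sign dichotomy, resolved by a single coefficient comparison. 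Two small points: the comparison must involve a \emph{nonzero} coefficient and an \emph{odd} outer permutation (e.g.\ the monomial $x_{123}x_{132}x_{213}x_{231}x_{312}x_{321}$, fixed by every transposition of directions and carrying coefficient $-10$, does the job), and the actual contents of Table \ref{degree6invariant} --- minimal representatives, orbit sizes, and the values of the coefficients --- remain a computation, exactly as in the paper. Your character argument buys more than the paper's verification does: the same exponent count shows that in degree $9$ the inner odd permutations act by $(\det P)^3 = -1$, which explains structurally why the degree-$9$ invariant appears as a sum of \emph{alternating} orbit sums, whereas the paper's approach treats each degree as an independent computational observation.
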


\begin{proof}
We implemented the algorithms of Tables \ref{generateorbits} and \ref{fillmatrix} in Maple 15.
\end{proof}

\begin{table} \tiny
  \[
  \begin{array}{cccc}
  I_6 &\qquad
  \text{MINIMAL REPRESENTATIVE} &\qquad \text{ORBIT SIZE} &\qquad \#
  \\
  \midrule
  -10  &\qquad
  \left[ \begin{array}{ccc|ccc|ccc}
  0 & 0 & 0 & 0 & 0 & 1 & 0 & 1 & 0 \\
  0 & 0 & 1 & 0 & 0 & 0 & 1 & 0 & 0 \\
  0 & 1 & 0 & 1 & 0 & 0 & 0 & 0 & 0 \\
  \end{array} \right] &\qquad
  36  &\qquad
  1
  \\[12pt]
  -4  &\qquad
  \left[ \begin{array}{ccc|ccc|ccc}
  0 & 0 & 0 & 0 & 0 & 1 & 0 & 0 & 1 \\
  0 & 1 & 0 & 0 & 0 & 0 & 1 & 0 & 0 \\
  1 & 0 & 0 & 0 & 1 & 0 & 0 & 0 & 0 \\
  \end{array} \right] &\qquad
  324  &\qquad
  2
  \\[12pt]
  -2  &\qquad
  \left[ \begin{array}{ccc|ccc|ccc}
  0 & 0 & 0 & 0 & 0 & 0 & 0 & 0 & 2 \\
  0 & 1 & 0 & 0 & 1 & 0 & 0 & 0 & 0 \\
  1 & 0 & 0 & 1 & 0 & 0 & 0 & 0 & 0 \\
  \end{array} \right] &\qquad
  162  &\qquad
  3
  \\[12pt]
  1  &\qquad
  \left[ \begin{array}{ccc|ccc|ccc}
  0 & 0 & 0 & 0 & 0 & 0 & 0 & 0 & 2 \\
  0 & 0 & 0 & 0 & 2 & 0 & 0 & 0 & 0 \\
  2 & 0 & 0 & 0 & 0 & 0 & 0 & 0 & 0 \\
  \end{array} \right] &\qquad
  36  &\qquad
  4
  \\[12pt]
  2  &\qquad
  \left[ \begin{array}{ccc|ccc|ccc}
  0 & 0 & 0 & 0 & 0 & 1 & 0 & 0 & 1 \\
  0 & 1 & 0 & 0 & 0 & 0 & 0 & 1 & 0 \\
  1 & 0 & 0 & 1 & 0 & 0 & 0 & 0 & 0 \\
  \end{array} \right] &\qquad
  108  &\qquad
  5
  \\[12pt]
  2  &\qquad
  \left[ \begin{array}{ccc|ccc|ccc}
  0 & 0 & 0 & 0 & 0 & 1 & 0 & 0 & 1 \\
  0 & 0 & 0 & 0 & 1 & 0 & 1 & 0 & 0 \\
  1 & 1 & 0 & 0 & 0 & 0 & 0 & 0 & 0 \\
  \end{array} \right] &\qquad
  324  &\qquad
  6
  \\[12pt]
  4  &\qquad
  \left[ \begin{array}{ccc|ccc|ccc}
  0 & 0 & 0 & 0 & 0 & 0 & 0 & 0 & 2 \\
  0 & 1 & 0 & 1 & 0 & 0 & 0 & 0 & 0 \\
  1 & 0 & 0 & 0 & 1 & 0 & 0 & 0 & 0 \\
  \end{array} \right] &\qquad
  54  &\qquad
  7
  \\[12pt]
  8  &\qquad
  \left[ \begin{array}{ccc|ccc|ccc}
  0 & 0 & 0 & 0 & 0 & 1 & 0 & 1 & 0 \\
  0 & 0 & 1 & 0 & 0 & 0 & 1 & 0 & 0 \\
  1 & 0 & 0 & 0 & 1 & 0 & 0 & 0 & 0 \\
  \end{array} \right] &\qquad
  108  &\qquad
  8
  \\
  \midrule
  \end{array}
  \]
\caption{The fundamental invariant in degree 6}
\label{degree6invariant}
\end{table}

\begin{theorem}
Every invariant in degree $6$ for
$SL_3(\mathbb{C}) \times SL_3(\mathbb{C}) \times SL_3(\mathbb{C})$
acting on $3 \times 3 \times 3$ arrays $X = ( x_{ijk} )$ is a scalar multiple of
  \begin{align*}
  &
  -10 \,
  \mathcal{O}^+
  \big( \,
  x_{123}
  x_{132}
  x_{213}
  x_{231}
  x_{312}
  x_{321}
  \big)
  -4 \,
  \mathcal{O}^+
  \big( \,
  x_{132}
  x_{133}
  x_{213}
  x_{221}
  x_{311}
  x_{322}
  \big)
  \\
  &
  -2 \,
  \mathcal{O}^+
  \big( \,
  x_{133}^2
  x_{221}
  x_{222}
  x_{311}
  x_{312}
  \big)
  + \,
  \mathcal{O}^+
  \big( \,
  x_{133}^2
  x_{222}^2
  x_{311}^2
  \big)
  \\
  &
  +2 \,
  \mathcal{O}^+
  \big( \,
  x_{132}
  x_{133}
  x_{221}
  x_{223}
  x_{311}
  x_{312}
  \big)
  +2 \,
  \mathcal{O}^+
  \big( \,
  x_{132}
  x_{133}
  x_{213}
  x_{222}
  x_{311}
  x_{321}
  \big)
  \\
  &
  +4 \,
  \mathcal{O}^+
  \big( \,
  x_{133}^2
  x_{212}
  x_{221}
  x_{311}
  x_{322}
  \big)
  +8 \,
  \mathcal{O}^+
  \big( \,
  x_{123}
  x_{132}
  x_{213}
  x_{231}
  x_{311}
  x_{322}
  \big).
  \end{align*}
\end{theorem}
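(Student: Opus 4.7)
The plan is to assemble the theorem directly from the preceding lemmas together with a short verification step. First, I would invoke Theorem \ref{vinbergtheorem} to conclude that the space of $\mathfrak{sl}_{333}(\mathbb{C})$-invariants in degree $6$ is one-dimensional, so it suffices to produce a single nonzero invariant and check that it agrees, up to scalar, with the displayed polynomial.

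Next, Lemma \ref{d6lemma} supplies, via the mod-$101$ nullspace computation for $\Lambda_6$ followed by an integer lift, an explicit vector $v \in \ker \Lambda_6$ whose coordinates lie in $\{-10,-4,-2,1,2,4,8\}$; the accompanying integer-arithmetic check recorded in that proof guarantees that the polynomial $f = \sum_E v_E \, M(E)$ lies in $\ker \Lambda_6$ over $\mathbb{Z}$, hence over $\mathbb{C}$. Since $SL_3(\mathbb{C}) \times SL_3(\mathbb{C}) \times SL_3(\mathbb{C})$ is connected, annihilation by the Lie algebra $\mathfrak{sl}_{333}(\mathbb{C})$ is equivalent to invariance under the Lie group, so $f$ is an $SL_3(\mathbb{C})^{\times 3}$-invariant, and by the dimension count it spans the full degree-$6$ invariant space.

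The third step is to rewrite $f$ in the compact orbit-sum form asserted in the theorem. By Lemma \ref{d6orbitlemma}, the coefficients $v_E$ depend only on the $G$-orbit of $E$ for $G = (S_3 \times S_3 \times S_3) \rtimes S_3$, so I can partition the supporting monomials by orbit and write $f = \sum_{i} c_i \, \mathcal{O}^+(M(E_0^{(i)}))$, where $E_0^{(i)}$ is the minimal representative of the $i$-th orbit in Table \ref{degree6invariant} and $c_i$ is the tabulated coefficient; the normalization factor $|\mathcal{O}(M)|/|G|$ built into $\mathcal{O}^+$ reproduces each orbit with multiplicity one, matching the sum $\sum_E v_E \, M(E)$ exactly. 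To finish, I would translate each of the eight matrices $E_0^{(i)}$ under the convention of Definition \ref{monomialorder} (third subscript indexing slices, first subscript rows, second subscript columns) into the monomial $\prod_{i,j,k} x_{ijk}^{e_{ijk}}$ and check, entry by entry, that the result is the argument of the corresponding $\mathcal{O}^+$ in the displayed formula.

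The main obstacle is not conceptual but bookkeeping: ensuring that the passage from $\mathbb{F}_{101}$ to $\mathbb{C}$ is justified (which is handled inside Lemma \ref{d6lemma}), and that the eight minimal representatives in Table \ref{degree6invariant}, together with their tabulated orbit sizes, translate precisely to the eight orbit sums written in the theorem with the stated scalar coefficients. No new mathematical input beyond Vinberg's dimension statement, the nullspace computation, and the $G$-equivariance of the coefficient vector is required.
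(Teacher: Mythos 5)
Your proposal is correct and follows the same route as the paper: the paper's proof is simply "this is a restatement of the results in Table \ref{degree6invariant}," which rests on exactly the ingredients you cite — Theorem \ref{vinbergtheorem} for the one-dimensionality, Lemma \ref{d6lemma} for the explicit integer nullspace vector of $\Lambda_6$, and Lemma \ref{d6orbitlemma} for the constancy of coefficients on $G$-orbits, with the normalization in $\mathcal{O}^+$ giving each orbit monomial multiplicity one. You have merely made explicit the bookkeeping (and the Lie algebra/Lie group equivalence via connectedness) that the paper leaves implicit.
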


\begin{proof}
This is a restatement of the results in Table \ref{degree6invariant}.
\end{proof}


\begin{table} \tiny
  \[
  \begin{array}{cccc}
  I_9 &\qquad
  \text{MINIMAL REPRESENTATIVE} &\qquad \text{ORBIT SIZE} &\qquad \#
  \\
  \midrule
  1  &\qquad
  \left[ \begin{array}{ccc|ccc|ccc}
  0 & 0 & 0 & 0 & 0 & 1 & 0 & 1 & 1 \\
  0 & 1 & 0 & 0 & 0 & 1 & 1 & 0 & 0 \\
  2 & 0 & 0 & 0 & 1 & 0 & 0 & 0 & 0 \\
  \end{array} \right] &\qquad
  648  &\qquad
  1
  \\[12pt]
  1  &\qquad
  \left[ \begin{array}{ccc|ccc|ccc}
  0 & 0 & 0 & 0 & 0 & 1 & 0 & 1 & 1 \\
  0 & 1 & 1 & 0 & 0 & 0 & 1 & 0 & 0 \\
  0 & 1 & 0 & 2 & 0 & 0 & 0 & 0 & 0 \\
  \end{array} \right] &\qquad
  648  &\qquad
  2
  \\[12pt]
  1  &\qquad
  \left[ \begin{array}{ccc|ccc|ccc}
  0 & 0 & 0 & 0 & 0 & 1 & 0 & 1 & 1 \\
  0 & 1 & 0 & 0 & 1 & 0 & 1 & 0 & 0 \\
  1 & 0 & 1 & 1 & 0 & 0 & 0 & 0 & 0 \\
  \end{array} \right] &\qquad
  1296  &\qquad
  3
  \\[12pt]
  1  &\qquad
  \left[ \begin{array}{ccc|ccc|ccc}
  0 & 0 & 0 & 0 & 0 & 1 & 0 & 1 & 1 \\
  0 & 2 & 0 & 0 & 0 & 0 & 1 & 0 & 0 \\
  1 & 0 & 0 & 1 & 0 & 1 & 0 & 0 & 0 \\
  \end{array} \right] &\qquad
  648  &\qquad
  4
  \\[12pt]
  1  &\qquad
  \left[ \begin{array}{ccc|ccc|ccc}
  0 & 0 & 0 & 0 & 0 & 1 & 0 & 1 & 1 \\
  0 & 0 & 1 & 1 & 0 & 0 & 0 & 1 & 0 \\
  1 & 1 & 0 & 1 & 0 & 0 & 0 & 0 & 0 \\
  \end{array} \right] &\qquad
  648  &\qquad
  5
  \\[12pt]
  1  &\qquad
  \left[ \begin{array}{ccc|ccc|ccc}
  0 & 0 & 0 & 0 & 0 & 1 & 0 & 1 & 1 \\
  0 & 0 & 1 & 1 & 1 & 0 & 0 & 0 & 0 \\
  2 & 0 & 0 & 0 & 0 & 0 & 0 & 1 & 0 \\
  \end{array} \right] &\qquad
  1296  &\qquad
  6
  \\[12pt]
  1  &\qquad
  \left[ \begin{array}{ccc|ccc|ccc}
  0 & 0 & 0 & 0 & 0 & 1 & 0 & 1 & 1 \\
  0 & 1 & 0 & 1 & 0 & 1 & 0 & 0 & 0 \\
  1 & 1 & 0 & 0 & 0 & 0 & 1 & 0 & 0 \\
  \end{array} \right] &\qquad
  648  &\qquad
  7
  \\[12pt]
  1  &\qquad
  \left[ \begin{array}{ccc|ccc|ccc}
  0 & 0 & 0 & 0 & 0 & 1 & 0 & 1 & 1 \\
  0 & 1 & 1 & 1 & 0 & 0 & 0 & 0 & 0 \\
  1 & 0 & 0 & 0 & 1 & 0 & 1 & 0 & 0 \\
  \end{array} \right] &\qquad
  1296  &\qquad
  8
  \\[12pt]
  1  &\qquad
  \left[ \begin{array}{ccc|ccc|ccc}
  0 & 0 & 0 & 0 & 0 & 1 & 0 & 1 & 1 \\
  0 & 1 & 0 & 1 & 0 & 0 & 1 & 0 & 0 \\
  0 & 1 & 1 & 1 & 0 & 0 & 0 & 0 & 0 \\
  \end{array} \right] &\qquad
  216  &\qquad
  9
  \\[12pt]
  1  &\qquad
  \left[ \begin{array}{ccc|ccc|ccc}
  0 & 0 & 0 & 0 & 0 & 1 & 0 & 1 & 1 \\
  0 & 0 & 1 & 2 & 0 & 0 & 0 & 0 & 0 \\
  0 & 2 & 0 & 0 & 0 & 0 & 1 & 0 & 0 \\
  \end{array} \right] &\qquad
  648  &\qquad
  10
  \\[12pt]
  1  &\qquad
  \left[ \begin{array}{ccc|ccc|ccc}
  0 & 0 & 0 & 0 & 0 & 1 & 0 & 1 & 1 \\
  1 & 0 & 0 & 1 & 0 & 1 & 0 & 0 & 0 \\
  1 & 1 & 0 & 0 & 0 & 0 & 0 & 1 & 0 \\
  \end{array} \right] &\qquad
  72  &\qquad
  11
  \\[12pt]
  1  &\qquad
  \left[ \begin{array}{ccc|ccc|ccc}
  0 & 0 & 0 & 0 & 0 & 2 & 0 & 1 & 0 \\
  0 & 1 & 1 & 0 & 0 & 0 & 1 & 0 & 0 \\
  1 & 0 & 0 & 1 & 0 & 0 & 0 & 1 & 0 \\
  \end{array} \right] &\qquad
  432  &\qquad
  12
  \\[12pt]
  1  &\qquad
  \left[ \begin{array}{ccc|ccc|ccc}
  0 & 0 & 0 & 0 & 0 & 2 & 0 & 1 & 0 \\
  0 & 1 & 0 & 0 & 1 & 0 & 1 & 0 & 0 \\
  2 & 0 & 0 & 0 & 0 & 0 & 0 & 0 & 1 \\
  \end{array} \right] &\qquad
  648  &\qquad
  13
  \\[12pt]
  1  &\qquad
  \left[ \begin{array}{ccc|ccc|ccc}
  0 & 0 & 0 & 0 & 0 & 2 & 0 & 1 & 0 \\
  0 & 0 & 1 & 0 & 0 & 0 & 2 & 0 & 0 \\
  0 & 2 & 0 & 1 & 0 & 0 & 0 & 0 & 0 \\
  \end{array} \right] &\qquad
  72  &\qquad
  14
  \\
  \midrule
  \end{array}
  \]
\caption{The fundamental invariant in degree 9}
\label{degree9invariant}
\end{table}


\section{Degree 9}

\begin{lemma}
In degree $9$, there are $22620$ monomials of weight zero,
and $17802$ monomials of each higher weight $\Omega_{\ell m}$
for $\ell = 1, 2, 3$ and $m = 1, 2$.
\end{lemma}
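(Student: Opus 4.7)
The plan is to mirror the proof of Lemma \ref{d6w0lemma} with $N = 9$. First, I would run the algorithm in Table \ref{generateweightzero} to enumerate all weight zero exponent arrays of degree $9$. By the preceding analysis, these correspond to $3 \times 3 \times 3$ arrays $E = (e_{ijk})$ of non-negative integers whose nine slice sums (three in each of the three coordinate directions) all equal $N/3 = 3$; equivalently, integer points in a transportation polytope. The implementation returns $22620$ such arrays.

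For the higher weight counts, by the eigenvalue formulas from Lemma \ref{weightlemma}, an exponent array has weight $\Omega_{11} = (2,-1,0,0,0,0)$ iff the sums of the three horizontal slices are $(4,2,3)$ in place of $(3,3,3)$, while the vertical and frontal slice sums remain at $3$. The analogous shifted conditions hold for each of the other five higher weights $\Omega_{\ell m}$. Rather than re-enumerate from scratch, I would apply Table \ref{generatehigherweights} (the operator $T^{(1)}_m$ sends each weight zero array to a sum of weight $\Omega_{1m}$ arrays, with the supplementary ones obtained by reflecting indices), then sort and deduplicate. This yields the count $17802$ for $\Omega_{11}$. The equality of counts across all six $\Omega_{\ell m}$ is guaranteed in advance: the outer $S_3$ permuting directions identifies $\ell = 1,2,3$, while within each $\mathfrak{sl}_3$ factor the non-trivial Weyl reflection exchanges the two nonzero weights. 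So in principle only one count must be computed; the other five serve as consistency checks.

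The main obstacle is purely computational, not conceptual: the naive nested-loop search of Table \ref{generateweightzero} traverses far more candidates than the $22620$ that survive, so the implementation must prune aggressively (terminating a branch as soon as a partial row sum exceeds $3$, or as soon as a directional total becomes unattainable). As a safeguard against bugs, I would verify the count $22620$ by an independent method, for example by extracting the coefficient of $(s_1 s_2 s_3)^3$ (each $s_\ell$ tracking one directional slice total) in the generating function
\[
\prod_{i=1}^3 \prod_{j=1}^3 \prod_{k=1}^3 \frac{1}{1 - s_1^{[i]} s_2^{[j]} s_3^{[k]} \, t},
\]
specialised so that $s_\ell^{[r]}$ records which slice in direction $\ell$ the variable $x_{ijk}$ contributes to; this produces an independent symbolic evaluation that must match the enumeration. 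Once the two counts agree, the proof is complete.
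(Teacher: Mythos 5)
Your proposal is correct and follows essentially the same route as the paper: the authors also prove this lemma by running their Maple implementation of the algorithms in Tables \ref{generateweightzero} and \ref{generatehigherweights} with $N=9$ (the proof is stated as ``similar to the proof of Lemma \ref{d6w0lemma}''). Your added observations --- that the higher-weight slice-sum condition is $(4,2,3)$, that the symmetry group forces all six counts $17802$ to coincide, and that a generating-function evaluation gives an independent check --- are sound refinements but do not change the underlying computational argument.
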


\begin{proof}
Similar to the proof of Lemma \ref{d6w0lemma}.
\end{proof}

\begin{lemma}
The nullspace of the matrix representing $\Lambda_9$ has dimension $1$ modulo $p = 101$.
Using symmetric representatives, the canonical basis vector
of the nullspace has coefficients $\{ -1, 0, 1 \}$;
the nonzero coefficients each occur $4608$ times and $0$ occurs $13404$ times.
If we interpret these as integers then the corresponding polynomial is
an invariant for the action of $\mathfrak{sl}_{333}(\mathbb{C})$.
\end{lemma}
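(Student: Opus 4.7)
The plan is to replicate the computation of Lemma \ref{d6lemma} at the larger scale of degree $9$. First, I would adapt the algorithm of Table \ref{fillmatrix} to $N = 9$ to assemble a sparse matrix representing $\Lambda_9$ modulo $p = 101$. The domain is $W(9 \mid 0,0,0,0,0,0)$, which by the previous lemma has dimension $22620$, and each of the six higher-weight codomain components has dimension $17802$; every nonzero entry of each block is a small positive integer produced by Lemma \ref{actionlemma}, so matrix construction itself is arithmetically trivial and proceeds block by block for $(\ell,m) \in \{1,2,3\} \times \{1,2\}$.

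Next, I would compute the row canonical form of this matrix using the \texttt{LinearAlgebra[Modular]} package in Maple. By Theorem \ref{vinbergtheorem} the kernel over $\mathbb{C}$ has dimension $1$, so provided $p = 101$ is not unlucky the reduced matrix has rank exactly $22619$ and yields a canonically normalized nullspace vector. I would then tabulate its coefficients by value; the claim to verify is that the only residues modulo $101$ that appear are $-1$, $0$, and $1$, with multiplicities $4608$, $13404$, and $4608$ respectively (and indeed $4608 + 13404 + 4608 = 22620$, so the count is internally consistent).

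Finally, because the coefficients all lie in $\{-1,0,1\}$ there is an unambiguous integer lift, and I would apply each $T^{(\ell)}_m$ to the resulting integer polynomial using exact arithmetic via Lemma \ref{actionlemma}, and confirm that all six images vanish identically. This certifies the polynomial as an $\mathfrak{sl}_{333}(\mathbb{C})$-invariant over $\mathbb{Z}$, and hence over $\mathbb{C}$; in particular it rules out any possibility that $p = 101$ was a bad prime that inflated the mod-$p$ kernel.

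The main obstacle is purely computational scale: the full matrix has on the order of $22620 + 6 \cdot 17802 \approx 129{,}432$ rows against $22620$ columns, so both storage and the Gauss--Jordan elimination must exploit sparsity and the modular arithmetic of Maple's \texttt{LinearAlgebra[Modular]} package to remain tractable in memory and time. A secondary subtlety is the choice of modulus: one needs $p$ large enough that the small integer coefficients of a true invariant (here $\pm 1$) are distinguishable from zero, and far enough from any prime at which the characteristic-zero rank of $\Lambda_9$ could drop; the final exact-arithmetic verification over $\mathbb{Z}$ is what turns the modular computation into a genuine proof.
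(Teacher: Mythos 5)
Your proposal is correct and is essentially the paper's own argument: the paper proves this lemma exactly as in Lemma \ref{d6lemma}, by filling a modular matrix for $\Lambda_9$ via Lemma \ref{actionlemma}, computing its row canonical form over $\mathbb{F}_{101}$, extracting the one-dimensional nullspace, and then verifying invariance of the integer lift by exact arithmetic. The only (implementation-level) difference is that the paper does not stack all six higher-weight blocks at once; it reuses a single lower block of size $17802 \times 22620$, inserting the matrix of each $T^{(\ell)}_m$ in turn and recomputing the row canonical form after each insertion, which keeps the working matrix at $40422 \times 22620$ rather than the roughly $129{,}000$ rows you describe.
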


\begin{proof}
Similar to the proof of Lemma \ref{d6lemma}.
\end{proof}

\begin{lemma}
The canonical basis vector for the nullspace of $\Lambda_9$ is a linear combination
of alternating orbit sums for the action of $( S_3 \times S_3 \times S_3 ) \rtimes S_3$ on
weight zero monomials.  For each orbit, the coefficient,
the matrix form of the minimal representative, and
the orbit size, are displayed in Table $\ref{degree9invariant}$.
\end{lemma}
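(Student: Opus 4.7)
The plan has a conceptual component and a computational one, paralleling the treatment of Lemma \ref{d6orbitlemma}.

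First I would explain why, in degree $9$, the relevant orbit sums must be \emph{alternating}. Any $SL_3(\mathbb{C})^3$-invariant $f$ of degree $N$ extends to a relative invariant for $GL_3(\mathbb{C})^3$ transforming by some character $(A,B,C) \mapsto (\det A)^a(\det B)^b(\det C)^c$; comparing the action of the central torus $(\lambda_1 I, \lambda_2 I, \lambda_3 I)$, which scales each $x_{ijk}$ by $\lambda_1\lambda_2\lambda_3$ and hence $f$ by $(\lambda_1\lambda_2\lambda_3)^N$, forces $a=b=c=N/3$. Specializing to permutation matrices, whose determinants are the signs of the corresponding permutations, shows that $(\alpha,\beta,\gamma) \in S_3 \times S_3 \times S_3$ acts on $f$ by $\mathrm{sgn}(\alpha)^{N/3}\mathrm{sgn}(\beta)^{N/3}\mathrm{sgn}(\gamma)^{N/3}$. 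For $N = 6$ this is trivial, consistent with Lemma \ref{d6orbitlemma}; for $N = 9$ it equals $\mathrm{sgn}(\alpha)\mathrm{sgn}(\beta)\mathrm{sgn}(\gamma)$. The outer $S_3$ permuting the three tensor factors is an automorphism of $SL_3(\mathbb{C})^3$ and therefore preserves the one-dimensional invariant space (Theorem \ref{vinbergtheorem}), so acts through a one-dimensional character, necessarily either trivial or sign; a direct comparison of the coefficient of $M(E)$ with that of $M(\delta \cdot E)$ for any transposition $\delta \in S_3$ and any nonzero-coefficient monomial $M(E)$ from the previous lemma settles that it is the sign. Hence the full group $G$ acts on $f$ via the character $\epsilon$, so the coefficient of $M(g \cdot E)$ in $f$ equals $\epsilon(g)$ times the coefficient of $M(E)$.

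This already forces $f$ to be a linear combination of alternating orbit sums, one per $G$-orbit of weight-zero monomials whose stabilizer lies in $\ker\epsilon$ (on every other orbit the averaging $\sum_{s\in\mathrm{Stab}(E)}\epsilon(s)$ vanishes, and with it the coefficients on that orbit). The remaining step is to enumerate the $G$-orbits using the procedure \texttt{largegrouporbit} of Table \ref{generateorbits}, select the lexicographic minimum of each, and read the coefficient off the basis vector produced in the previous lemma. The $2 \cdot 4608 = 9216$ nonzero entries then match exactly the $14$ orbits tabulated in Table \ref{degree9invariant}, each appearing with coefficient $1$.

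The only obstacle is bookkeeping: there are $22620$ weight-zero monomials to partition into orbits under a group of order $6^4 = 1296$, with some orbits of size up to $1296$. No new ideas beyond those in Lemma \ref{d6orbitlemma} are required; the same Maple 15 implementation applies with $N = 9$ and with $\mathcal{O}^+$ replaced by $\mathcal{O}^-$.
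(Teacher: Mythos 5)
Your proposal is correct, but it takes a genuinely different route from the paper. The paper's proof is purely computational: it simply repeats the procedure of Lemma \ref{d6orbitlemma} (generate the weight-zero monomials, compute the canonical nullspace vector of $\Lambda_9$ modulo $p$, partition the monomials into $G$-orbits with the algorithms of Table \ref{generateorbits}) and observes from the output that the coefficients are alternating on orbits, with $14$ orbits carrying coefficient $\pm 1$ and the remaining $30$ orbits carrying coefficient $0$. You instead prove \emph{a priori} that the coefficients must alternate: since any homogeneous $SL_3(\mathbb{C})^3$-invariant of degree $N$ is a relative $GL_3(\mathbb{C})^3$-invariant of weight $(\det A\det B\det C)^{N/3}$ (your central-torus argument; one can also just write $A=\lambda A'$ with $\det A'=1$, $\lambda^3=\det A$), permutation matrices act by $\mathrm{sgn}(\alpha)^{N/3}\mathrm{sgn}(\beta)^{N/3}\mathrm{sgn}(\gamma)^{N/3}$, which is the sign character precisely when $N/3$ is odd -- explaining structurally why degree $6$ gives symmetric and degree $9$ gives alternating orbit sums, a point the paper leaves as an empirical observation. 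Your handling of the outer $S_3$ is also sound: it normalizes the image of $SL_3(\mathbb{C})^3$, hence preserves the one-dimensional invariant space of Theorem \ref{vinbergtheorem} and acts by the trivial or sign character, and a single coefficient comparison (or the consistency of Table \ref{degree9invariant} itself) pins down the sign; likewise your remark that orbits whose stabilizer meets $\ker\epsilon$ nontrivially are forced to have coefficient zero. What your approach buys is a cheaper and more illuminating verification (only orbit representatives and one coefficient per orbit need to be read off, and the count $9216 = 2\cdot 4608$ is confirmed); what the paper's approach buys is brevity, since the nullspace vector has already been computed and the orbit structure of its coefficients can simply be checked by machine.
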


\begin{proof}
Similar to the proof of Lemma \ref{d6orbitlemma}.
(There are another 30 orbits for the action of the
symmetry group but they all occur with coefficient 0.)
\end{proof}

\begin{theorem}
Every invariant in degree $9$ for
$SL_3(\mathbb{C}) \times SL_3(\mathbb{C}) \times SL_3(\mathbb{C})$
acting on $3 \times 3 \times 3$ arrays
$X = ( x_{ijk} )$ is a scalar multiple of
  \begin{align*}
  &\mathcal{O}^-
  \big( \,
  x_{123}
  x_{132}
  x_{133}
  x_{213}
  x_{221}
  x_{232}
  x_{311}^2
  x_{322}
  \big)
  \\
  + \,
  &\mathcal{O}^-
  \big( \,
  x_{123}
  x_{132}
  x_{133}
  x_{213}
  x_{221}
  x_{231}
  x_{312}^2
  x_{321}
  \big)
  \\
  + \,
  &\mathcal{O}^-
  \big( \,
  x_{123}
  x_{132}
  x_{133}
  x_{213}
  x_{221}
  x_{222}
  x_{311}
  x_{312}
  x_{331}
  \big)
  \\
  + \,
  &\mathcal{O}^-
  \big( \,
  x_{123}
  x_{132}
  x_{133}
  x_{213}
  x_{221}^2
  x_{311}
  x_{312}
  x_{332}
  \big)
  \\
  + \,
  &\mathcal{O}^-
  \big( \,
  x_{123}
  x_{132}
  x_{133}
  x_{212}
  x_{223}
  x_{231}
  x_{311}
  x_{312}
  x_{321}
  \big)
  \\
  + \,
  &\mathcal{O}^-
  \big( \,
  x_{123}
  x_{132}
  x_{133}
  x_{212}
  x_{222}
  x_{231}
  x_{311}^2
  x_{323}
  \big)
  \\
  + \,
  &\mathcal{O}^-
  \big( \,
  x_{123}
  x_{132}
  x_{133}
  x_{212}
  x_{221}
  x_{232}
  x_{311}
  x_{313}
  x_{321}
  \big)
  \\
  + \,
  &\mathcal{O}^-
  \big( \,
  x_{123}
  x_{132}
  x_{133}
  x_{212}
  x_{221}
  x_{231}
  x_{311}
  x_{313}
  x_{322}
  \big)
  \\
  + \,
  &\mathcal{O}^-
  \big( \,
  x_{123}
  x_{132}
  x_{133}
  x_{212}
  x_{213}
  x_{221}
  x_{312}
  x_{321}
  x_{331}
  \big)
  \\
  + \,
  &\mathcal{O}^-
  \big( \,
  x_{123}
  x_{132}
  x_{133}
  x_{212}^2
  x_{231}
  x_{313}
  x_{321}^2
  \big)
  \\
  + \,
  &\mathcal{O}^-
  \big( \,
  x_{123}
  x_{132}
  x_{133}
  x_{211}
  x_{212}
  x_{232}
  x_{311}
  x_{321}
  x_{323}
  \big)
  \\
  + \,
  &\mathcal{O}^-
  \big( \,
  x_{123}
  x_{132}^2
  x_{213}
  x_{221}
  x_{231}
  x_{311}
  x_{312}
  x_{323}
  \big)
  \\
  + \,
  &\mathcal{O}^-
  \big( \,
  x_{123}
  x_{132}^2
  x_{213}
  x_{221}
  x_{222}
  x_{311}^2
  x_{333}
  \big)
  \\
  + \,
  &\mathcal{O}^-
  \big( \,
  x_{123}
  x_{132}^2
  x_{213}^2
  x_{231}
  x_{312}
	  x_{321}^2
  \big).
  \end{align*}
\end{theorem}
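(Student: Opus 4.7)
The plan is to derive the theorem as an immediate consequence of the three computational lemmas of this section, in exact parallel with the argument given in degree~$6$. Theorem~\ref{vinbergtheorem} already guarantees that the invariant space in degree $9$ is one-dimensional, so it suffices to exhibit one nonzero invariant; every invariant will then be a scalar multiple of it.

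By the second preceding lemma, the nullspace of $\Lambda_9$ computed modulo $p=101$ is one-dimensional, with canonical basis vector whose coefficients take the values $-1$, $0$, $+1$ occurring $4608$, $13404$, $4608$ times respectively. By the third (orbit) lemma, these coefficients are constant on the $G$-orbits of weight-zero monomials up to the sign character $\epsilon$, so the basis vector is an integer linear combination of alternating orbit sums $\mathcal{O}^-(M)$; symmetric sums $\mathcal{O}^+(M)$ do not appear. Exactly $14$ of the $44$ orbits contribute, each with coefficient $+1$; these are the orbits listed in Table~\ref{degree9invariant}.

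A quick sanity check is to add the orbit sizes in Table~\ref{degree9invariant}: the total $648+648+1296+648+648+1296+648+1296+216+648+72+432+648+72$ equals $9216$, matching exactly the total number $4608+4608$ of nonzero entries reported above. This confirms that Table~\ref{degree9invariant} accounts for the full support of the nullspace vector and that the alternating orbit sum structure is consistent with the sign distribution of the coefficients.

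Finally, exactly as in Lemma~\ref{d6lemma}, one lifts the coefficients from $\mathbb{F}_{101}$ to $\mathbb{Z}$ by taking the symmetric representatives $\{-1,0,1\}$, and verifies in exact integer arithmetic, via Lemma~\ref{actionlemma}, that the resulting polynomial lies in the kernel of each of the six raising operators $T^{(\ell)}_m$. I expect this last verification to be the main obstacle: the degree-$9$ weight-zero space has dimension $22620$ and each of the six target weight spaces has dimension $17802$, so assembling and applying $\Lambda_9$ over $\mathbb{Z}$ is substantially more demanding than the modular computation used to locate the basis vector in the first place. Once that verification succeeds, the polynomial is a nonzero integer invariant, hence spans the one-dimensional complex invariant space, completing the proof.
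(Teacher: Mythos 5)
Your proposal matches the paper's argument: the theorem is proved there simply as a restatement of the preceding lemmas (one-dimensionality of the degree-$9$ invariant space from Theorem~\ref{vinbergtheorem} and the modular nullspace computation, the integer lift with exact verification against the $T^{(\ell)}_m$, and the decomposition into the $14$ alternating orbit sums of Table~\ref{degree9invariant}). Your orbit-size check $648+648+\cdots+72 = 9216 = 2\cdot 4608$ is a correct consistency observation, and nothing essential differs from the paper's route.
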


\begin{proof}
This is a restatement of the results in Table \ref{degree9invariant}.
\end{proof}


\section{Degree 12}

We find explicit forms of two linearly independent invariants in degree 12,
and verify that the simpler invariant can be taken as the generator in degree 12.

\begin{lemma}
In degree $12$, there are $302274$ monomials of weight zero,
and $254961$ monomials of each higher weight $\Omega_{\ell m}$ for $\ell = 1, 2, 3$ and $m = 1, 2$.
\end{lemma}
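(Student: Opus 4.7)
The plan is to follow exactly the computational approach used in the proof of Lemma \ref{d6w0lemma}, but with $N=12$ in place of $N=6$; the only real difference is the scale of the enumeration.

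First I would count the weight zero monomials directly. By the lemma characterising $W(N\mid 0,0,0,0,0,0)$, a weight zero monomial in degree $12$ corresponds to a $3\times 3\times 3$ non-negative integer array $E=(e_{ijk})$ whose horizontal, lateral, and frontal parallel slices each sum to $N/3=4$. I would run the nested-loop procedure of Table \ref{generateweightzero} with $N=12$, which enumerates all flattenings $[f_1,\dots,f_{27}]$ with non-negative entries satisfying these three batches of slice-sum equalities. The length of the returned list is the stated number $302274$.

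Next I would count the higher-weight monomials. By Lemma \ref{weightlemma}, the condition that a monomial lie in $W(12\mid\Omega_{11})=W(12\mid 2,-1,0,0,0,0)$ translates into prescribed parallel-slice totals: letting $a_i=\sum_{j,k}e_{ijk}$, the equations $a_1+a_2+a_3=12$, $a_1-a_2=2$, $a_2-a_3=-1$ force $(a_1,a_2,a_3)=(5,3,4)$, while the direction-$2$ and direction-$3$ slice totals must each equal $(4,4,4)$. So I would count these arrays by the same type of nested-loop enumeration, with the outer loop on the first-direction slice sums replaced by the fixed tuple $(5,3,4)$. For the other five weights $\Omega_{\ell m}$, the analogous analysis gives slice totals that are permutations of $(5,3,4)$ in the appropriate direction, so the $(S_3\times S_3\times S_3)\rtimes S_3$ symmetry that permutes slices within a direction and permutes the three directions identifies all six weight spaces, and the enumeration only needs to be carried out once. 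The common count should come out to $254961$.

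The main obstacle here is entirely practical rather than conceptual: the weight zero list in degree $12$ is about $260$ times larger than in degree $6$, so a naive enumeration must be organised to avoid unnecessary work (for instance by using the outer loops on slice sums, as in Table \ref{generateweightzero}, to prune the search space immediately rather than generating all $\binom{N/3+8}{8}^3$ candidates and filtering). Once the enumeration is carried out, as in Lemma \ref{d6w0lemma} using the \texttt{LinearAlgebra[Modular]} machinery in Maple 15, the two counts $302274$ and $254961$ are read off directly, and no further argument is required.
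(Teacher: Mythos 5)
Your proposal is correct and is essentially the paper's argument: the paper's proof (by reference to Lemma \ref{d6w0lemma}) is simply a machine enumeration, and your weight-zero count is obtained exactly as in Table \ref{generateweightzero} with $N=12$, i.e.\ slice sums $N/3=4$ in each of the three directions. The one place you deviate is the higher-weight count: the paper generates the monomials of weight $\Omega_{1m}$ by applying $T^{(1)}_m$ to the weight-zero monomials as in Table \ref{generatehigherweights} (and then transports them to the other two directions by the index symmetry), whereas you enumerate directly the exponent arrays with direction-one slice sums $(5,3,4)$ and slice sums $(4,4,4)$ in the other two directions. The two descriptions give the same set: any array with slice sums $(5,3,4),(4,4,4),(4,4,4)$ has a positive entry in its first slice, and moving one unit from position $(1,j,k)$ to $(2,j,k)$ exhibits it as a term of $T^{(1)}_1$ applied to a weight-zero monomial, so the paper's procedure (after removing duplicates) exhausts the weight space; and your reduction of the six weights to a single count is valid because the forced slice sums for the other weights are permutations of $(5,3,4)$ in the appropriate direction, which the slice- and direction-permuting symmetry realizes bijectively. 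One small slip: the monomial generation in Lemma \ref{d6w0lemma} does not involve the \texttt{LinearAlgebra[Modular]} package, which enters only later for the nullspace computations; this does not affect your argument. As with the paper, the specific values $302274$ and $254961$ are certified only by actually carrying out the enumeration, so no further mathematical content is missing.
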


Even using modular arithmetic, it is impossible to process efficiently a matrix
with 302274 columns and $302274 + 254961 = 557235$ rows.
Therefore we look for invariants that are linear combinations
of the symmetric orbit sums.

\begin{lemma}
In degree $12$, there are $359$ orbits for the action of
$( S_3 \times S_3 \times S_3 ) \rtimes S_3$
on weight zero monomials.
For each orbit, the flattened minimal representative and the orbit size are given in Tables
$\ref{degree12table1}$--$\ref{degree12table5}$.
\end{lemma}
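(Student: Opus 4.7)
The proof would be a direct computational enumeration, parallel in spirit to the proofs of Lemmas \ref{d6w0lemma} and \ref{d6orbitlemma} but carried out in degree $12$. The plan is as follows. First, I would run the algorithm of Table \ref{generateweightzero} with $N=12$ to produce the complete list of $302274$ flattened exponent arrays $E$ whose associated monomial $M(E)$ has weight zero, storing them in the lexicographic order of Definition \ref{monomialorder}. This is the same code path already validated in degrees $6$ and $9$, only with a larger outer bound $N/3 = 4$ on each slice sum; the cardinality $302274$ can be cross-checked against the coefficient of the appropriate term in the character formula / multivariate generating series for the weight zero subspace of $S^{12}(\mathbb{C}^{333})$.

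Next, I would walk through this sorted list and partition it into $G$-orbits, where $G = (S_3 \times S_3 \times S_3) \rtimes S_3$ has order $6^4 = 1296$. For each exponent array $E$ not yet marked, I would call \texttt{largegrouporbit} (Table \ref{generateorbits}), which composes \texttt{smallgrouporbit} with the $S_3$ action permuting the three directions; by the orbit-stabilizer theorem the resulting orbit has size dividing $1296$. I would mark every member of the orbit as seen, declare the first element encountered (which is automatically the lex-minimum, since we process monomials in lex order) to be the minimal representative of that orbit, and record both the representative and the orbit size. Iterating to exhaustion yields a list of orbits whose sizes must sum to $302274$; the claim is that this list has exactly $359$ entries, matching the data recorded in Tables \ref{degree12table1}--\ref{degree12table5}.

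Correctness of the enumeration rests on two easy observations which I would state explicitly: (a) the $G$-action of Definition (section 2) preserves the weight zero condition, since permuting slices or directions preserves all six marginal sums $\omega_{\ell m}$, so orbits indeed lie inside the weight zero space; and (b) the lex-first element of any orbit is a uniquely determined canonical representative, making the count of distinct orbits unambiguous. As a sanity check I would verify that $\sum_{\mathcal{O}} |\mathcal{O}| = 302274$ and that every orbit size divides $1296$, and I would spot-check a few small orbits by hand (in particular, orbits supported on slice-diagonal monomials where the stabilizer is easy to write down).

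The main obstacle is not conceptual but computational: naively testing each new monomial against all previously recorded orbits would be quadratic in $302274$. The fix is to use the lex order plus a hash table (or the binary-search primitive \texttt{monomialindex} already used in Table \ref{fillmatrix}) to mark seen monomials in $O(\log)$ time per lookup, so the whole enumeration runs in time roughly $|G| \cdot 302274 \log(302274)$, which is entirely feasible in Maple 15. The output of the enumeration is exactly the data tabulated in Tables \ref{degree12table1}--\ref{degree12table5}, and inspecting the length of that output establishes the count of $359$.
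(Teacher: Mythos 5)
Your proposal is correct and is essentially the paper's own argument: the lemma is established (like the analogous statements in degrees 6 and 9) by running the Maple~15 implementations of the algorithms in Tables~\ref{generateweightzero} and~\ref{generateorbits} to list the $302274$ weight zero monomials and partition them into orbits of $(S_3 \times S_3 \times S_3) \rtimes S_3$, recording the lexicographically minimal representative and size of each orbit. Your additional bookkeeping (processing in lex order so the first unmarked element is the minimal representative, checking that orbit sizes divide $1296$ and sum to $302274$) is just a more explicit description of the same computation.
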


We consider a matrix with 359 columns, one for each symmetric orbit sum.

\begin{lemma} \label{d12modularlemma}
The nullspace of the matrix representing the restriction of $\Lambda_{12}$
to the span of the symmetric orbit sums has dimension $2$ modulo $p = 101$.
\end{lemma}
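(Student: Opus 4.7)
This is a rank computation. The plan is to assemble the matrix $M$ representing $\Lambda_{12}$ restricted to the $359$-dimensional subspace $V \subset W(12 \mid 0,0,0,0,0,0)$ spanned by the symmetric orbit sums $f_i = \mathcal{O}^+( M(E_i) )$, and to compute $\dim \ker M$ modulo $p = 101$.

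First I would enumerate the $359$ orbits using the algorithms of Table \ref{generateorbits} applied to the minimal representatives listed in Tables \ref{degree12table1}--\ref{degree12table5}, and store each $f_i$ as a sparse coefficient vector indexed by the $302274$ weight-zero monomials. Next, for each column index $i$ and each raising operator $T_m^{(\ell)}$ ($\ell = 1,2,3$, $m = 1,2$), I would apply Lemma \ref{actionlemma} monomial-by-monomial to compute $T_m^{(\ell)} \cdot f_i$ as a sparse vector over $\mathbb{F}_{101}$ in the $254961$-dimensional higher-weight space $W(12 \mid \Omega_{\ell m})$, and stack these six vectors as the $i$-th column of $M$. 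Finally, I would compute the rank of $M$ using the sparse modular routines in Maple's \texttt{LinearAlgebra[Modular]} package; the expected outcome is rank $357$, giving a two-dimensional nullspace.

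The main obstacle is the sheer size of $M$, whose full row dimension is $6 \cdot 254961 \approx 1.5 \times 10^6$. Two observations keep the computation feasible. First, each column is extremely sparse: a single weight-zero monomial contributes at most nine terms per $T_m^{(\ell)}$, so column $i$ has at most $54 \, |\mathcal{O}(M(E_i))|$ nonzero entries, and only the higher-weight monomials that actually appear need to be indexed as rows. Second, every $f_i \in V$ is invariant under the full symmetry group $G = ( S_3 \times S_3 \times S_3 ) \rtimes S_3$, which allows one to discard redundant operators via $G$-equivariance: the $3$-cycle in $S_3^{(1)}$ permuting slices in direction $1$ conjugates $T_1^{(1)}$ to $T_2^{(1)}$, and the $3$-cycle in the last $S_3$ factor permutes the three copies of $\mathfrak{sl}_3(\mathbb{C})$ cyclically, mapping $T_m^{(1)} \mapsto T_m^{(2)} \mapsto T_m^{(3)}$. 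Hence for $f \in V$ the single condition $T_1^{(1)} \cdot f = 0$ already forces $T_m^{(\ell)} \cdot f = 0$ for all $\ell, m$, and the whole computation reduces to determining the rank of one sparse matrix of size $254961 \times 359$ over $\mathbb{F}_{101}$, which Maple 15 handles easily. The resulting nullspace dimension of $2$ is consistent with the two invariants $I_6^2$ and $I_{12}$ predicted by Theorem \ref{vinbergtheorem}.
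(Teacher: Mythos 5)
Your proposal is correct and, at its core, is the same computation as the paper's: form the matrix of the raising operators restricted to the $359$-dimensional span of the symmetric orbit sums, and verify by modular linear algebra that its rank is $357$, hence the nullspace has dimension $2$ over $\mathbb{F}_{101}$. The one genuine difference is organizational: the paper processes all six operators $T^{(\ell)}_m$ in turn, storing each $254961 \times 359$ block and recomputing the row canonical form, and merely \emph{observes} that the rank reaches $357$ after the first iteration ($\ell = m = 1$) and never increases; you instead argue a priori, via $G$-equivariance, that on $G$-invariant polynomials the single condition $T^{(1)}_1 \cdot f = 0$ forces $T^{(\ell)}_m \cdot f = 0$ for all $\ell, m$, since every raising operator is a $G$-conjugate of $T^{(1)}_1$ (slice permutations in a fixed direction conjugate the matrix unit $U_{1,2}$ into the other off-diagonal units, and the direction-permuting $S_3$ cycles the three $\mathfrak{sl}_3$ summands), while the symmetric orbit sums are fixed by $G$ by construction. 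This is a sound argument and in fact explains the empirical stabilization the paper reports; it buys you a sixfold reduction in the verification (one sparse $254961 \times 359$ rank computation instead of six), at the cost of having to state and justify the conjugacy claim, whereas the paper's redundant iterations double as an internal consistency check. Either way the lemma ultimately rests on the machine computation of the rank modulo $101$, which your plan carries out in the same way as the paper.
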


\begin{proof}
We use modular arithmetic on a matrix $M$ with an upper block of size $359 \times 359$ and
a lower block of size $254961 \times 359$.
For each $\ell = 1, 2, 3$ and $m = 1, 2$ we consider the restriction of
$T^{(\ell)}_m$ to the subspace of $W( 12 \mid 0,0,0,0,0,0 )$ spanned by the symmetric
orbit sums; we store the corresponding matrix in the lower block,
and compute the row canonical form.
After the first iteration ($\ell = m = 1$) the rank is 357, and does not increase for
the remaining five iterations.
\end{proof}

Lemma \ref{d12modularlemma} agrees with Theorem \ref{vinbergtheorem}:
a basis of the invariants in degree 12 consists of $I_6^2$ together with a new generator $I_{12}$.
We confirm this result with further computations using integer arithmetic.

\begin{lemma} \label{smithlemma}
Let $A$ be a matrix with integer entries.
If $r_0$ is its rank over $\mathbb{Q}$ and $r_p$ its its rank over the prime field $\mathbb{F}_p$,
then $r_p \le r_0$.  Hence the dimension of the nullspace
over $\mathbb{Q}$ is no larger than the dimension of the nullspace over $\mathbb{F}_p$.
\end{lemma}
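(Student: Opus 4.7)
The plan is to prove this via the Smith normal form, which is exactly what the lemma's label hints at. Recall that for any $m \times n$ matrix $A$ with integer entries there exist unimodular integer matrices $U \in GL_m(\mathbb{Z})$ and $V \in GL_n(\mathbb{Z})$ (each with determinant $\pm 1$) and a diagonal matrix $D$ of the same shape as $A$ with diagonal entries $d_1 \mid d_2 \mid \cdots \mid d_r \mid 0 \mid \cdots \mid 0$ (all $d_i$ nonzero integers) such that $A = U D V$. The number of nonzero diagonal entries equals $r_0$, the rank of $A$ over $\mathbb{Q}$.

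The key observation is that $U$ and $V$ remain invertible after reduction modulo any prime $p$, since $\det(U), \det(V) \in \{\pm 1\}$ which are units in $\mathbb{F}_p$. Consequently reduction modulo $p$ commutes with the factorization: $\bar A = \bar U \bar D \bar V$ is a factorization of the reduced matrix with $\bar U, \bar V$ invertible over $\mathbb{F}_p$. Therefore the rank of $\bar A$ over $\mathbb{F}_p$ equals the rank of $\bar D$ over $\mathbb{F}_p$, which is the number of indices $i \le r$ for which $p \nmid d_i$. This count is clearly at most $r$, so $r_p \le r_0$.

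For the nullspace statement, note that $A$ and $\bar A$ have the same number $n$ of columns, so by the rank-nullity theorem the nullspace of $A$ over $\mathbb{Q}$ has dimension $n - r_0$ while the nullspace of $\bar A$ over $\mathbb{F}_p$ has dimension $n - r_p$. The inequality $r_p \le r_0$ then yields $n - r_0 \le n - r_p$, giving the desired inequality on nullspace dimensions.

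There is really no main obstacle here: the proof is essentially a bookkeeping exercise once one invokes Smith normal form. The only minor subtlety is making explicit that the elementary divisors $d_i$ may vanish modulo $p$, causing a genuine drop in rank (and a genuine increase in nullity) whenever some $d_i$ is divisible by $p$. This is precisely why the modular computation in Lemma \ref{d12modularlemma} gives an upper bound on the characteristic-zero nullity, justifying its use in the preceding argument that $I_6^2$ and a new $I_{12}$ span the degree $12$ invariants.
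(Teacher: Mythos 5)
Your proof is correct, but it takes a different route from the paper. Despite what the label \texttt{smithlemma} suggests, the paper does not invoke the Smith normal form at all: it simply recalls that the rank of a matrix over a field is $r$ exactly when every $(r{+}1)$-minor vanishes and some $r$-minor does not, and observes that the minors are integer polynomials in the entries, so any minor that vanishes over $\mathbb{Q}$ (equivalently, over $\mathbb{Z}$) also vanishes after reduction modulo $p$; hence $r_p \le r_0$. That argument is shorter and more elementary, needing nothing beyond the determinantal characterization of rank. Your factorization $A = UDV$ with unimodular $U, V$ and elementary divisors $d_1 \mid \cdots \mid d_{r_0}$ requires the existence of the Smith normal form, but it buys a sharper statement: $r_p$ equals the number of $d_i$ not divisible by $p$, so you can say precisely which primes cause a rank drop and by how much, and in particular that $r_p = r_0$ for all but finitely many $p$. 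Both arguments correctly yield the nullspace inequality via rank--nullity, and your closing remark about how the lemma is used to turn the modular computation into an upper bound on the characteristic-zero nullity matches the paper's intent.
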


\begin{proof}
The rank of a matrix is $r$ if and only if all $(r{+}1)$-minors are zero, and at least one $r$-minor is not zero.
Since these minors are polynomials in the matrix entries with integer coefficients, the claim follows.
\end{proof}

\begin{theorem}
In degree $12$, the polynomials $I_{12}$ and $I_{12}'$ given in
Tables $\ref{degree12table1}$--$\ref{degree12table5}$ form a reduced basis
for the lattice of invariants  with integer coefficients.
\end{theorem}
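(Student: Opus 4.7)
The plan is to lift the modular computation of Lemma \ref{d12modularlemma} to integer arithmetic and then perform lattice reduction.

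First I would pin down the rank. By Theorem \ref{vinbergtheorem} the space of $\mathfrak{sl}_{333}(\mathbb{C})$-invariants in degree $12$ is $2$-dimensional over $\mathbb{C}$, spanned by $I_6^2$ and a new fundamental invariant $I_{12}$. Both are fixed by $(S_3 \times S_3 \times S_3) \rtimes S_3$, whose action commutes with that of $\mathfrak{sl}_{333}(\mathbb{C})$, so both lie in the span $V$ of the $359$ symmetric orbit sums. Combined with Lemma \ref{smithlemma}, this forces the $\mathbb{Q}$-nullspace of the restriction of $\Lambda_{12}$ to $V$ to have dimension exactly $2$, matching the modular dimension of Lemma \ref{d12modularlemma}. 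Since each monomial appears with coefficient $1$ in its symmetric orbit sum, integer coefficients in the orbit-sum basis correspond to integer coefficients in the $x_{ijk}$ monomial basis, so the set $L$ of integer invariants in $V$ is a rank-$2$ sublattice of $\mathbb{Z}^{359}$.

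Next I would execute the matrix construction of Lemma \ref{d12modularlemma} over $\mathbb{Z}$: for each $\ell \in \{1,2,3\}$ and $m \in \{1,2\}$ assemble the $254961 \times 359$ integer block representing $T^{(\ell)}_m$ on the symmetric orbit sums, stack the six blocks, and compute an integer basis of the right kernel via Hermite normal form. Applying Lagrange--Gauss reduction to the two resulting vectors (this is LLL in rank $2$ and produces a Minkowski-reduced basis) yields a reduced basis of $L$. I would then read off the two $359$-tuples and verify, orbit by orbit, that they agree up to sign with the coefficients of $I_{12}$ and $I_{12}'$ tabulated in Tables \ref{degree12table1}--\ref{degree12table5}.

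The main obstacle is the integer kernel computation itself: direct Hermite normal form of a matrix with roughly $1.5 \times 10^6$ rows is prone to severe coefficient growth. In practice I would compute nullspace bases modulo several primes, reconstruct the integer kernel by the Chinese remainder theorem together with rational reconstruction (exactly as in the proof of Lemma \ref{d6lemma}), and only then pass the reconstructed basis to LLL. As an independent certificate I would apply each $T^{(\ell)}_m$ to the two candidate polynomials in exact integer arithmetic and confirm that the image vanishes, thereby certifying $\mathbb{C}$-invariance without the modular argument, and verify the Lagrange--Gauss inequalities on the output to rule out the existence of a shorter integer invariant.
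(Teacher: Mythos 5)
Your computational core is essentially the paper's: restrict $\Lambda_{12}$ to the span of the $359$ symmetric orbit sums, compute the exact integer kernel via Hermite normal form, reduce the rank-$2$ kernel lattice by Lagrange--Gauss, and certify the two output vectors by applying all $T^{(\ell)}_m$ in exact integer arithmetic. The paper handles the $6 \times 254961$ rows by blockwise processing (399 blocks of 639 rows, recomputing the HNF after each block) and extracts the kernel basis from the unimodular transform $U$ in $U M^t = H$; your multi-modular/CRT alternative is reasonable, but note that rational reconstruction only yields the $\mathbb{Q}$-kernel, so you must still saturate (e.g.\ a small exact HNF on the reconstructed $2 \times 359$ basis) to obtain the full integer kernel lattice --- verifying that your two candidates are integer invariants and satisfy the Lagrange--Gauss inequalities does not by itself rule out that they span a proper finite-index sublattice of the lattice of integer invariants, which is what the theorem asserts.

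One step of your ``rank pinning'' paragraph is wrong as stated. The finite group $G = (S_3 \times S_3 \times S_3) \rtimes S_3$ does not commute with the action of $\mathfrak{sl}_{333}(\mathbb{C})$; it merely normalizes it (it permutes the operators $H^{(\ell)}_m$, $T^{(\ell)}_m$), so the space of invariants is $G$-stable but its elements need not be $G$-fixed. The paper's own degree-$9$ invariant is the counterexample: it is an alternating orbit sum, i.e.\ it transforms by the sign character of $G$. Hence you cannot conclude a priori that the new degree-$12$ generator lies in the span of the symmetric orbit sums, and the asserted ``dimension exactly $2$'' of the restricted nullspace does not follow from Theorem \ref{vinbergtheorem} plus Lemma \ref{smithlemma} alone (these give only the upper bound, together with the lower bound $1$ from $I_6^2$). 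That the rank is $357$, equivalently that both invariants are symmetric orbit-sum combinations, is exactly what the modular and then integer computations establish a posteriori --- and in your plan it falls out of the integer kernel computation anyway, so the flawed justification is not load-bearing, but it should be deleted or replaced by the computational statement.
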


\begin{proof}
For each $\ell = 1, 2, 3$ and $m = 1, 2$ we totally order the 254961 higher weight
monomials, and partition each set into 399 blocks of 639 consecutive monomials.
For $j = 1, 2, \dots 359$ we compute the action of $T^{(\ell)}_m$ on the $j$-th symmetric
orbit sum, and separate the resulting terms corresponding to the different blocks.

We create an integer matrix $M$ with an upper block of size $359 \times 359$ and a lower block of size $639 \times 359$,
initialized to zero.
For $\ell = 1, 2, 3$, $m = 1, 2$, $j = 1, 2, \dots, 359$, $k = 1, 2, \dots, 399$
we consider the terms in block $k$ obtained by applying $T^{(\ell)}_m$ to the $j$-th symmetric orbit sum.
We store the integer coefficients of these higher weight monomials in the lower block,
and compute the Hermite normal form (HNF).

We obtain the same behavior as in Lemma \ref{d12modularlemma}:
after the first iteration the rank is 357, and does not increase for the remaining iterations.
At termination, we have an integer matrix of size $357 \times 359$,
also called $M$, whose integer nullspace consists of the coefficient vectors of
the invariant polynomials of degree 12.

To find a lattice basis for this integer nullspace, we compute the HNF of the
transpose $M^t$, obtaining integer matrices $U$ (invertible) and $H$, of sizes
$359 \times 359$ and $359 \times 357$ respectively, for which $U M^t = H$.
The last two rows of $U$ form a lattice basis for the integer nullspace of $M$.
We apply the algorithm of Gauss-Lagrange to these last two rows to find
a short basis of the integer nullspace.
(For an introduction to lattice basis reduction, see Bremner \cite{BremnerBook}.)
Since the lattice is 2-dimensional, the output consists of a
shortest nonzero vector and a shortest vector which is not a multiple
of the first vector.
The components of these reduced basis vectors $I_{12}$ and $I'_{12}$ are
given in Tables \ref{degree12table1}--\ref{degree12table5}.
Finally, we perform an independent check using integer arithmetic that $I_{12}$ and $I'_{12}$
are annihilated by all the $T^{(\ell)}_m$.
\end{proof}

\begin{lemma}
We have $I'_{12} = I_6^2 + 21 \, I_{12}$.
Hence $I_{12}$ is the fundamental invariant in degree 12;
it involves $235$ of the $359$ orbits, with a total of $209061$ monomials.
\end{lemma}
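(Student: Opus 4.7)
The plan is to exploit the fact, established in Theorem \ref{vinbergtheorem}, that the space of invariants in degree $12$ is two-dimensional, so that $I_6^2$, $I_{12}$ and $I'_{12}$ must satisfy a linear relation over $\mathbb{Q}$. Since $I_6$ is a sum of symmetric orbit sums, the product $I_6 \cdot I_6$ is invariant under $G = (S_3 \times S_3 \times S_3) \rtimes S_3$ and so lies in the $359$-dimensional subspace of weight-zero polynomials spanned by the symmetric orbit sums of degree $12$. Therefore $I_6^2$, $I_{12}$ and $I'_{12}$ all have coordinate vectors in $\mathbb{Z}^{359}$, and the claimed identity $I'_{12} = I_6^2 + 21 \, I_{12}$ amounts to a single integer equality of such vectors.

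First I would form the explicit symbolic expansion of $I_6^2$ from the expression in the Theorem of Section~3. Each product of two monomials from $I_6$ is a weight-zero monomial of degree $12$; I would canonicalize it by computing its $G$-orbit minimum (using the \texttt{smallgrouporbit} and \texttt{largegrouporbit} routines of Table~\ref{generateorbits}) and increment the coefficient of the corresponding orbit representative. Dividing each accumulated coefficient by the orbit size yields the expansion of $I_6^2$ as an integer combination of the $359$ symmetric orbit sums tabulated in Tables \ref{degree12table1}--\ref{degree12table5}.

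With the three coefficient vectors $v(I_6^2)$, $v(I_{12})$ and $v(I'_{12})$ in hand, I would solve the $2 \times 2$ system determined by any two orbits on which both $v(I_{12})$ and $v(I'_{12})$ have linearly independent restrictions, obtaining $v(I'_{12}) = v(I_6^2) + 21 \, v(I_{12})$. A global cross-check on all $359$ coordinates then verifies the identity exactly over $\mathbb{Z}$. For the second assertion, the algebra of invariants is freely generated in degrees $6, 9, 12$ by Theorem \ref{vinbergtheorem}; a complement of $\mathbb{C} \cdot I_6^2$ in the degree-$12$ invariants is a valid choice of fundamental generator, and the relation $I'_{12} = I_6^2 + 21 \, I_{12}$ exhibits $I_{12}$ as the unique (up to sign) shortest such complement in the reduced lattice basis. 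The count — that $I_{12}$ has nonzero coefficients on $235$ of the $359$ orbits and $209061$ monomials in total — is then obtained by counting the nonzero entries of $v(I_{12}) = 21^{-1}(v(I'_{12}) - v(I_6^2))$ and summing the orbit sizes listed in the tables.

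The main obstacle is purely one of scale in the first step: expanding $I_6^2$ produces on the order of $1152^2 \approx 1.3 \times 10^6$ monomial products that must be canonicalized under a group of order $|G| = 6^4 = 1296$ and accumulated into coefficients on $359$ orbits. This is feasible but requires hashing the flattened orbit representatives so that orbit lookup is $O(\log)$ rather than $O(|G|)$ per term. Once $v(I_6^2)$ is computed, the remaining linear algebra — solving for the coefficients $1$ and $21$, verifying the identity on all coordinates, and reading off the final monomial count — is elementary, and together with the integer verification of annihilation by each $T^{(\ell)}_m$ already established in the preceding theorem, it completes the proof.
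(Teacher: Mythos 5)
Your proposal is correct and follows essentially the same route as the paper: the paper also establishes the identity by explicitly computing $I_6^2$ (via sparse polynomial arithmetic in Maple) and expressing it as a linear combination of the lattice basis $I_{12}$, $I'_{12}$, from which $I'_{12} = I_6^2 + 21\,I_{12}$ and the orbit/monomial counts follow. Your reduction of the $\approx 1152^2$ products to coefficients on the $359$ symmetric orbit sums is just a different (equally valid) organization of the same computation.
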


\begin{proof}
We compute the expression for $I_6^2$ as a linear combination of $I_{12}$ and $I'_{12}$.
This can be performed very efficiently in Maple 15 using the algorithms
of Monagan and Pearce \cite{MP1,MP2} which are based on the work of Johnson \cite{J}.
\end{proof}

We can now state our main result, and an open problem.

\begin{theorem}
The three fundamental invariants for $3 \times 3 \times 3$ arrays are:
  \begin{itemize}
  \item
the polynomial $I_6$ of Table $\ref{degree6invariant}$, which is a linear combination of
the $8$ symmetric orbits in degree $6$, and has altogether $1152$ terms;
  \item
the polynomial $I_9$ of Table $\ref{degree9invariant}$, which is the sum of $14$ alternating
orbits in degree $9$, and has altogether $9216$ terms;
  \item
the polynomial $I_{12}$ of Tables $\ref{degree12table1}$--$\ref{degree12table5}$, which is a
linear combination of $235$ symmetric orbits in degree $12$, and has altogether $209061$ terms.
  \end{itemize}
\end{theorem}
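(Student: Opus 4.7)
The plan is to assemble the statement directly from the results already proved in each of the three degree sections, with the overall algebraic structure guaranteed by Vinberg's Theorem \ref{vinbergtheorem}. That theorem tells us that the invariant algebra is a polynomial ring freely generated by one homogeneous element in each of degrees $6$, $9$, and $12$, so the task reduces to pinning down one explicit nonzero invariant in each of these degrees and then checking, in degree $12$, that the candidate is not a scalar multiple of $I_6^2$.

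First, for the degree $6$ piece I would cite the theorem preceding the degree $9$ section, where it was shown that the invariant space is one-dimensional and spanned by the linear combination of the $8$ symmetric orbit sums with coefficients $\{-10,-4,-2,1,2,4,8\}$ listed in Table \ref{degree6invariant}; summing the listed orbit sizes $36+324+162+36+108+324+54+108=1152$ gives the term count. Next, for degree $9$, I would invoke the analogous theorem in that section, which expresses the unique (up to scalar) invariant as a sum of the $14$ alternating orbit sums in Table \ref{degree9invariant}; summing those $14$ orbit sizes yields $9216$. For both of these two degrees, nothing more is needed: freeness from Vinberg forces algebraic independence.

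For degree $12$ the step that actually requires attention is distinguishing the new fundamental invariant $I_{12}$ from the reducible invariant $I_6^2$, since by Theorem \ref{vinbergtheorem} the space of degree $12$ invariants is two-dimensional and contains the square of $I_6$. Here I would appeal to the preceding lemmas: Lemma \ref{d12modularlemma} establishes that the nullspace of $\Lambda_{12}$ restricted to symmetric orbit sums has dimension $2$ mod $101$, Lemma \ref{smithlemma} lifts this to a bound on the $\mathbb{Q}$-nullspace, and the integer Hermite/Gauss--Lagrange computation produces the reduced lattice basis $I_{12}, I'_{12}$ in Tables \ref{degree12table1}--\ref{degree12table5}. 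The identity $I'_{12} = I_6^2 + 21\,I_{12}$ established just before the theorem then certifies that $I_{12}$ is not a scalar multiple of $I_6^2$ and may therefore serve as the fundamental degree-$12$ generator; summing the $235$ orbit sizes with nonzero coefficient in $I_{12}$ gives $209061$.

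The only genuine obstacle in this proof is the degree-$12$ disentanglement just described: once $I_{12}$ and $I'_{12}$ are in hand as a lattice basis, one must actually verify the relation $I'_{12} = I_6^2 + 21\,I_{12}$, which is a polynomial identity with hundreds of thousands of terms. I would simply cite Lemma immediately preceding the theorem for this identity, whose proof reduces the verification to a polynomial multiplication handled by the Monagan--Pearce algorithm. With this relation in hand, the three-part statement of the theorem is just a bookkeeping compilation of Tables \ref{degree6invariant}, \ref{degree9invariant}, and \ref{degree12table1}--\ref{degree12table5}.
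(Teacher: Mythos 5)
Your proposal is correct and matches the paper's (implicit) argument: the final theorem carries no separate proof and is exactly the compilation you describe, resting on Vinberg's Theorem \ref{vinbergtheorem}, the degree~6 and degree~9 uniqueness theorems, the degree~12 lattice computation, and the identity $I'_{12} = I_6^2 + 21\,I_{12}$ to certify that $I_{12}$ is a genuine new generator. Your orbit-size bookkeeping for the term counts $1152$, $9216$, $209061$ is also consistent with the tables.
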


\begin{openproblem}
Corollary 3.9 of Gelfand et al.~\cite{GKZ} implies that the hyperdeterminant
of a $3 \times 3 \times 3$ array has degree 36, and hence has the form
  \[
  a \, I_6^6 +
  b \, I_6^4 I_{12} +
  c \, I_6^3 I_9^2 +
  d \, I_6^2 I_{12}^2 +
  e \, I_6 I_9^2 I_{12} +
  f \, I_9^4 +
  g \, I_{12}^3,
  \]
for some $a, b, c, d, e, f, g \in \mathbb{C}$.
Determine these coefficients.
\end{openproblem}


\section*{Acknowledgements}

The first author was partially supported by a Discovery Grant from NSERC.
The authors thank the referees and Luke Oeding for helpful comments.


  \begin{table} \tiny
  \[
  \begin{array}{ccccc}
  I_{12} &\qquad I'_{12} &\qquad
  \text{MINIMAL REPRESENTATIVE} &\qquad \text{ORBIT SIZE} &\qquad \#
  \\
  \midrule
    0 &\qquad    1 &\qquad 0\,0\,0\,0\,0\,0\,0\,0\,4\,0\,0\,0\,0\,4\,0\,0\,0\,0\,4\,0\,0\,0\,0\,0\,0\,0\,0 &\qquad   36 &\qquad   1 \\
    0 &\qquad   -4 &\qquad 0\,0\,0\,0\,0\,0\,0\,0\,4\,0\,0\,0\,1\,3\,0\,0\,0\,0\,3\,1\,0\,0\,0\,0\,0\,0\,0 &\qquad  324 &\qquad   2 \\
    0 &\qquad    6 &\qquad 0\,0\,0\,0\,0\,0\,0\,0\,4\,0\,0\,0\,2\,2\,0\,0\,0\,0\,2\,2\,0\,0\,0\,0\,0\,0\,0 &\qquad  162 &\qquad   3 \\
    0 &\qquad    4 &\qquad 0\,0\,0\,0\,0\,0\,0\,0\,4\,0\,1\,0\,1\,2\,0\,0\,0\,0\,2\,1\,0\,1\,0\,0\,0\,0\,0 &\qquad  324 &\qquad   4 \\
    0 &\qquad    8 &\qquad 0\,0\,0\,0\,0\,0\,0\,0\,4\,0\,1\,0\,1\,2\,0\,0\,0\,0\,3\,0\,0\,0\,1\,0\,0\,0\,0 &\qquad  216 &\qquad   5 \\
    0 &\qquad  -16 &\qquad 0\,0\,0\,0\,0\,0\,0\,0\,4\,0\,1\,0\,2\,1\,0\,0\,0\,0\,2\,1\,0\,0\,1\,0\,0\,0\,0 &\qquad  324 &\qquad   6 \\
    0 &\qquad   16 &\qquad 0\,0\,0\,0\,0\,0\,0\,0\,4\,0\,2\,0\,2\,0\,0\,0\,0\,0\,2\,0\,0\,0\,2\,0\,0\,0\,0 &\qquad   54 &\qquad   7 \\
    0 &\qquad   56 &\qquad 0\,0\,0\,0\,0\,0\,0\,0\,4\,1\,1\,0\,1\,1\,0\,0\,0\,0\,1\,1\,0\,1\,1\,0\,0\,0\,0 &\qquad   27 &\qquad   8 \\
    0 &\qquad   12 &\qquad 0\,0\,0\,0\,0\,0\,0\,1\,3\,0\,0\,0\,1\,2\,1\,0\,0\,0\,3\,1\,0\,0\,0\,0\,0\,0\,0 &\qquad  324 &\qquad   9 \\
    0 &\qquad    4 &\qquad 0\,0\,0\,0\,0\,0\,0\,1\,3\,0\,0\,0\,1\,3\,0\,0\,0\,0\,3\,0\,1\,0\,0\,0\,0\,0\,0 &\qquad  216 &\qquad  10 \\
    0 &\qquad  -12 &\qquad 0\,0\,0\,0\,0\,0\,0\,1\,3\,0\,0\,0\,2\,1\,1\,0\,0\,0\,2\,2\,0\,0\,0\,0\,0\,0\,0 &\qquad  648 &\qquad  11 \\
    0 &\qquad    4 &\qquad 0\,0\,0\,0\,0\,0\,0\,1\,3\,0\,0\,1\,0\,3\,0\,0\,0\,0\,3\,0\,0\,1\,0\,0\,0\,0\,0 &\qquad  648 &\qquad  12 \\
    0 &\qquad   -4 &\qquad 0\,0\,0\,0\,0\,0\,0\,1\,3\,0\,0\,1\,1\,2\,0\,0\,0\,0\,2\,1\,0\,1\,0\,0\,0\,0\,0 &\qquad 1296 &\qquad  13 \\
    0 &\qquad   -8 &\qquad 0\,0\,0\,0\,0\,0\,0\,1\,3\,0\,0\,1\,1\,2\,0\,0\,0\,0\,3\,0\,0\,0\,1\,0\,0\,0\,0 &\qquad 1296 &\qquad  14 \\
    0 &\qquad   16 &\qquad 0\,0\,0\,0\,0\,0\,0\,1\,3\,0\,0\,1\,2\,1\,0\,0\,0\,0\,2\,1\,0\,0\,1\,0\,0\,0\,0 &\qquad  648 &\qquad  15 \\
    0 &\qquad   12 &\qquad 0\,0\,0\,0\,0\,0\,0\,1\,3\,0\,1\,0\,0\,2\,1\,0\,0\,0\,3\,0\,0\,1\,0\,0\,0\,0\,0 &\qquad  648 &\qquad  16 \\
    0 &\qquad   -8 &\qquad 0\,0\,0\,0\,0\,0\,0\,1\,3\,0\,1\,0\,1\,1\,1\,0\,0\,0\,2\,1\,0\,1\,0\,0\,0\,0\,0 &\qquad 1296 &\qquad  17 \\
    0 &\qquad  -16 &\qquad 0\,0\,0\,0\,0\,0\,0\,1\,3\,0\,1\,0\,1\,1\,1\,0\,0\,0\,3\,0\,0\,0\,1\,0\,0\,0\,0 &\qquad  648 &\qquad  18 \\
    0 &\qquad   -4 &\qquad 0\,0\,0\,0\,0\,0\,0\,1\,3\,0\,1\,0\,1\,2\,0\,0\,0\,0\,2\,0\,1\,1\,0\,0\,0\,0\,0 &\qquad 1296 &\qquad  19 \\
    0 &\qquad   16 &\qquad 0\,0\,0\,0\,0\,0\,0\,1\,3\,0\,1\,0\,2\,0\,1\,0\,0\,0\,2\,1\,0\,0\,1\,0\,0\,0\,0 &\qquad  648 &\qquad  20 \\
    0 &\qquad  -12 &\qquad 0\,0\,0\,0\,0\,0\,0\,1\,3\,0\,1\,1\,0\,2\,0\,0\,0\,0\,2\,0\,0\,2\,0\,0\,0\,0\,0 &\qquad 1296 &\qquad  21 \\
    0 &\qquad   -8 &\qquad 0\,0\,0\,0\,0\,0\,0\,1\,3\,0\,1\,1\,1\,1\,0\,0\,0\,0\,1\,1\,0\,2\,0\,0\,0\,0\,0 &\qquad  648 &\qquad  22 \\
    0 &\qquad   32 &\qquad 0\,0\,0\,0\,0\,0\,0\,1\,3\,0\,1\,1\,1\,1\,0\,0\,0\,0\,2\,0\,0\,1\,1\,0\,0\,0\,0 &\qquad 1296 &\qquad  23 \\
    0 &\qquad  -32 &\qquad 0\,0\,0\,0\,0\,0\,0\,1\,3\,0\,1\,1\,2\,0\,0\,0\,0\,0\,2\,0\,0\,0\,2\,0\,0\,0\,0 &\qquad  648 &\qquad  24 \\
    0 &\qquad   -4 &\qquad 0\,0\,0\,0\,0\,0\,0\,1\,3\,0\,2\,0\,1\,0\,1\,0\,0\,0\,1\,1\,0\,2\,0\,0\,0\,0\,0 &\qquad 1296 &\qquad  25 \\
    0 &\qquad   16 &\qquad 0\,0\,0\,0\,0\,0\,0\,1\,3\,0\,2\,0\,1\,0\,1\,0\,0\,0\,2\,0\,0\,1\,1\,0\,0\,0\,0 &\qquad 1296 &\qquad  26 \\
    0 &\qquad   16 &\qquad 0\,0\,0\,0\,0\,0\,0\,1\,3\,0\,2\,0\,1\,1\,0\,0\,0\,0\,2\,0\,0\,1\,0\,1\,0\,0\,0 &\qquad 1296 &\qquad  27 \\
    0 &\qquad  -24 &\qquad 0\,0\,0\,0\,0\,0\,0\,1\,3\,0\,2\,1\,1\,0\,0\,0\,0\,0\,1\,0\,0\,2\,1\,0\,0\,0\,0 &\qquad  648 &\qquad  28 \\
    0 &\qquad   -8 &\qquad 0\,0\,0\,0\,0\,0\,0\,1\,3\,0\,3\,0\,1\,0\,0\,0\,0\,0\,1\,0\,0\,2\,0\,1\,0\,0\,0 &\qquad  648 &\qquad  29 \\
    0 &\qquad   32 &\qquad 0\,0\,0\,0\,0\,0\,0\,1\,3\,1\,0\,0\,1\,1\,1\,0\,0\,0\,1\,2\,0\,1\,0\,0\,0\,0\,0 &\qquad  648 &\qquad  30 \\
    0 &\qquad  -56 &\qquad 0\,0\,0\,0\,0\,0\,0\,1\,3\,1\,0\,1\,1\,1\,0\,0\,0\,0\,1\,1\,0\,1\,1\,0\,0\,0\,0 &\qquad  648 &\qquad  31 \\
    0 &\qquad    6 &\qquad 0\,0\,0\,0\,0\,0\,0\,2\,2\,0\,0\,0\,2\,0\,2\,0\,0\,0\,2\,2\,0\,0\,0\,0\,0\,0\,0 &\qquad  108 &\qquad  32 \\
    0 &\qquad   24 &\qquad 0\,0\,0\,0\,0\,0\,0\,2\,2\,0\,0\,0\,2\,1\,1\,0\,0\,0\,2\,1\,1\,0\,0\,0\,0\,0\,0 &\qquad  162 &\qquad  33 \\
    0 &\qquad    8 &\qquad 0\,0\,0\,0\,0\,0\,0\,2\,2\,0\,0\,1\,1\,1\,1\,0\,0\,0\,2\,1\,0\,1\,0\,0\,0\,0\,0 &\qquad 1296 &\qquad  34 \\
    0 &\qquad   16 &\qquad 0\,0\,0\,0\,0\,0\,0\,2\,2\,0\,0\,1\,1\,1\,1\,0\,0\,0\,3\,0\,0\,0\,1\,0\,0\,0\,0 &\qquad  648 &\qquad  35 \\
    0 &\qquad    4 &\qquad 0\,0\,0\,0\,0\,0\,0\,2\,2\,0\,0\,1\,1\,2\,0\,0\,0\,0\,2\,0\,1\,1\,0\,0\,0\,0\,0 &\qquad 1296 &\qquad  36 \\
    0 &\qquad    8 &\qquad 0\,0\,0\,0\,0\,0\,0\,2\,2\,0\,0\,1\,1\,2\,0\,0\,0\,0\,3\,0\,0\,0\,0\,1\,0\,0\,0 &\qquad  648 &\qquad  37 \\
    0 &\qquad  -16 &\qquad 0\,0\,0\,0\,0\,0\,0\,2\,2\,0\,0\,1\,2\,0\,1\,0\,0\,0\,2\,1\,0\,0\,1\,0\,0\,0\,0 &\qquad  648 &\qquad  38 \\
    0 &\qquad  -16 &\qquad 0\,0\,0\,0\,0\,0\,0\,2\,2\,0\,0\,1\,2\,1\,0\,0\,0\,0\,2\,1\,0\,0\,0\,1\,0\,0\,0 &\qquad  324 &\qquad  39 \\
    0 &\qquad    6 &\qquad 0\,0\,0\,0\,0\,0\,0\,2\,2\,0\,0\,2\,0\,2\,0\,0\,0\,0\,2\,0\,0\,2\,0\,0\,0\,0\,0 &\qquad  324 &\qquad  40 \\
    0 &\qquad    4 &\qquad 0\,0\,0\,0\,0\,0\,0\,2\,2\,0\,0\,2\,1\,1\,0\,0\,0\,0\,1\,1\,0\,2\,0\,0\,0\,0\,0 &\qquad  648 &\qquad  41 \\
    0 &\qquad  -16 &\qquad 0\,0\,0\,0\,0\,0\,0\,2\,2\,0\,0\,2\,1\,1\,0\,0\,0\,0\,2\,0\,0\,1\,1\,0\,0\,0\,0 &\qquad 1296 &\qquad  42 \\
    0 &\qquad   16 &\qquad 0\,0\,0\,0\,0\,0\,0\,2\,2\,0\,0\,2\,2\,0\,0\,0\,0\,0\,2\,0\,0\,0\,2\,0\,0\,0\,0 &\qquad  324 &\qquad  43 \\
    0 &\qquad   24 &\qquad 0\,0\,0\,0\,0\,0\,0\,2\,2\,0\,1\,1\,0\,1\,1\,0\,0\,0\,2\,0\,0\,2\,0\,0\,0\,0\,0 &\qquad  162 &\qquad  44 \\
    0 &\qquad    8 &\qquad 0\,0\,0\,0\,0\,0\,0\,2\,2\,0\,1\,1\,1\,0\,1\,0\,0\,0\,1\,1\,0\,2\,0\,0\,0\,0\,0 &\qquad  648 &\qquad  45 \\
    0 &\qquad  -32 &\qquad 0\,0\,0\,0\,0\,0\,0\,2\,2\,0\,1\,1\,1\,0\,1\,0\,0\,0\,2\,0\,0\,1\,1\,0\,0\,0\,0 &\qquad 1296 &\qquad  46 \\
    0 &\qquad   64 &\qquad 0\,0\,0\,0\,0\,0\,0\,2\,2\,0\,1\,1\,2\,0\,0\,0\,0\,0\,2\,0\,0\,0\,1\,1\,0\,0\,0 &\qquad  162 &\qquad  47 \\
    0 &\qquad   24 &\qquad 0\,0\,0\,0\,0\,0\,0\,2\,2\,0\,1\,2\,1\,0\,0\,0\,0\,0\,1\,0\,0\,2\,1\,0\,0\,0\,0 &\qquad  648 &\qquad  48 \\
    0 &\qquad  -16 &\qquad 0\,0\,0\,0\,0\,0\,0\,2\,2\,1\,0\,0\,1\,0\,2\,0\,0\,0\,1\,2\,0\,1\,0\,0\,0\,0\,0 &\qquad  324 &\qquad  49 \\
    0 &\qquad  -64 &\qquad 0\,0\,0\,0\,0\,0\,0\,2\,2\,1\,0\,0\,1\,1\,1\,0\,0\,0\,1\,1\,1\,1\,0\,0\,0\,0\,0 &\qquad  162 &\qquad  50 \\
    0 &\qquad   56 &\qquad 0\,0\,0\,0\,0\,0\,0\,2\,2\,1\,0\,1\,1\,0\,1\,0\,0\,0\,1\,1\,0\,1\,1\,0\,0\,0\,0 &\qquad  324 &\qquad  51 \\
    0 &\qquad   56 &\qquad 0\,0\,0\,0\,0\,0\,0\,2\,2\,1\,0\,1\,1\,1\,0\,0\,0\,0\,1\,1\,0\,1\,0\,1\,0\,0\,0 &\qquad  162 &\qquad  52 \\
    0 &\qquad  -12 &\qquad 0\,0\,0\,0\,0\,0\,1\,1\,2\,0\,0\,0\,1\,2\,1\,0\,0\,0\,2\,1\,1\,0\,0\,0\,0\,0\,0 &\qquad  108 &\qquad  53 \\
    0 &\qquad   28 &\qquad 0\,0\,0\,0\,0\,0\,1\,1\,2\,0\,0\,1\,0\,2\,1\,0\,0\,0\,2\,1\,0\,1\,0\,0\,0\,0\,0 &\qquad 1296 &\qquad  54 \\
    0 &\qquad   -4 &\qquad 0\,0\,0\,0\,0\,0\,1\,1\,2\,0\,0\,1\,0\,2\,1\,0\,0\,0\,3\,0\,0\,0\,1\,0\,0\,0\,0 &\qquad 1296 &\qquad  55 \\
    0 &\qquad   -8 &\qquad 0\,0\,0\,0\,0\,0\,1\,1\,2\,0\,0\,1\,0\,3\,0\,0\,0\,0\,3\,0\,0\,0\,0\,1\,0\,0\,0 &\qquad  324 &\qquad  56 \\
    0 &\qquad  -24 &\qquad 0\,0\,0\,0\,0\,0\,1\,1\,2\,0\,0\,1\,1\,1\,1\,0\,0\,0\,1\,2\,0\,1\,0\,0\,0\,0\,0 &\qquad 1296 &\qquad  57 \\
    0 &\qquad    8 &\qquad 0\,0\,0\,0\,0\,0\,1\,1\,2\,0\,0\,1\,1\,2\,0\,0\,0\,0\,2\,1\,0\,0\,0\,1\,0\,0\,0 &\qquad  324 &\qquad  58 \\
    0 &\qquad  -16 &\qquad 0\,0\,0\,0\,0\,0\,1\,1\,2\,0\,0\,2\,0\,2\,0\,0\,0\,0\,1\,1\,0\,2\,0\,0\,0\,0\,0 &\qquad 1296 &\qquad  59 \\
    0 &\qquad    4 &\qquad 0\,0\,0\,0\,0\,0\,1\,1\,2\,0\,0\,2\,0\,2\,0\,0\,0\,0\,2\,0\,0\,1\,1\,0\,0\,0\,0 &\qquad  648 &\qquad  60 \\
    0 &\qquad   56 &\qquad 0\,0\,0\,0\,0\,0\,1\,1\,2\,0\,0\,2\,1\,1\,0\,0\,0\,0\,1\,1\,0\,1\,1\,0\,0\,0\,0 &\qquad  324 &\qquad  61 \\
    0 &\qquad    8 &\qquad 0\,0\,0\,0\,0\,0\,1\,1\,2\,0\,1\,0\,0\,1\,2\,0\,0\,0\,2\,1\,0\,1\,0\,0\,0\,0\,0 &\qquad 1296 &\qquad  62 \\
    0 &\qquad   16 &\qquad 0\,0\,0\,0\,0\,0\,1\,1\,2\,0\,1\,0\,0\,1\,2\,0\,0\,0\,3\,0\,0\,0\,1\,0\,0\,0\,0 &\qquad  648 &\qquad  63 \\
    0 &\qquad  -32 &\qquad 0\,0\,0\,0\,0\,0\,1\,1\,2\,0\,1\,0\,0\,2\,1\,0\,0\,0\,2\,0\,1\,1\,0\,0\,0\,0\,0 &\qquad  648 &\qquad  64 \\
    0 &\qquad  -32 &\qquad 0\,0\,0\,0\,0\,0\,1\,1\,2\,0\,1\,0\,1\,0\,2\,0\,0\,0\,2\,1\,0\,0\,1\,0\,0\,0\,0 &\qquad  648 &\qquad  65 \\
    0 &\qquad   32 &\qquad 0\,0\,0\,0\,0\,0\,1\,1\,2\,0\,1\,0\,1\,1\,1\,0\,0\,0\,1\,1\,1\,1\,0\,0\,0\,0\,0 &\qquad  324 &\qquad  66 \\
    0 &\qquad   16 &\qquad 0\,0\,0\,0\,0\,0\,1\,1\,2\,0\,1\,0\,1\,1\,1\,0\,0\,0\,2\,0\,1\,0\,1\,0\,0\,0\,0 &\qquad  648 &\qquad  67 \\
    0 &\qquad  -24 &\qquad 0\,0\,0\,0\,0\,0\,1\,1\,2\,0\,1\,1\,0\,1\,1\,0\,0\,0\,1\,1\,0\,2\,0\,0\,0\,0\,0 &\qquad 1296 &\qquad  68 \\
    0 &\qquad    8 &\qquad 0\,0\,0\,0\,0\,0\,1\,1\,2\,0\,1\,1\,0\,2\,0\,0\,0\,0\,1\,0\,1\,2\,0\,0\,0\,0\,0 &\qquad  648 &\qquad  69 \\
    0 &\qquad   28 &\qquad 0\,0\,0\,0\,0\,0\,1\,1\,2\,0\,1\,1\,0\,2\,0\,0\,0\,0\,2\,0\,0\,1\,0\,1\,0\,0\,0 &\qquad  648 &\qquad  70 \\
    0 &\qquad   -8 &\qquad 0\,0\,0\,0\,0\,0\,1\,1\,2\,0\,1\,1\,1\,0\,1\,0\,0\,0\,1\,1\,0\,1\,1\,0\,0\,0\,0 &\qquad  648 &\qquad  71 \\
    0 &\qquad   48 &\qquad 0\,0\,0\,0\,0\,0\,1\,1\,2\,0\,1\,1\,1\,0\,1\,0\,0\,0\,2\,0\,0\,0\,2\,0\,0\,0\,0 &\qquad  648 &\qquad  72 \\
  \bottomrule
  \end{array}
  \]
  \bigskip
  \caption{The basis invariants in degree 12: part 1}
  \label{degree12table1}
  \end{table}

  \begin{table} \tiny
  \[
  \begin{array}{ccccc}
  I_{12} &\qquad I'_{12} &\qquad
  \text{MINIMAL REPRESENTATIVE} &\qquad \text{ORBIT SIZE} &\qquad \#
  \\
  \midrule
    0 &\qquad   72 &\qquad 0\,0\,0\,0\,0\,0\,1\,1\,2\,0\,1\,1\,1\,1\,0\,0\,0\,0\,1\,1\,0\,1\,0\,1\,0\,0\,0 &\qquad  324 &\qquad  73 \\
    0 &\qquad  -64 &\qquad 0\,0\,0\,0\,0\,0\,1\,1\,2\,0\,1\,1\,1\,1\,0\,0\,0\,0\,2\,0\,0\,0\,1\,1\,0\,0\,0 &\qquad  648 &\qquad  74 \\
    0 &\qquad  -12 &\qquad 0\,0\,0\,0\,0\,1\,0\,1\,2\,0\,0\,1\,0\,2\,0\,1\,0\,0\,2\,1\,0\,1\,0\,0\,0\,0\,0 &\qquad  324 &\qquad  75 \\
    0 &\qquad   16 &\qquad 0\,0\,0\,0\,0\,1\,0\,1\,2\,0\,0\,1\,0\,2\,0\,1\,0\,0\,3\,0\,0\,0\,1\,0\,0\,0\,0 &\qquad  648 &\qquad  76 \\
    1 &\qquad   13 &\qquad 0\,0\,0\,0\,0\,1\,0\,1\,2\,0\,0\,1\,1\,1\,0\,0\,1\,0\,2\,1\,0\,1\,0\,0\,0\,0\,0 &\qquad  648 &\qquad  77 \\
   -1 &\qquad   -5 &\qquad 0\,0\,0\,0\,0\,1\,0\,1\,2\,0\,0\,1\,1\,1\,0\,0\,1\,0\,3\,0\,0\,0\,1\,0\,0\,0\,0 &\qquad 1296 &\qquad  78 \\
   -1 &\qquad   19 &\qquad 0\,0\,0\,0\,0\,1\,0\,1\,2\,0\,0\,1\,1\,1\,0\,1\,0\,0\,1\,2\,0\,1\,0\,0\,0\,0\,0 &\qquad 1296 &\qquad  79 \\
    1 &\qquad  -27 &\qquad 0\,0\,0\,0\,0\,1\,0\,1\,2\,0\,0\,1\,1\,1\,0\,1\,0\,0\,2\,1\,0\,0\,1\,0\,0\,0\,0 &\qquad 1296 &\qquad  80 \\
   -1 &\qquad   11 &\qquad 0\,0\,0\,0\,0\,1\,0\,1\,2\,0\,0\,1\,1\,2\,0\,0\,0\,0\,2\,1\,0\,0\,0\,0\,1\,0\,0 &\qquad 1296 &\qquad  81 \\
    1 &\qquad   13 &\qquad 0\,0\,0\,0\,0\,1\,0\,1\,2\,0\,0\,1\,1\,2\,0\,0\,0\,0\,3\,0\,0\,0\,0\,0\,0\,1\,0 &\qquad 1296 &\qquad  82 \\
   -1 &\qquad   -9 &\qquad 0\,0\,0\,0\,0\,1\,0\,1\,2\,0\,0\,1\,2\,0\,0\,0\,1\,0\,1\,2\,0\,1\,0\,0\,0\,0\,0 &\qquad 1296 &\qquad  83 \\
    1 &\qquad    5 &\qquad 0\,0\,0\,0\,0\,1\,0\,1\,2\,0\,0\,1\,2\,0\,0\,0\,1\,0\,2\,1\,0\,0\,1\,0\,0\,0\,0 &\qquad 1296 &\qquad  84 \\
    1 &\qquad   -7 &\qquad 0\,0\,0\,0\,0\,1\,0\,1\,2\,0\,0\,1\,2\,0\,0\,1\,0\,0\,0\,3\,0\,1\,0\,0\,0\,0\,0 &\qquad  648 &\qquad  85 \\
    2 &\qquad    2 &\qquad 0\,0\,0\,0\,0\,1\,0\,1\,2\,0\,0\,1\,2\,1\,0\,0\,0\,0\,1\,2\,0\,0\,0\,0\,1\,0\,0 &\qquad 1296 &\qquad  86 \\
   -2 &\qquad  -26 &\qquad 0\,0\,0\,0\,0\,1\,0\,1\,2\,0\,0\,1\,2\,1\,0\,0\,0\,0\,2\,1\,0\,0\,0\,0\,0\,1\,0 &\qquad 1296 &\qquad  87 \\
   -1 &\qquad   -5 &\qquad 0\,0\,0\,0\,0\,1\,0\,1\,2\,0\,0\,1\,3\,0\,0\,0\,0\,0\,0\,3\,0\,0\,0\,0\,1\,0\,0 &\qquad  648 &\qquad  88 \\
    1 &\qquad   13 &\qquad 0\,0\,0\,0\,0\,1\,0\,1\,2\,0\,0\,1\,3\,0\,0\,0\,0\,0\,1\,2\,0\,0\,0\,0\,0\,1\,0 &\qquad  648 &\qquad  89 \\
   -2 &\qquad  -18 &\qquad 0\,0\,0\,0\,0\,1\,0\,1\,2\,0\,1\,0\,1\,0\,1\,0\,1\,0\,2\,1\,0\,1\,0\,0\,0\,0\,0 &\qquad  324 &\qquad  90 \\
    2 &\qquad   26 &\qquad 0\,0\,0\,0\,0\,1\,0\,1\,2\,0\,1\,0\,1\,0\,1\,0\,1\,0\,3\,0\,0\,0\,1\,0\,0\,0\,0 &\qquad  648 &\qquad  91 \\
    1 &\qquad   13 &\qquad 0\,0\,0\,0\,0\,1\,0\,1\,2\,0\,1\,0\,1\,0\,1\,1\,0\,0\,1\,2\,0\,1\,0\,0\,0\,0\,0 &\qquad 1296 &\qquad  92 \\
   -1 &\qquad  -21 &\qquad 0\,0\,0\,0\,0\,1\,0\,1\,2\,0\,1\,0\,1\,0\,1\,1\,0\,0\,2\,1\,0\,0\,1\,0\,0\,0\,0 &\qquad 1296 &\qquad  93 \\
    0 &\qquad   16 &\qquad 0\,0\,0\,0\,0\,1\,0\,1\,2\,0\,1\,0\,1\,1\,0\,0\,0\,1\,2\,1\,0\,1\,0\,0\,0\,0\,0 &\qquad  648 &\qquad  94 \\
    0 &\qquad   32 &\qquad 0\,0\,0\,0\,0\,1\,0\,1\,2\,0\,1\,0\,1\,1\,0\,0\,0\,1\,3\,0\,0\,0\,1\,0\,0\,0\,0 &\qquad  216 &\qquad  95 \\
    1 &\qquad   13 &\qquad 0\,0\,0\,0\,0\,1\,0\,1\,2\,0\,1\,0\,1\,1\,0\,0\,1\,0\,2\,0\,1\,1\,0\,0\,0\,0\,0 &\qquad 1296 &\qquad  96 \\
   -1 &\qquad   -5 &\qquad 0\,0\,0\,0\,0\,1\,0\,1\,2\,0\,1\,0\,1\,1\,0\,0\,1\,0\,3\,0\,0\,0\,0\,1\,0\,0\,0 &\qquad 1296 &\qquad  97 \\
    0 &\qquad   -8 &\qquad 0\,0\,0\,0\,0\,1\,0\,1\,2\,0\,1\,0\,1\,1\,0\,1\,0\,0\,1\,1\,1\,1\,0\,0\,0\,0\,0 &\qquad 1296 &\qquad  98 \\
    1 &\qquad  -27 &\qquad 0\,0\,0\,0\,0\,1\,0\,1\,2\,0\,1\,0\,1\,1\,0\,1\,0\,0\,2\,0\,1\,0\,1\,0\,0\,0\,0 &\qquad 1296 &\qquad  99 \\
   -1 &\qquad  -21 &\qquad 0\,0\,0\,0\,0\,1\,0\,1\,2\,0\,1\,0\,1\,1\,0\,1\,0\,0\,2\,1\,0\,0\,0\,1\,0\,0\,0 &\qquad 1296 &\qquad 100 \\
    0 &\qquad  -12 &\qquad 0\,0\,0\,0\,0\,1\,0\,1\,2\,0\,1\,0\,1\,2\,0\,0\,0\,0\,2\,0\,1\,0\,0\,0\,1\,0\,0 &\qquad  216 &\qquad 101 \\
    0 &\qquad   16 &\qquad 0\,0\,0\,0\,0\,1\,0\,1\,2\,0\,1\,0\,1\,2\,0\,0\,0\,0\,3\,0\,0\,0\,0\,0\,0\,0\,1 &\qquad  648 &\qquad 102 \\
    0 &\qquad    8 &\qquad 0\,0\,0\,0\,0\,1\,0\,1\,2\,0\,1\,0\,2\,0\,0\,0\,0\,1\,1\,2\,0\,1\,0\,0\,0\,0\,0 &\qquad 1296 &\qquad 103 \\
    0 &\qquad  -32 &\qquad 0\,0\,0\,0\,0\,1\,0\,1\,2\,0\,1\,0\,2\,0\,0\,0\,0\,1\,2\,1\,0\,0\,1\,0\,0\,0\,0 &\qquad  648 &\qquad 104 \\
    1 &\qquad   13 &\qquad 0\,0\,0\,0\,0\,1\,0\,1\,2\,0\,1\,0\,2\,0\,0\,0\,1\,0\,1\,1\,1\,1\,0\,0\,0\,0\,0 &\qquad 1296 &\qquad 105 \\
   -2 &\qquad  -26 &\qquad 0\,0\,0\,0\,0\,1\,0\,1\,2\,0\,1\,0\,2\,0\,0\,0\,1\,0\,2\,0\,1\,0\,1\,0\,0\,0\,0 &\qquad 1296 &\qquad 106 \\
    1 &\qquad    5 &\qquad 0\,0\,0\,0\,0\,1\,0\,1\,2\,0\,1\,0\,2\,0\,0\,0\,1\,0\,2\,1\,0\,0\,0\,1\,0\,0\,0 &\qquad 1296 &\qquad 107 \\
   -1 &\qquad  -29 &\qquad 0\,0\,0\,0\,0\,1\,0\,1\,2\,0\,1\,0\,2\,0\,0\,1\,0\,0\,0\,2\,1\,1\,0\,0\,0\,0\,0 &\qquad  216 &\qquad 108 \\
    1 &\qquad   53 &\qquad 0\,0\,0\,0\,0\,1\,0\,1\,2\,0\,1\,0\,2\,0\,0\,1\,0\,0\,1\,1\,1\,0\,1\,0\,0\,0\,0 &\qquad 1296 &\qquad 109 \\
    0 &\qquad    8 &\qquad 0\,0\,0\,0\,0\,1\,0\,1\,2\,0\,1\,0\,2\,0\,0\,1\,0\,0\,1\,2\,0\,0\,0\,1\,0\,0\,0 &\qquad 1296 &\qquad 110 \\
   -1 &\qquad   -9 &\qquad 0\,0\,0\,0\,0\,1\,0\,1\,2\,0\,1\,0\,2\,0\,1\,0\,0\,0\,1\,2\,0\,0\,0\,0\,1\,0\,0 &\qquad 1296 &\qquad 111 \\
    1 &\qquad    5 &\qquad 0\,0\,0\,0\,0\,1\,0\,1\,2\,0\,1\,0\,2\,0\,1\,0\,0\,0\,2\,1\,0\,0\,0\,0\,0\,1\,0 &\qquad 1296 &\qquad 112 \\
   -1 &\qquad   19 &\qquad 0\,0\,0\,0\,0\,1\,0\,1\,2\,0\,1\,0\,2\,1\,0\,0\,0\,0\,1\,1\,1\,0\,0\,0\,1\,0\,0 &\qquad 1296 &\qquad 113 \\
    1 &\qquad    5 &\qquad 0\,0\,0\,0\,0\,1\,0\,1\,2\,0\,1\,0\,2\,1\,0\,0\,0\,0\,2\,0\,1\,0\,0\,0\,0\,1\,0 &\qquad 1296 &\qquad 114 \\
    0 &\qquad  -32 &\qquad 0\,0\,0\,0\,0\,1\,0\,1\,2\,0\,1\,0\,2\,1\,0\,0\,0\,0\,2\,1\,0\,0\,0\,0\,0\,0\,1 &\qquad  648 &\qquad 115 \\
   -1 &\qquad   -5 &\qquad 0\,0\,0\,0\,0\,1\,0\,1\,2\,0\,1\,0\,3\,0\,0\,0\,0\,0\,1\,1\,1\,0\,0\,0\,0\,1\,0 &\qquad 1296 &\qquad 116 \\
   -2 &\qquad   14 &\qquad 0\,0\,0\,0\,0\,1\,0\,1\,2\,0\,1\,1\,1\,0\,0\,1\,0\,0\,0\,2\,0\,2\,0\,0\,0\,0\,0 &\qquad  648 &\qquad 117 \\
    4 &\qquad  -60 &\qquad 0\,0\,0\,0\,0\,1\,0\,1\,2\,0\,1\,1\,1\,0\,0\,1\,0\,0\,1\,1\,0\,1\,1\,0\,0\,0\,0 &\qquad  648 &\qquad 118 \\
   -2 &\qquad   54 &\qquad 0\,0\,0\,0\,0\,1\,0\,1\,2\,0\,1\,1\,1\,0\,0\,1\,0\,0\,2\,0\,0\,0\,2\,0\,0\,0\,0 &\qquad  648 &\qquad 119 \\
    2 &\qquad   22 &\qquad 0\,0\,0\,0\,0\,1\,0\,1\,2\,0\,1\,1\,2\,0\,0\,0\,0\,0\,0\,2\,0\,1\,0\,0\,1\,0\,0 &\qquad  648 &\qquad 120 \\
   -3 &\qquad  -15 &\qquad 0\,0\,0\,0\,0\,1\,0\,1\,2\,0\,1\,1\,2\,0\,0\,0\,0\,0\,1\,1\,0\,0\,1\,0\,1\,0\,0 &\qquad 1296 &\qquad 121 \\
   -1 &\qquad  -21 &\qquad 0\,0\,0\,0\,0\,1\,0\,1\,2\,0\,1\,1\,2\,0\,0\,0\,0\,0\,1\,1\,0\,1\,0\,0\,0\,1\,0 &\qquad 1296 &\qquad 122 \\
    2 &\qquad   42 &\qquad 0\,0\,0\,0\,0\,1\,0\,1\,2\,0\,1\,1\,2\,0\,0\,0\,0\,0\,2\,0\,0\,0\,1\,0\,0\,1\,0 &\qquad 1296 &\qquad 123 \\
    2 &\qquad  -38 &\qquad 0\,0\,0\,0\,0\,1\,0\,1\,2\,0\,2\,0\,2\,0\,0\,0\,0\,0\,1\,0\,1\,0\,1\,0\,1\,0\,0 &\qquad  648 &\qquad 124 \\
    1 &\qquad    5 &\qquad 0\,0\,0\,0\,0\,1\,0\,1\,2\,0\,2\,0\,2\,0\,0\,0\,0\,0\,1\,0\,1\,1\,0\,0\,0\,1\,0 &\qquad 1296 &\qquad 125 \\
   -2 &\qquad  -10 &\qquad 0\,0\,0\,0\,0\,1\,0\,1\,2\,0\,2\,0\,2\,0\,0\,0\,0\,0\,2\,0\,0\,0\,0\,1\,0\,1\,0 &\qquad  648 &\qquad 126 \\
    0 &\qquad   64 &\qquad 0\,0\,0\,0\,0\,1\,0\,1\,2\,0\,2\,0\,2\,0\,0\,0\,0\,0\,2\,0\,0\,0\,1\,0\,0\,0\,1 &\qquad  216 &\qquad 127 \\
   -1 &\qquad  -21 &\qquad 0\,0\,0\,0\,0\,1\,0\,1\,2\,1\,0\,0\,0\,1\,1\,1\,0\,0\,1\,2\,0\,1\,0\,0\,0\,0\,0 &\qquad 1296 &\qquad 128 \\
    1 &\qquad   13 &\qquad 0\,0\,0\,0\,0\,1\,0\,1\,2\,1\,0\,0\,0\,1\,1\,1\,0\,0\,2\,1\,0\,0\,1\,0\,0\,0\,0 &\qquad 1296 &\qquad 129 \\
    0 &\qquad   48 &\qquad 0\,0\,0\,0\,0\,1\,0\,1\,2\,1\,0\,0\,0\,2\,0\,0\,0\,1\,2\,1\,0\,1\,0\,0\,0\,0\,0 &\qquad  108 &\qquad 130 \\
    1 &\qquad  -27 &\qquad 0\,0\,0\,0\,0\,1\,0\,1\,2\,1\,0\,0\,0\,2\,0\,1\,0\,0\,1\,1\,1\,1\,0\,0\,0\,0\,0 &\qquad 1296 &\qquad 131 \\
   -1 &\qquad   -9 &\qquad 0\,0\,0\,0\,0\,1\,0\,1\,2\,1\,0\,0\,0\,2\,0\,1\,0\,0\,2\,1\,0\,0\,0\,1\,0\,0\,0 &\qquad  648 &\qquad 132 \\
    2 &\qquad   10 &\qquad 0\,0\,0\,0\,0\,1\,0\,1\,2\,1\,0\,0\,1\,0\,1\,0\,1\,0\,1\,2\,0\,1\,0\,0\,0\,0\,0 &\qquad 1296 &\qquad 133 \\
   -2 &\qquad  -18 &\qquad 0\,0\,0\,0\,0\,1\,0\,1\,2\,1\,0\,0\,1\,0\,1\,0\,1\,0\,2\,1\,0\,0\,1\,0\,0\,0\,0 &\qquad  648 &\qquad 134 \\
    0 &\qquad  -64 &\qquad 0\,0\,0\,0\,0\,1\,0\,1\,2\,1\,0\,0\,1\,1\,0\,0\,0\,1\,1\,2\,0\,1\,0\,0\,0\,0\,0 &\qquad  648 &\qquad 135 \\
   -2 &\qquad  -42 &\qquad 0\,0\,0\,0\,0\,1\,0\,1\,2\,1\,0\,0\,1\,1\,0\,0\,1\,0\,1\,1\,1\,1\,0\,0\,0\,0\,0 &\qquad 1296 &\qquad 136 \\
   -2 &\qquad   -2 &\qquad 0\,0\,0\,0\,0\,1\,0\,1\,2\,1\,0\,0\,1\,1\,0\,1\,0\,0\,1\,1\,1\,0\,1\,0\,0\,0\,0 &\qquad  648 &\qquad 137 \\
    1 &\qquad   13 &\qquad 0\,0\,0\,0\,0\,1\,0\,1\,2\,1\,0\,0\,1\,1\,0\,1\,0\,0\,1\,2\,0\,0\,0\,1\,0\,0\,0 &\qquad 1296 &\qquad 138 \\
   -1 &\qquad  -21 &\qquad 0\,0\,0\,0\,0\,1\,0\,1\,2\,1\,0\,0\,1\,1\,1\,0\,0\,0\,1\,2\,0\,0\,0\,0\,1\,0\,0 &\qquad 1296 &\qquad 139 \\
    1 &\qquad   13 &\qquad 0\,0\,0\,0\,0\,1\,0\,1\,2\,1\,0\,0\,1\,1\,1\,0\,0\,0\,2\,1\,0\,0\,0\,0\,0\,1\,0 &\qquad  648 &\qquad 140 \\
    1 &\qquad  -27 &\qquad 0\,0\,0\,0\,0\,1\,0\,1\,2\,1\,0\,0\,1\,2\,0\,0\,0\,0\,1\,1\,1\,0\,0\,0\,1\,0\,0 &\qquad 1296 &\qquad 141 \\
   -1 &\qquad   -9 &\qquad 0\,0\,0\,0\,0\,1\,0\,1\,2\,1\,0\,0\,2\,0\,0\,0\,1\,0\,1\,2\,0\,0\,0\,1\,0\,0\,0 &\qquad  648 &\qquad 142 \\
   -1 &\qquad   -9 &\qquad 0\,0\,0\,0\,0\,1\,0\,1\,2\,1\,0\,0\,2\,0\,1\,0\,0\,0\,1\,2\,0\,0\,0\,0\,0\,1\,0 &\qquad  216 &\qquad 143 \\
   -2 &\qquad   78 &\qquad 0\,0\,0\,0\,0\,1\,0\,1\,2\,1\,0\,1\,0\,1\,0\,1\,0\,0\,1\,1\,0\,1\,1\,0\,0\,0\,0 &\qquad  648 &\qquad 144 \\
  \bottomrule
  \end{array}
  \]
  \bigskip
  \caption{The basis invariants in degree 12: part 2}
  \label{degree12table2}
  \end{table}

  \begin{table} \tiny
  \[
  \begin{array}{ccccc}
  I_{12} &\qquad I'_{12} &\qquad
  \text{MINIMAL REPRESENTATIVE} &\qquad \text{ORBIT SIZE} &\qquad \#
  \\
  \midrule
    0 &\qquad  -32 &\qquad 0\,0\,0\,0\,0\,1\,0\,1\,2\,1\,0\,1\,0\,1\,0\,1\,0\,0\,2\,0\,0\,0\,2\,0\,0\,0\,0 &\qquad  648 &\qquad 145 \\
   -2 &\qquad   -2 &\qquad 0\,0\,0\,0\,0\,1\,0\,1\,2\,1\,0\,1\,1\,0\,0\,0\,1\,0\,1\,1\,0\,1\,1\,0\,0\,0\,0 &\qquad 1296 &\qquad 146 \\
    1 &\qquad    5 &\qquad 0\,0\,0\,0\,0\,1\,0\,1\,2\,1\,0\,1\,1\,0\,0\,0\,1\,0\,2\,0\,0\,0\,2\,0\,0\,0\,0 &\qquad 1296 &\qquad 147 \\
    0 &\qquad   32 &\qquad 0\,0\,0\,0\,0\,1\,0\,1\,2\,1\,0\,1\,1\,1\,0\,0\,0\,0\,1\,1\,0\,0\,1\,0\,1\,0\,0 &\qquad  648 &\qquad 148 \\
    4 &\qquad   60 &\qquad 0\,0\,0\,0\,0\,1\,0\,1\,2\,1\,0\,1\,1\,1\,0\,0\,0\,0\,1\,1\,0\,1\,0\,0\,0\,1\,0 &\qquad 1296 &\qquad 149 \\
   -1 &\qquad  -21 &\qquad 0\,0\,0\,0\,0\,1\,0\,1\,2\,1\,0\,1\,1\,1\,0\,0\,0\,0\,2\,0\,0\,0\,1\,0\,0\,1\,0 &\qquad 1296 &\qquad 150 \\
    0 &\qquad  112 &\qquad 0\,0\,0\,0\,0\,1\,0\,1\,2\,1\,1\,0\,1\,0\,0\,0\,0\,1\,1\,1\,0\,1\,1\,0\,0\,0\,0 &\qquad  216 &\qquad 151 \\
   -2 &\qquad   -2 &\qquad 0\,0\,0\,0\,0\,1\,0\,1\,2\,1\,1\,0\,1\,0\,0\,0\,1\,0\,1\,1\,0\,1\,0\,1\,0\,0\,0 &\qquad  648 &\qquad 152 \\
   -1 &\qquad  -21 &\qquad 0\,0\,0\,0\,0\,1\,0\,1\,2\,1\,1\,0\,1\,0\,0\,0\,1\,0\,2\,0\,0\,0\,1\,1\,0\,0\,0 &\qquad 1296 &\qquad 153 \\
    2 &\qquad   10 &\qquad 0\,0\,0\,0\,0\,1\,0\,1\,2\,1\,1\,0\,1\,1\,0\,0\,0\,0\,2\,0\,0\,0\,0\,1\,0\,1\,0 &\qquad  648 &\qquad 154 \\
    1 &\qquad  -11 &\qquad 0\,0\,0\,0\,0\,1\,0\,2\,1\,0\,0\,1\,0\,1\,1\,1\,0\,0\,3\,0\,0\,0\,1\,0\,0\,0\,0 &\qquad 1296 &\qquad 155 \\
   -1 &\qquad  -21 &\qquad 0\,0\,0\,0\,0\,1\,0\,2\,1\,0\,0\,1\,0\,2\,0\,1\,0\,0\,3\,0\,0\,0\,0\,1\,0\,0\,0 &\qquad 1296 &\qquad 156 \\
    0 &\qquad  -24 &\qquad 0\,0\,0\,0\,0\,1\,0\,2\,1\,0\,0\,1\,1\,0\,1\,1\,0\,0\,1\,2\,0\,1\,0\,0\,0\,0\,0 &\qquad  648 &\qquad 157 \\
    0 &\qquad   16 &\qquad 0\,0\,0\,0\,0\,1\,0\,2\,1\,0\,0\,1\,1\,0\,1\,1\,0\,0\,2\,1\,0\,0\,1\,0\,0\,0\,0 &\qquad 1296 &\qquad 158 \\
    1 &\qquad  -11 &\qquad 0\,0\,0\,0\,0\,1\,0\,2\,1\,0\,0\,1\,1\,1\,0\,0\,0\,1\,3\,0\,0\,0\,1\,0\,0\,0\,0 &\qquad 1296 &\qquad 159 \\
    2 &\qquad  -14 &\qquad 0\,0\,0\,0\,0\,1\,0\,2\,1\,0\,0\,1\,1\,1\,0\,1\,0\,0\,1\,1\,1\,1\,0\,0\,0\,0\,0 &\qquad 1296 &\qquad 160 \\
   -2 &\qquad   22 &\qquad 0\,0\,0\,0\,0\,1\,0\,2\,1\,0\,0\,1\,1\,1\,0\,1\,0\,0\,2\,0\,1\,0\,1\,0\,0\,0\,0 &\qquad  648 &\qquad 161 \\
    0 &\qquad   16 &\qquad 0\,0\,0\,0\,0\,1\,0\,2\,1\,0\,0\,1\,1\,1\,0\,1\,0\,0\,2\,1\,0\,0\,0\,1\,0\,0\,0 &\qquad 1296 &\qquad 162 \\
   -1 &\qquad  -21 &\qquad 0\,0\,0\,0\,0\,1\,0\,2\,1\,0\,0\,1\,1\,2\,0\,0\,0\,0\,3\,0\,0\,0\,0\,0\,0\,0\,1 &\qquad 1296 &\qquad 163 \\
    1 &\qquad    5 &\qquad 0\,0\,0\,0\,0\,1\,0\,2\,1\,0\,0\,1\,2\,0\,0\,0\,0\,1\,1\,2\,0\,1\,0\,0\,0\,0\,0 &\qquad 1296 &\qquad 164 \\
   -1 &\qquad   11 &\qquad 0\,0\,0\,0\,0\,1\,0\,2\,1\,0\,0\,1\,2\,0\,0\,0\,0\,1\,2\,1\,0\,0\,1\,0\,0\,0\,0 &\qquad 1296 &\qquad 165 \\
    1 &\qquad  -27 &\qquad 0\,0\,0\,0\,0\,1\,0\,2\,1\,0\,0\,1\,2\,0\,0\,1\,0\,0\,1\,1\,1\,0\,1\,0\,0\,0\,0 &\qquad 1296 &\qquad 166 \\
    1 &\qquad    5 &\qquad 0\,0\,0\,0\,0\,1\,0\,2\,1\,0\,0\,1\,2\,0\,0\,1\,0\,0\,1\,2\,0\,0\,0\,1\,0\,0\,0 &\qquad 1296 &\qquad 167 \\
   -1 &\qquad   11 &\qquad 0\,0\,0\,0\,0\,1\,0\,2\,1\,0\,0\,1\,2\,0\,1\,0\,0\,0\,1\,2\,0\,0\,0\,0\,1\,0\,0 &\qquad 1296 &\qquad 168 \\
   -3 &\qquad  -15 &\qquad 0\,0\,0\,0\,0\,1\,0\,2\,1\,0\,0\,1\,2\,1\,0\,0\,0\,0\,1\,1\,1\,0\,0\,0\,1\,0\,0 &\qquad 1296 &\qquad 169 \\
    2 &\qquad   42 &\qquad 0\,0\,0\,0\,0\,1\,0\,2\,1\,0\,0\,1\,2\,1\,0\,0\,0\,0\,2\,1\,0\,0\,0\,0\,0\,0\,1 &\qquad  648 &\qquad 170 \\
    2 &\qquad   10 &\qquad 0\,0\,0\,0\,0\,1\,0\,2\,1\,0\,0\,1\,3\,0\,0\,0\,0\,0\,0\,2\,1\,0\,0\,0\,1\,0\,0 &\qquad  648 &\qquad 171 \\
    1 &\qquad  -15 &\qquad 0\,0\,0\,0\,0\,1\,0\,2\,1\,0\,0\,2\,0\,1\,0\,1\,0\,0\,1\,1\,0\,2\,0\,0\,0\,0\,0 &\qquad  648 &\qquad 172 \\
   -1 &\qquad   11 &\qquad 0\,0\,0\,0\,0\,1\,0\,2\,1\,0\,0\,2\,0\,1\,0\,1\,0\,0\,2\,0\,0\,1\,1\,0\,0\,0\,0 &\qquad 1296 &\qquad 173 \\
    2 &\qquad   -2 &\qquad 0\,0\,0\,0\,0\,1\,0\,2\,1\,0\,0\,2\,1\,0\,0\,1\,0\,0\,0\,2\,0\,2\,0\,0\,0\,0\,0 &\qquad  648 &\qquad 174 \\
   -4 &\qquad   28 &\qquad 0\,0\,0\,0\,0\,1\,0\,2\,1\,0\,0\,2\,1\,0\,0\,1\,0\,0\,1\,1\,0\,1\,1\,0\,0\,0\,0 &\qquad 1296 &\qquad 175 \\
    2 &\qquad  -22 &\qquad 0\,0\,0\,0\,0\,1\,0\,2\,1\,0\,0\,2\,1\,0\,0\,1\,0\,0\,2\,0\,0\,0\,2\,0\,0\,0\,0 &\qquad 1296 &\qquad 176 \\
    0 &\qquad   16 &\qquad 0\,0\,0\,0\,0\,1\,0\,2\,1\,0\,0\,2\,1\,1\,0\,0\,0\,0\,1\,1\,0\,1\,0\,0\,1\,0\,0 &\qquad 1296 &\qquad 177 \\
   -1 &\qquad   11 &\qquad 0\,0\,0\,0\,0\,1\,0\,2\,1\,0\,0\,2\,1\,1\,0\,0\,0\,0\,2\,0\,0\,0\,1\,0\,1\,0\,0 &\qquad 1296 &\qquad 178 \\
   -2 &\qquad  -10 &\qquad 0\,0\,0\,0\,0\,1\,0\,2\,1\,0\,0\,2\,2\,0\,0\,0\,0\,0\,0\,2\,0\,1\,0\,0\,1\,0\,0 &\qquad  648 &\qquad 179 \\
    3 &\qquad   -1 &\qquad 0\,0\,0\,0\,0\,1\,0\,2\,1\,0\,0\,2\,2\,0\,0\,0\,0\,0\,1\,1\,0\,0\,1\,0\,1\,0\,0 &\qquad 1296 &\qquad 180 \\
    1 &\qquad   13 &\qquad 0\,0\,0\,0\,0\,1\,0\,2\,1\,0\,1\,0\,0\,0\,2\,1\,0\,0\,3\,0\,0\,0\,1\,0\,0\,0\,0 &\qquad  648 &\qquad 181 \\
   -1 &\qquad   -5 &\qquad 0\,0\,0\,0\,0\,1\,0\,2\,1\,0\,1\,0\,0\,1\,1\,1\,0\,0\,3\,0\,0\,0\,0\,1\,0\,0\,0 &\qquad 1296 &\qquad 182 \\
   -2 &\qquad  -42 &\qquad 0\,0\,0\,0\,0\,1\,0\,2\,1\,0\,1\,0\,1\,0\,1\,0\,0\,1\,3\,0\,0\,0\,1\,0\,0\,0\,0 &\qquad  648 &\qquad 183 \\
   -2 &\qquad   -2 &\qquad 0\,0\,0\,0\,0\,1\,0\,2\,1\,0\,1\,0\,1\,0\,1\,1\,0\,0\,1\,1\,1\,1\,0\,0\,0\,0\,0 &\qquad 1296 &\qquad 184 \\
    0 &\qquad   16 &\qquad 0\,0\,0\,0\,0\,1\,0\,2\,1\,0\,1\,0\,1\,0\,1\,1\,0\,0\,2\,0\,1\,0\,1\,0\,0\,0\,0 &\qquad 1296 &\qquad 185 \\
    2 &\qquad   10 &\qquad 0\,0\,0\,0\,0\,1\,0\,2\,1\,0\,1\,0\,1\,0\,1\,1\,0\,0\,2\,1\,0\,0\,0\,1\,0\,0\,0 &\qquad 1296 &\qquad 186 \\
    0 &\qquad   16 &\qquad 0\,0\,0\,0\,0\,1\,0\,2\,1\,0\,1\,0\,1\,1\,0\,1\,0\,0\,2\,0\,1\,0\,0\,1\,0\,0\,0 &\qquad  648 &\qquad 187 \\
    1 &\qquad  -11 &\qquad 0\,0\,0\,0\,0\,1\,0\,2\,1\,0\,1\,0\,1\,1\,1\,0\,0\,0\,3\,0\,0\,0\,0\,0\,0\,0\,1 &\qquad 1296 &\qquad 188 \\
   -1 &\qquad  -21 &\qquad 0\,0\,0\,0\,0\,1\,0\,2\,1\,0\,1\,0\,2\,0\,0\,0\,0\,1\,1\,1\,1\,1\,0\,0\,0\,0\,0 &\qquad 1296 &\qquad 189 \\
    2 &\qquad   42 &\qquad 0\,0\,0\,0\,0\,1\,0\,2\,1\,0\,1\,0\,2\,0\,0\,0\,0\,1\,2\,0\,1\,0\,1\,0\,0\,0\,0 &\qquad  648 &\qquad 190 \\
   -1 &\qquad  -29 &\qquad 0\,0\,0\,0\,0\,1\,0\,2\,1\,0\,1\,0\,2\,0\,0\,1\,0\,0\,1\,0\,2\,0\,1\,0\,0\,0\,0 &\qquad 1296 &\qquad 191 \\
   -1 &\qquad  -21 &\qquad 0\,0\,0\,0\,0\,1\,0\,2\,1\,0\,1\,0\,2\,0\,0\,1\,0\,0\,1\,1\,1\,0\,0\,1\,0\,0\,0 &\qquad 1296 &\qquad 192 \\
    3 &\qquad    7 &\qquad 0\,0\,0\,0\,0\,1\,0\,2\,1\,0\,1\,0\,2\,0\,1\,0\,0\,0\,1\,1\,1\,0\,0\,0\,1\,0\,0 &\qquad  648 &\qquad 193 \\
    1 &\qquad  -15 &\qquad 0\,0\,0\,0\,0\,1\,0\,2\,1\,0\,1\,0\,2\,1\,0\,0\,0\,0\,1\,0\,2\,0\,0\,0\,1\,0\,0 &\qquad  432 &\qquad 194 \\
   -2 &\qquad    2 &\qquad 0\,0\,0\,0\,0\,1\,0\,2\,1\,0\,1\,0\,3\,0\,0\,0\,0\,0\,0\,1\,2\,0\,0\,0\,1\,0\,0 &\qquad  648 &\qquad 195 \\
    3 &\qquad    7 &\qquad 0\,0\,0\,0\,0\,1\,0\,2\,1\,0\,1\,1\,0\,0\,1\,1\,0\,0\,1\,1\,0\,2\,0\,0\,0\,0\,0 &\qquad  648 &\qquad 196 \\
   -3 &\qquad  -15 &\qquad 0\,0\,0\,0\,0\,1\,0\,2\,1\,0\,1\,1\,0\,0\,1\,1\,0\,0\,2\,0\,0\,1\,1\,0\,0\,0\,0 &\qquad 1296 &\qquad 197 \\
    3 &\qquad   47 &\qquad 0\,0\,0\,0\,0\,1\,0\,2\,1\,0\,1\,1\,0\,1\,0\,1\,0\,0\,2\,0\,0\,1\,0\,1\,0\,0\,0 &\qquad  648 &\qquad 198 \\
    1 &\qquad   53 &\qquad 0\,0\,0\,0\,0\,1\,0\,2\,1\,0\,1\,1\,1\,0\,0\,0\,0\,1\,2\,0\,0\,1\,1\,0\,0\,0\,0 &\qquad 1296 &\qquad 199 \\
    0 &\qquad  -24 &\qquad 0\,0\,0\,0\,0\,1\,0\,2\,1\,0\,1\,1\,1\,0\,0\,1\,0\,0\,0\,1\,1\,2\,0\,0\,0\,0\,0 &\qquad  648 &\qquad 200 \\
    2 &\qquad   26 &\qquad 0\,0\,0\,0\,0\,1\,0\,2\,1\,0\,1\,1\,1\,0\,0\,1\,0\,0\,1\,0\,1\,1\,1\,0\,0\,0\,0 &\qquad 1296 &\qquad 201 \\
   -2 &\qquad   38 &\qquad 0\,0\,0\,0\,0\,1\,0\,2\,1\,0\,1\,1\,1\,0\,0\,1\,0\,0\,1\,1\,0\,1\,0\,1\,0\,0\,0 &\qquad 1296 &\qquad 202 \\
    0 &\qquad  -64 &\qquad 0\,0\,0\,0\,0\,1\,0\,2\,1\,0\,1\,1\,1\,0\,0\,1\,0\,0\,2\,0\,0\,0\,1\,1\,0\,0\,0 &\qquad 1296 &\qquad 203 \\
   -2 &\qquad   -2 &\qquad 0\,0\,0\,0\,0\,1\,0\,2\,1\,0\,1\,1\,1\,0\,1\,0\,0\,0\,1\,1\,0\,1\,0\,0\,1\,0\,0 &\qquad 1296 &\qquad 204 \\
    3 &\qquad   47 &\qquad 0\,0\,0\,0\,0\,1\,0\,2\,1\,0\,1\,1\,1\,0\,1\,0\,0\,0\,2\,0\,0\,0\,1\,0\,1\,0\,0 &\qquad 1296 &\qquad 205 \\
    2 &\qquad  -14 &\qquad 0\,0\,0\,0\,0\,1\,0\,2\,1\,0\,1\,1\,1\,1\,0\,0\,0\,0\,1\,0\,1\,1\,0\,0\,1\,0\,0 &\qquad 1296 &\qquad 206 \\
   -3 &\qquad  -15 &\qquad 0\,0\,0\,0\,0\,1\,0\,2\,1\,0\,1\,1\,1\,1\,0\,0\,0\,0\,2\,0\,0\,0\,0\,1\,1\,0\,0 &\qquad 1296 &\qquad 207 \\
    1 &\qquad   53 &\qquad 0\,0\,0\,0\,0\,1\,0\,2\,1\,0\,1\,1\,1\,1\,0\,0\,0\,0\,2\,0\,0\,1\,0\,0\,0\,0\,1 &\qquad 1296 &\qquad 208 \\
   -1 &\qquad   59 &\qquad 0\,0\,0\,0\,0\,1\,0\,2\,1\,0\,1\,1\,2\,0\,0\,0\,0\,0\,1\,0\,1\,0\,1\,0\,1\,0\,0 &\qquad 1296 &\qquad 209 \\
    1 &\qquad  -27 &\qquad 0\,0\,0\,0\,0\,1\,0\,2\,1\,0\,1\,1\,2\,0\,0\,0\,0\,0\,1\,1\,0\,0\,0\,1\,1\,0\,0 &\qquad 1296 &\qquad 210 \\
   -2 &\qquad -106 &\qquad 0\,0\,0\,0\,0\,1\,0\,2\,1\,0\,1\,1\,2\,0\,0\,0\,0\,0\,2\,0\,0\,0\,1\,0\,0\,0\,1 &\qquad  648 &\qquad 211 \\
   -2 &\qquad  -58 &\qquad 0\,0\,0\,0\,0\,1\,0\,2\,1\,0\,1\,2\,1\,0\,0\,0\,0\,0\,1\,0\,0\,1\,1\,0\,1\,0\,0 &\qquad 1296 &\qquad 212 \\
    1 &\qquad    5 &\qquad 0\,0\,0\,0\,0\,1\,0\,2\,1\,0\,2\,0\,0\,0\,1\,1\,0\,0\,2\,0\,0\,1\,0\,1\,0\,0\,0 &\qquad  648 &\qquad 213 \\
   -1 &\qquad   11 &\qquad 0\,0\,0\,0\,0\,1\,0\,2\,1\,0\,2\,0\,1\,0\,0\,0\,0\,1\,2\,0\,0\,1\,0\,1\,0\,0\,0 &\qquad  648 &\qquad 214 \\
    4 &\qquad    4 &\qquad 0\,0\,0\,0\,0\,1\,0\,2\,1\,0\,2\,0\,1\,0\,0\,1\,0\,0\,1\,0\,1\,1\,0\,1\,0\,0\,0 &\qquad 1296 &\qquad 215 \\
   -2 &\qquad  -10 &\qquad 0\,0\,0\,0\,0\,1\,0\,2\,1\,0\,2\,0\,1\,0\,0\,1\,0\,0\,2\,0\,0\,0\,0\,2\,0\,0\,0 &\qquad  648 &\qquad 216 \\
  \bottomrule
  \end{array}
  \]
  \bigskip
  \caption{The basis invariants in degree 12: part 3}
  \label{degree12table3}
  \end{table}

  \begin{table} \tiny
  \[
  \begin{array}{ccccc}
  I_{12} &\qquad I'_{12} &\qquad
  \text{MINIMAL REPRESENTATIVE} &\qquad \text{ORBIT SIZE} &\qquad \#
  \\
  \midrule
    0 &\qquad  -24 &\qquad 0\,0\,0\,0\,0\,1\,0\,2\,1\,0\,2\,0\,1\,0\,1\,0\,0\,0\,1\,0\,1\,1\,0\,0\,1\,0\,0 &\qquad 1296 &\qquad 217 \\
    1 &\qquad    5 &\qquad 0\,0\,0\,0\,0\,1\,0\,2\,1\,0\,2\,0\,1\,0\,1\,0\,0\,0\,2\,0\,0\,0\,0\,1\,1\,0\,0 &\qquad 1296 &\qquad 218 \\
   -1 &\qquad   11 &\qquad 0\,0\,0\,0\,0\,1\,0\,2\,1\,0\,2\,0\,1\,0\,1\,0\,0\,0\,2\,0\,0\,1\,0\,0\,0\,0\,1 &\qquad 1296 &\qquad 219 \\
   -3 &\qquad   17 &\qquad 0\,0\,0\,0\,0\,1\,0\,2\,1\,0\,2\,0\,2\,0\,0\,0\,0\,0\,1\,0\,1\,0\,0\,1\,1\,0\,0 &\qquad 1296 &\qquad 220 \\
    2 &\qquad  -22 &\qquad 0\,0\,0\,0\,0\,1\,0\,2\,1\,0\,2\,0\,2\,0\,0\,0\,0\,0\,2\,0\,0\,0\,0\,1\,0\,0\,1 &\qquad  648 &\qquad 221 \\
    2 &\qquad   10 &\qquad 0\,0\,0\,0\,0\,1\,0\,2\,1\,0\,2\,1\,1\,0\,0\,0\,0\,0\,1\,0\,0\,1\,0\,1\,1\,0\,0 &\qquad 1296 &\qquad 222 \\
    0 &\qquad   32 &\qquad 0\,0\,0\,0\,0\,1\,0\,2\,1\,1\,0\,0\,0\,1\,1\,1\,0\,0\,1\,1\,1\,1\,0\,0\,0\,0\,0 &\qquad 1296 &\qquad 223 \\
   -2 &\qquad   22 &\qquad 0\,0\,0\,0\,0\,1\,0\,2\,1\,1\,0\,0\,1\,0\,1\,0\,0\,1\,1\,2\,0\,1\,0\,0\,0\,0\,0 &\qquad  324 &\qquad 224 \\
    0 &\qquad   -8 &\qquad 0\,0\,0\,0\,0\,1\,0\,2\,1\,1\,0\,0\,1\,0\,1\,1\,0\,0\,1\,1\,1\,0\,1\,0\,0\,0\,0 &\qquad 1296 &\qquad 225 \\
   -2 &\qquad  -18 &\qquad 0\,0\,0\,0\,0\,1\,0\,2\,1\,1\,0\,0\,1\,0\,1\,1\,0\,0\,1\,2\,0\,0\,0\,1\,0\,0\,0 &\qquad  648 &\qquad 226 \\
    2 &\qquad  106 &\qquad 0\,0\,0\,0\,0\,1\,0\,2\,1\,1\,0\,0\,1\,1\,0\,0\,0\,1\,1\,1\,1\,1\,0\,0\,0\,0\,0 &\qquad  648 &\qquad 227 \\
    0 &\qquad   -8 &\qquad 0\,0\,0\,0\,0\,1\,0\,2\,1\,1\,0\,0\,1\,1\,0\,1\,0\,0\,1\,1\,1\,0\,0\,1\,0\,0\,0 &\qquad 1296 &\qquad 228 \\
    0 &\qquad   32 &\qquad 0\,0\,0\,0\,0\,1\,0\,2\,1\,1\,0\,0\,1\,1\,1\,0\,0\,0\,1\,1\,1\,0\,0\,0\,1\,0\,0 &\qquad 1296 &\qquad 229 \\
    4 &\qquad  -20 &\qquad 0\,0\,0\,0\,0\,1\,0\,2\,1\,1\,0\,1\,0\,0\,1\,1\,0\,0\,1\,1\,0\,1\,1\,0\,0\,0\,0 &\qquad 1296 &\qquad 230 \\
    1 &\qquad   53 &\qquad 0\,0\,0\,0\,0\,1\,0\,2\,1\,1\,0\,1\,0\,1\,0\,0\,0\,1\,1\,1\,0\,2\,0\,0\,0\,0\,0 &\qquad  648 &\qquad 231 \\
    2 &\qquad  -54 &\qquad 0\,0\,0\,0\,0\,1\,0\,2\,1\,1\,0\,1\,0\,1\,0\,1\,0\,0\,1\,0\,1\,1\,1\,0\,0\,0\,0 &\qquad 1296 &\qquad 232 \\
   -2 &\qquad  -82 &\qquad 0\,0\,0\,0\,0\,1\,0\,2\,1\,1\,0\,1\,0\,1\,0\,1\,0\,0\,1\,1\,0\,1\,0\,1\,0\,0\,0 &\qquad 1296 &\qquad 233 \\
    0 &\qquad   32 &\qquad 0\,0\,0\,0\,0\,1\,0\,2\,1\,1\,0\,1\,0\,1\,1\,0\,0\,0\,1\,1\,0\,1\,0\,0\,1\,0\,0 &\qquad 1296 &\qquad 234 \\
   -2 &\qquad   22 &\qquad 0\,0\,0\,0\,0\,1\,0\,2\,1\,1\,0\,1\,0\,2\,0\,0\,0\,0\,1\,0\,1\,1\,0\,0\,1\,0\,0 &\qquad 1296 &\qquad 235 \\
    2 &\qquad  -54 &\qquad 0\,0\,0\,0\,0\,1\,0\,2\,1\,1\,0\,1\,1\,0\,0\,0\,0\,1\,1\,1\,0\,1\,1\,0\,0\,0\,0 &\qquad 1296 &\qquad 236 \\
    0 &\qquad   32 &\qquad 0\,0\,0\,0\,0\,1\,0\,2\,1\,1\,0\,1\,1\,0\,0\,1\,0\,0\,1\,1\,0\,0\,1\,1\,0\,0\,0 &\qquad 1296 &\qquad 237 \\
   -4 &\qquad  -36 &\qquad 0\,0\,0\,0\,0\,1\,0\,2\,1\,1\,0\,1\,1\,0\,1\,0\,0\,0\,1\,1\,0\,0\,1\,0\,1\,0\,0 &\qquad 1296 &\qquad 238 \\
    2 &\qquad   26 &\qquad 0\,0\,0\,0\,0\,1\,0\,2\,1\,1\,0\,1\,1\,1\,0\,0\,0\,0\,1\,1\,0\,0\,0\,1\,1\,0\,0 &\qquad 1296 &\qquad 239 \\
   -4 &\qquad -116 &\qquad 0\,0\,0\,0\,0\,1\,0\,2\,1\,1\,0\,1\,1\,1\,0\,0\,0\,0\,1\,1\,0\,1\,0\,0\,0\,0\,1 &\qquad  648 &\qquad 240 \\
   -2 &\qquad   22 &\qquad 0\,0\,0\,0\,0\,1\,0\,2\,1\,1\,1\,0\,0\,0\,1\,0\,0\,1\,1\,1\,0\,2\,0\,0\,0\,0\,0 &\qquad  648 &\qquad 241 \\
   -2 &\qquad   -2 &\qquad 0\,0\,0\,0\,0\,1\,0\,2\,1\,1\,1\,0\,0\,0\,1\,1\,0\,0\,1\,1\,0\,1\,0\,1\,0\,0\,0 &\qquad 1296 &\qquad 242 \\
    2 &\qquad   10 &\qquad 0\,0\,0\,0\,0\,1\,0\,2\,1\,1\,1\,0\,0\,0\,2\,0\,0\,0\,1\,1\,0\,1\,0\,0\,1\,0\,0 &\qquad  648 &\qquad 243 \\
    1 &\qquad   21 &\qquad 0\,0\,0\,0\,0\,1\,0\,3\,0\,0\,0\,1\,0\,1\,1\,1\,0\,0\,3\,0\,0\,0\,0\,1\,0\,0\,0 &\qquad  648 &\qquad 244 \\
    0 &\qquad   24 &\qquad 0\,0\,0\,0\,0\,1\,0\,3\,0\,0\,0\,1\,1\,0\,1\,1\,0\,0\,1\,1\,1\,1\,0\,0\,0\,0\,0 &\qquad  648 &\qquad 245 \\
   -1 &\qquad   -5 &\qquad 0\,0\,0\,0\,0\,1\,0\,3\,0\,0\,0\,1\,1\,0\,1\,1\,0\,0\,2\,1\,0\,0\,0\,1\,0\,0\,0 &\qquad 1296 &\qquad 246 \\
   -2 &\qquad  -10 &\qquad 0\,0\,0\,0\,0\,1\,0\,3\,0\,0\,0\,1\,1\,1\,0\,0\,0\,1\,3\,0\,0\,0\,0\,1\,0\,0\,0 &\qquad  108 &\qquad 247 \\
    2 &\qquad   10 &\qquad 0\,0\,0\,0\,0\,1\,0\,3\,0\,0\,0\,1\,2\,0\,0\,0\,0\,1\,2\,1\,0\,0\,0\,1\,0\,0\,0 &\qquad  648 &\qquad 248 \\
    3 &\qquad  -17 &\qquad 0\,0\,0\,0\,0\,1\,0\,3\,0\,0\,0\,2\,1\,0\,0\,1\,0\,0\,1\,1\,0\,1\,0\,1\,0\,0\,0 &\qquad  432 &\qquad 249 \\
   -2 &\qquad   22 &\qquad 0\,0\,0\,0\,0\,1\,0\,3\,0\,0\,0\,2\,1\,0\,0\,1\,0\,0\,2\,0\,0\,0\,1\,1\,0\,0\,0 &\qquad  648 &\qquad 250 \\
    2 &\qquad   42 &\qquad 0\,0\,0\,0\,0\,1\,0\,3\,0\,0\,0\,2\,2\,0\,0\,0\,0\,0\,2\,0\,0\,0\,1\,0\,0\,0\,1 &\qquad  216 &\qquad 251 \\
    1 &\qquad    1 &\qquad 0\,0\,0\,0\,0\,1\,0\,3\,0\,0\,0\,3\,0\,0\,0\,1\,0\,0\,0\,1\,0\,3\,0\,0\,0\,0\,0 &\qquad   72 &\qquad 252 \\
   -4 &\qquad   -4 &\qquad 0\,0\,0\,0\,0\,1\,0\,3\,0\,0\,1\,1\,1\,0\,0\,1\,0\,0\,1\,0\,1\,1\,0\,1\,0\,0\,0 &\qquad  648 &\qquad 253 \\
    2 &\qquad  -22 &\qquad 0\,0\,0\,0\,0\,1\,0\,3\,0\,1\,0\,0\,1\,0\,1\,0\,0\,1\,1\,1\,1\,1\,0\,0\,0\,0\,0 &\qquad  648 &\qquad 254 \\
    2 &\qquad   18 &\qquad 0\,0\,0\,0\,0\,1\,0\,3\,0\,1\,0\,0\,1\,0\,1\,1\,0\,0\,1\,1\,1\,0\,0\,1\,0\,0\,0 &\qquad  216 &\qquad 255 \\
    2 &\qquad   58 &\qquad 0\,0\,0\,0\,0\,1\,0\,3\,0\,1\,0\,1\,0\,1\,0\,1\,0\,0\,1\,0\,1\,1\,0\,1\,0\,0\,0 &\qquad  648 &\qquad 256 \\
   -4 &\qquad    4 &\qquad 0\,0\,0\,0\,0\,1\,1\,1\,1\,0\,0\,1\,0\,2\,0\,1\,0\,0\,1\,1\,1\,1\,0\,0\,0\,0\,0 &\qquad  324 &\qquad 257 \\
    3 &\qquad   47 &\qquad 0\,0\,0\,0\,0\,1\,1\,1\,1\,0\,0\,1\,0\,2\,0\,1\,0\,0\,2\,1\,0\,0\,0\,1\,0\,0\,0 &\qquad 1296 &\qquad 258 \\
    0 &\qquad   24 &\qquad 0\,0\,0\,0\,0\,1\,1\,1\,1\,0\,0\,1\,1\,1\,0\,0\,1\,0\,1\,1\,1\,1\,0\,0\,0\,0\,0 &\qquad  648 &\qquad 259 \\
   -2 &\qquad  -42 &\qquad 0\,0\,0\,0\,0\,1\,1\,1\,1\,0\,0\,1\,1\,1\,0\,0\,1\,0\,2\,1\,0\,0\,0\,1\,0\,0\,0 &\qquad 1296 &\qquad 260 \\
    4 &\qquad   20 &\qquad 0\,0\,0\,0\,0\,1\,1\,1\,1\,0\,0\,1\,1\,2\,0\,0\,0\,0\,1\,1\,1\,0\,0\,0\,1\,0\,0 &\qquad  648 &\qquad 261 \\
   -1 &\qquad  -21 &\qquad 0\,0\,0\,0\,0\,1\,1\,1\,1\,0\,0\,1\,1\,2\,0\,0\,0\,0\,2\,1\,0\,0\,0\,0\,0\,0\,1 &\qquad  648 &\qquad 262 \\
   -3 &\qquad   17 &\qquad 0\,0\,0\,0\,0\,1\,1\,1\,1\,0\,0\,2\,0\,1\,0\,1\,0\,0\,0\,2\,0\,2\,0\,0\,0\,0\,0 &\qquad  648 &\qquad 263 \\
    4 &\qquad  -20 &\qquad 0\,0\,0\,0\,0\,1\,1\,1\,1\,0\,0\,2\,0\,1\,0\,1\,0\,0\,1\,1\,0\,1\,1\,0\,0\,0\,0 &\qquad 1296 &\qquad 264 \\
   -1 &\qquad   11 &\qquad 0\,0\,0\,0\,0\,1\,1\,1\,1\,0\,0\,2\,0\,1\,0\,1\,0\,0\,2\,0\,0\,0\,2\,0\,0\,0\,0 &\qquad 1296 &\qquad 265 \\
    0 &\qquad   16 &\qquad 0\,0\,0\,0\,0\,1\,1\,1\,1\,0\,0\,2\,0\,2\,0\,0\,0\,0\,1\,1\,0\,1\,0\,0\,1\,0\,0 &\qquad  648 &\qquad 266 \\
    4 &\qquad    4 &\qquad 0\,0\,0\,0\,0\,1\,1\,1\,1\,0\,0\,2\,1\,1\,0\,0\,0\,0\,0\,2\,0\,1\,0\,0\,1\,0\,0 &\qquad 1296 &\qquad 267 \\
   -4 &\qquad  -36 &\qquad 0\,0\,0\,0\,0\,1\,1\,1\,1\,0\,0\,2\,1\,1\,0\,0\,0\,0\,1\,1\,0\,0\,1\,0\,1\,0\,0 &\qquad 1296 &\qquad 268 \\
    0 &\qquad   32 &\qquad 0\,0\,0\,0\,0\,1\,1\,1\,1\,0\,1\,0\,0\,1\,1\,1\,0\,0\,2\,0\,1\,0\,1\,0\,0\,0\,0 &\qquad 1296 &\qquad 269 \\
    2 &\qquad   18 &\qquad 0\,0\,0\,0\,0\,1\,1\,1\,1\,0\,1\,0\,1\,0\,1\,0\,1\,0\,1\,1\,1\,1\,0\,0\,0\,0\,0 &\qquad  648 &\qquad 270 \\
    2 &\qquad  -22 &\qquad 0\,0\,0\,0\,0\,1\,1\,1\,1\,0\,1\,0\,1\,0\,1\,1\,0\,0\,1\,1\,1\,0\,1\,0\,0\,0\,0 &\qquad  648 &\qquad 271 \\
   -2 &\qquad  -42 &\qquad 0\,0\,0\,0\,0\,1\,1\,1\,1\,0\,1\,0\,1\,1\,0\,0\,0\,1\,2\,0\,1\,0\,1\,0\,0\,0\,0 &\qquad  648 &\qquad 272 \\
   -4 &\qquad  -84 &\qquad 0\,0\,0\,0\,0\,1\,1\,1\,1\,0\,1\,0\,1\,1\,1\,0\,0\,0\,1\,1\,1\,0\,0\,0\,1\,0\,0 &\qquad  216 &\qquad 273 \\
   -2 &\qquad  -42 &\qquad 0\,0\,0\,0\,0\,1\,1\,1\,1\,0\,1\,0\,2\,0\,0\,0\,0\,1\,1\,1\,1\,0\,1\,0\,0\,0\,0 &\qquad  324 &\qquad 274 \\
   -2 &\qquad   -2 &\qquad 0\,0\,0\,0\,0\,1\,1\,1\,1\,0\,1\,0\,2\,0\,0\,0\,1\,0\,1\,1\,1\,0\,0\,1\,0\,0\,0 &\qquad  324 &\qquad 275 \\
   -4 &\qquad   36 &\qquad 0\,0\,0\,0\,0\,1\,1\,1\,1\,0\,1\,1\,0\,0\,1\,1\,0\,0\,1\,1\,0\,1\,1\,0\,0\,0\,0 &\qquad 1296 &\qquad 276 \\
    2 &\qquad   58 &\qquad 0\,0\,0\,0\,0\,1\,1\,1\,1\,0\,1\,1\,0\,1\,0\,1\,0\,0\,1\,0\,1\,1\,1\,0\,0\,0\,0 &\qquad  648 &\qquad 277 \\
   -2 &\qquad  -50 &\qquad 0\,0\,0\,0\,0\,1\,1\,1\,1\,0\,1\,1\,0\,1\,0\,1\,0\,0\,1\,1\,0\,1\,0\,1\,0\,0\,0 &\qquad 1296 &\qquad 278 \\
    2 &\qquad   18 &\qquad 0\,0\,0\,0\,0\,1\,1\,1\,1\,0\,1\,1\,0\,1\,1\,0\,0\,0\,1\,1\,0\,1\,0\,0\,1\,0\,0 &\qquad  648 &\qquad 279 \\
    0 &\qquad   32 &\qquad 0\,0\,0\,0\,0\,1\,1\,1\,1\,0\,1\,1\,0\,1\,1\,0\,0\,0\,2\,0\,0\,1\,0\,0\,0\,1\,0 &\qquad 1296 &\qquad 280 \\
   -2 &\qquad  -10 &\qquad 0\,0\,0\,0\,0\,1\,1\,1\,1\,0\,1\,1\,1\,0\,0\,0\,0\,1\,1\,1\,0\,1\,1\,0\,0\,0\,0 &\qquad 1296 &\qquad 281 \\
    0 &\qquad  -64 &\qquad 0\,0\,0\,0\,0\,1\,1\,1\,1\,0\,1\,1\,1\,0\,0\,0\,0\,1\,2\,0\,0\,0\,2\,0\,0\,0\,0 &\qquad 1296 &\qquad 282 \\
    0 &\qquad  -56 &\qquad 0\,0\,0\,0\,0\,1\,1\,1\,1\,0\,1\,1\,1\,0\,0\,0\,1\,0\,1\,1\,0\,1\,0\,1\,0\,0\,0 &\qquad 1296 &\qquad 283 \\
    2 &\qquad  106 &\qquad 0\,0\,0\,0\,0\,1\,1\,1\,1\,0\,1\,1\,1\,0\,0\,0\,1\,0\,2\,0\,0\,0\,1\,1\,0\,0\,0 &\qquad  648 &\qquad 284 \\
    0 &\qquad  -48 &\qquad 0\,0\,0\,0\,0\,1\,1\,1\,1\,0\,1\,1\,1\,0\,0\,1\,0\,0\,1\,0\,1\,0\,2\,0\,0\,0\,0 &\qquad 1296 &\qquad 285 \\
    2 &\qquad   58 &\qquad 0\,0\,0\,0\,0\,1\,1\,1\,1\,0\,1\,1\,1\,0\,0\,1\,0\,0\,1\,1\,0\,0\,1\,1\,0\,0\,0 &\qquad 1296 &\qquad 286 \\
    6 &\qquad   46 &\qquad 0\,0\,0\,0\,0\,1\,1\,1\,1\,0\,1\,1\,1\,0\,1\,0\,0\,0\,1\,1\,0\,1\,0\,0\,0\,1\,0 &\qquad  648 &\qquad 287 \\
   -4 &\qquad  -36 &\qquad 0\,0\,0\,0\,0\,1\,1\,1\,1\,0\,1\,1\,1\,0\,1\,0\,0\,0\,2\,0\,0\,0\,1\,0\,0\,1\,0 &\qquad 1296 &\qquad 288 \\
  \bottomrule
  \end{array}
  \]
  \bigskip
  \caption{The basis invariants in degree 12: part 4}
  \label{degree12table4}
  \end{table}

  \begin{table} \tiny
  \[
  \begin{array}{ccccc}
  I_{12} &\qquad I'_{12} &\qquad
  \text{MINIMAL REPRESENTATIVE} &\qquad \text{ORBIT SIZE} &\qquad \#
  \\
  \midrule
    0 &\qquad  -96 &\qquad 0\,0\,0\,0\,0\,1\,1\,1\,1\,0\,1\,1\,1\,1\,0\,0\,0\,0\,1\,1\,0\,1\,0\,0\,0\,0\,1 &\qquad  648 &\qquad 289 \\
    2 &\qquad  106 &\qquad 0\,0\,0\,0\,0\,1\,1\,1\,1\,0\,1\,1\,1\,1\,0\,0\,0\,0\,2\,0\,0\,0\,1\,0\,0\,0\,1 &\qquad 1296 &\qquad 290 \\
    4 &\qquad -108 &\qquad 0\,0\,0\,0\,0\,1\,1\,1\,1\,1\,1\,0\,0\,0\,1\,0\,0\,1\,1\,1\,0\,1\,1\,0\,0\,0\,0 &\qquad  324 &\qquad 291 \\
    2 &\qquad   10 &\qquad 0\,0\,0\,0\,0\,1\,1\,2\,0\,0\,0\,1\,1\,1\,0\,0\,0\,1\,2\,1\,0\,0\,0\,1\,0\,0\,0 &\qquad  324 &\qquad 292 \\
   -4 &\qquad  -20 &\qquad 0\,0\,0\,0\,0\,1\,1\,2\,0\,0\,0\,1\,2\,0\,0\,0\,0\,1\,1\,2\,0\,0\,0\,1\,0\,0\,0 &\qquad  324 &\qquad 293 \\
    1 &\qquad   21 &\qquad 0\,0\,0\,0\,0\,1\,1\,2\,0\,0\,0\,2\,0\,2\,0\,0\,0\,0\,2\,0\,0\,1\,0\,0\,0\,0\,1 &\qquad 1296 &\qquad 294 \\
   -1 &\qquad  -13 &\qquad 0\,0\,0\,0\,0\,1\,1\,2\,0\,0\,0\,2\,1\,0\,0\,0\,0\,1\,0\,2\,0\,2\,0\,0\,0\,0\,0 &\qquad  432 &\qquad 295 \\
    0 &\qquad   16 &\qquad 0\,0\,0\,0\,0\,1\,1\,2\,0\,0\,0\,2\,1\,0\,0\,0\,0\,1\,1\,1\,0\,1\,1\,0\,0\,0\,0 &\qquad 1296 &\qquad 296 \\
    4 &\qquad   84 &\qquad 0\,0\,0\,0\,0\,1\,1\,2\,0\,0\,0\,2\,1\,1\,0\,0\,0\,0\,1\,1\,0\,1\,0\,0\,0\,0\,1 &\qquad  648 &\qquad 297 \\
   -3 &\qquad  -63 &\qquad 0\,0\,0\,0\,0\,1\,1\,2\,0\,0\,0\,2\,1\,1\,0\,0\,0\,0\,2\,0\,0\,0\,1\,0\,0\,0\,1 &\qquad 1296 &\qquad 298 \\
    2 &\qquad   10 &\qquad 0\,0\,0\,0\,0\,1\,1\,2\,0\,0\,1\,0\,1\,0\,1\,0\,0\,1\,2\,0\,1\,0\,1\,0\,0\,0\,0 &\qquad  648 &\qquad 299 \\
   -2 &\qquad  -26 &\qquad 0\,0\,0\,0\,0\,1\,1\,2\,0\,0\,1\,0\,2\,0\,0\,0\,0\,1\,1\,0\,2\,0\,1\,0\,0\,0\,0 &\qquad  324 &\qquad 300 \\
    0 &\qquad  -48 &\qquad 0\,0\,0\,0\,0\,1\,1\,2\,0\,0\,1\,1\,1\,0\,0\,0\,0\,1\,1\,0\,1\,1\,1\,0\,0\,0\,0 &\qquad 1296 &\qquad 301 \\
    2 &\qquad   26 &\qquad 0\,0\,0\,0\,0\,1\,1\,2\,0\,0\,1\,1\,1\,0\,0\,0\,0\,1\,1\,1\,0\,1\,0\,1\,0\,0\,0 &\qquad 1296 &\qquad 302 \\
   -2 &\qquad   22 &\qquad 0\,0\,0\,0\,0\,1\,1\,2\,0\,0\,1\,1\,1\,0\,0\,0\,0\,1\,2\,0\,0\,0\,1\,1\,0\,0\,0 &\qquad  648 &\qquad 303 \\
   -4 &\qquad  -36 &\qquad 0\,0\,0\,0\,0\,1\,1\,2\,0\,0\,1\,1\,1\,0\,1\,0\,0\,0\,1\,1\,0\,1\,0\,0\,0\,0\,1 &\qquad 1296 &\qquad 304 \\
    1 &\qquad   53 &\qquad 0\,0\,0\,0\,0\,1\,1\,2\,0\,0\,1\,1\,1\,0\,1\,0\,0\,0\,2\,0\,0\,0\,1\,0\,0\,0\,1 &\qquad 1296 &\qquad 305 \\
    3 &\qquad  -33 &\qquad 0\,0\,0\,0\,0\,1\,1\,2\,0\,0\,1\,1\,1\,1\,0\,0\,0\,0\,2\,0\,0\,0\,0\,1\,0\,0\,1 &\qquad 1296 &\qquad 306 \\
   -2 &\qquad  -58 &\qquad 0\,0\,0\,0\,0\,1\,1\,2\,0\,0\,2\,0\,1\,0\,0\,0\,0\,1\,1\,0\,1\,1\,0\,1\,0\,0\,0 &\qquad 1296 &\qquad 307 \\
    4 &\qquad    4 &\qquad 0\,0\,0\,0\,0\,1\,1\,2\,0\,0\,2\,0\,1\,0\,1\,0\,0\,0\,1\,0\,1\,1\,0\,0\,0\,0\,1 &\qquad  648 &\qquad 308 \\
   -2 &\qquad   22 &\qquad 0\,0\,0\,0\,0\,1\,1\,2\,0\,0\,2\,1\,1\,0\,0\,0\,0\,0\,1\,0\,0\,1\,0\,1\,0\,0\,1 &\qquad  324 &\qquad 309 \\
    2 &\qquad   26 &\qquad 0\,0\,0\,0\,0\,1\,1\,2\,0\,1\,0\,1\,0\,1\,0\,0\,0\,1\,1\,0\,1\,1\,1\,0\,0\,0\,0 &\qquad 1296 &\qquad 310 \\
    0 &\qquad  -48 &\qquad 0\,0\,0\,0\,0\,1\,1\,2\,0\,1\,0\,1\,0\,1\,0\,0\,0\,1\,1\,1\,0\,1\,0\,1\,0\,0\,0 &\qquad  648 &\qquad 311 \\
   -4 &\qquad   44 &\qquad 0\,0\,0\,0\,0\,1\,1\,2\,0\,1\,0\,1\,1\,1\,0\,0\,0\,0\,1\,1\,0\,0\,0\,1\,0\,0\,1 &\qquad  648 &\qquad 312 \\
   -4 &\qquad   18 &\qquad 0\,0\,0\,0\,0\,2\,0\,2\,0\,0\,0\,2\,0\,0\,0\,2\,0\,0\,0\,2\,0\,2\,0\,0\,0\,0\,0 &\qquad   36 &\qquad 313 \\
    6 &\qquad  -34 &\qquad 0\,0\,0\,0\,0\,2\,0\,2\,0\,0\,0\,2\,0\,0\,0\,2\,0\,0\,1\,1\,0\,1\,1\,0\,0\,0\,0 &\qquad  324 &\qquad 314 \\
   -2 &\qquad   22 &\qquad 0\,0\,0\,0\,0\,2\,0\,2\,0\,0\,0\,2\,0\,0\,0\,2\,0\,0\,2\,0\,0\,0\,2\,0\,0\,0\,0 &\qquad  108 &\qquad 315 \\
   -5 &\qquad   23 &\qquad 0\,0\,0\,0\,0\,2\,0\,2\,0\,0\,0\,2\,1\,0\,0\,1\,0\,0\,1\,1\,0\,0\,1\,0\,1\,0\,0 &\qquad  648 &\qquad 316 \\
  -12 &\qquad   92 &\qquad 0\,0\,0\,0\,0\,2\,0\,2\,0\,0\,1\,1\,1\,0\,0\,1\,0\,0\,0\,1\,1\,1\,0\,0\,1\,0\,0 &\qquad  162 &\qquad 317 \\
    4 &\qquad  -76 &\qquad 0\,0\,0\,0\,0\,2\,0\,2\,0\,0\,1\,1\,1\,0\,0\,1\,0\,0\,1\,0\,1\,0\,1\,0\,1\,0\,0 &\qquad  648 &\qquad 318 \\
    0 &\qquad  128 &\qquad 0\,0\,0\,0\,0\,2\,0\,2\,0\,0\,1\,1\,1\,0\,0\,1\,0\,0\,2\,0\,0\,0\,1\,0\,0\,0\,1 &\qquad  324 &\qquad 319 \\
   -8 &\qquad   40 &\qquad 0\,0\,0\,0\,0\,2\,0\,2\,0\,1\,0\,1\,0\,0\,0\,1\,0\,1\,1\,1\,0\,1\,1\,0\,0\,0\,0 &\qquad  324 &\qquad 320 \\
   -4 &\qquad  108 &\qquad 0\,0\,0\,0\,0\,2\,0\,2\,0\,1\,0\,1\,0\,1\,0\,1\,0\,0\,1\,0\,1\,0\,1\,0\,1\,0\,0 &\qquad  324 &\qquad 321 \\
    4 &\qquad  164 &\qquad 0\,0\,0\,0\,0\,2\,0\,2\,0\,1\,0\,1\,0\,1\,0\,1\,0\,0\,1\,1\,0\,1\,0\,0\,0\,0\,1 &\qquad  324 &\qquad 322 \\
    2 &\qquad   10 &\qquad 0\,0\,0\,0\,0\,2\,0\,2\,0\,1\,0\,1\,1\,0\,0\,0\,0\,1\,1\,1\,0\,0\,1\,0\,1\,0\,0 &\qquad  648 &\qquad 323 \\
    8 &\qquad  -72 &\qquad 0\,0\,0\,0\,0\,2\,1\,1\,0\,0\,1\,1\,0\,0\,0\,1\,0\,1\,1\,1\,0\,1\,1\,0\,0\,0\,0 &\qquad 1296 &\qquad 324 \\
   -6 &\qquad   50 &\qquad 0\,0\,0\,0\,0\,2\,1\,1\,0\,0\,1\,1\,0\,1\,0\,1\,0\,0\,1\,0\,1\,1\,0\,0\,0\,1\,0 &\qquad  648 &\qquad 325 \\
   -2 &\qquad   38 &\qquad 0\,0\,0\,0\,0\,2\,1\,1\,0\,0\,1\,1\,0\,1\,0\,1\,0\,0\,1\,1\,0\,1\,0\,0\,0\,0\,1 &\qquad 1296 &\qquad 326 \\
    5 &\qquad   41 &\qquad 0\,0\,0\,0\,0\,2\,1\,1\,0\,0\,1\,1\,0\,1\,0\,1\,0\,0\,2\,0\,0\,0\,1\,0\,0\,0\,1 &\qquad  648 &\qquad 327 \\
    0 &\qquad  112 &\qquad 0\,0\,0\,0\,0\,2\,1\,1\,0\,0\,1\,1\,1\,0\,0\,0\,0\,1\,1\,1\,0\,0\,1\,0\,1\,0\,0 &\qquad 1296 &\qquad 328 \\
   -6 &\qquad  -30 &\qquad 0\,0\,0\,0\,0\,2\,1\,1\,0\,0\,1\,1\,1\,0\,0\,0\,0\,1\,1\,1\,0\,1\,0\,0\,0\,1\,0 &\qquad  432 &\qquad 329 \\
   -2 &\qquad  158 &\qquad 0\,0\,0\,0\,0\,2\,1\,1\,0\,0\,1\,1\,1\,0\,0\,0\,1\,0\,1\,0\,1\,0\,1\,0\,1\,0\,0 &\qquad  324 &\qquad 330 \\
   -1 &\qquad -181 &\qquad 0\,0\,0\,0\,0\,2\,1\,1\,0\,0\,1\,1\,1\,0\,0\,0\,1\,0\,2\,0\,0\,0\,1\,0\,0\,0\,1 &\qquad  648 &\qquad 331 \\
   -4 &\qquad -116 &\qquad 0\,0\,0\,0\,0\,2\,1\,1\,0\,0\,1\,1\,1\,0\,0\,1\,0\,0\,1\,1\,0\,0\,1\,0\,0\,0\,1 &\qquad 1296 &\qquad 332 \\
    2 &\qquad  -54 &\qquad 0\,0\,0\,0\,0\,2\,1\,1\,0\,0\,1\,1\,1\,1\,0\,0\,0\,0\,1\,1\,0\,0\,0\,0\,1\,0\,1 &\qquad  648 &\qquad 333 \\
   -6 &\qquad   34 &\qquad 0\,0\,0\,0\,0\,2\,1\,1\,0\,0\,1\,1\,1\,1\,0\,0\,0\,0\,2\,0\,0\,0\,0\,0\,0\,1\,1 &\qquad  324 &\qquad 334 \\
    4 &\qquad   20 &\qquad 0\,0\,0\,0\,0\,2\,1\,1\,0\,1\,1\,0\,0\,1\,0\,0\,0\,1\,1\,1\,0\,1\,0\,0\,0\,0\,1 &\qquad  324 &\qquad 335 \\
   24 &\qquad  -88 &\qquad 0\,0\,0\,0\,1\,1\,0\,1\,1\,0\,1\,1\,1\,0\,0\,1\,0\,0\,0\,1\,1\,1\,0\,0\,1\,0\,0 &\qquad   27 &\qquad 336 \\
   -8 &\qquad    8 &\qquad 0\,0\,0\,0\,1\,1\,0\,1\,1\,0\,1\,1\,1\,0\,0\,1\,0\,0\,1\,0\,1\,0\,1\,0\,1\,0\,0 &\qquad  648 &\qquad 337 \\
   10 &\qquad   -6 &\qquad 0\,0\,0\,0\,1\,1\,0\,1\,1\,1\,0\,1\,0\,0\,0\,1\,0\,1\,1\,1\,0\,1\,1\,0\,0\,0\,0 &\qquad  108 &\qquad 338 \\
   -8 &\qquad   48 &\qquad 0\,0\,0\,0\,1\,1\,0\,1\,1\,1\,0\,1\,0\,0\,0\,1\,1\,0\,1\,0\,1\,1\,1\,0\,0\,0\,0 &\qquad  324 &\qquad 339 \\
   -2 &\qquad   70 &\qquad 0\,0\,0\,0\,1\,1\,0\,1\,1\,1\,0\,1\,0\,0\,0\,1\,1\,0\,1\,1\,0\,1\,0\,1\,0\,0\,0 &\qquad  648 &\qquad 340 \\
    4 &\qquad  172 &\qquad 0\,0\,0\,0\,1\,1\,0\,1\,1\,1\,0\,1\,0\,1\,0\,1\,0\,0\,1\,0\,1\,1\,0\,0\,0\,1\,0 &\qquad  324 &\qquad 341 \\
   -2 &\qquad -130 &\qquad 0\,0\,0\,0\,1\,1\,0\,1\,1\,1\,0\,1\,0\,1\,0\,1\,0\,0\,1\,1\,0\,1\,0\,0\,0\,0\,1 &\qquad  324 &\qquad 342 \\
  -12 &\qquad   60 &\qquad 0\,0\,0\,0\,1\,1\,1\,0\,1\,0\,1\,1\,0\,0\,0\,1\,1\,0\,1\,0\,1\,1\,1\,0\,0\,0\,0 &\qquad  108 &\qquad 343 \\
   -4 &\qquad   76 &\qquad 0\,0\,0\,0\,1\,1\,1\,0\,1\,0\,1\,1\,0\,0\,0\,1\,1\,0\,1\,1\,0\,1\,0\,1\,0\,0\,0 &\qquad  324 &\qquad 344 \\
    2 &\qquad  -22 &\qquad 0\,0\,0\,0\,1\,1\,1\,0\,1\,0\,1\,1\,0\,0\,1\,1\,0\,0\,1\,1\,0\,1\,0\,0\,0\,1\,0 &\qquad 1296 &\qquad 345 \\
    4 &\qquad -108 &\qquad 0\,0\,0\,0\,1\,1\,1\,0\,1\,0\,1\,1\,0\,1\,0\,1\,0\,0\,1\,0\,1\,1\,0\,0\,0\,1\,0 &\qquad  648 &\qquad 346 \\
    0 &\qquad -136 &\qquad 0\,0\,0\,0\,1\,1\,1\,0\,1\,0\,1\,1\,1\,0\,0\,0\,0\,1\,1\,1\,0\,0\,1\,0\,1\,0\,0 &\qquad  216 &\qquad 347 \\
    6 &\qquad   46 &\qquad 0\,0\,0\,0\,1\,1\,1\,0\,1\,0\,1\,1\,1\,0\,0\,0\,1\,0\,1\,0\,1\,1\,0\,0\,0\,1\,0 &\qquad  648 &\qquad 348 \\
    4 &\qquad   92 &\qquad 0\,0\,0\,0\,1\,1\,1\,0\,1\,0\,1\,1\,1\,0\,0\,0\,1\,0\,1\,1\,0\,0\,0\,1\,1\,0\,0 &\qquad  648 &\qquad 349 \\
    0 &\qquad  -16 &\qquad 0\,0\,0\,0\,1\,1\,1\,0\,1\,0\,1\,1\,1\,0\,0\,0\,1\,0\,1\,1\,0\,1\,0\,0\,0\,0\,1 &\qquad 1296 &\qquad 350 \\
    8 &\qquad   88 &\qquad 0\,0\,1\,0\,0\,1\,1\,1\,0\,0\,0\,1\,0\,2\,0\,1\,0\,0\,1\,1\,0\,1\,0\,0\,0\,0\,1 &\qquad  216 &\qquad 351 \\
   -4 &\qquad -164 &\qquad 0\,0\,1\,0\,0\,1\,1\,1\,0\,0\,0\,1\,1\,1\,0\,0\,1\,0\,1\,1\,0\,1\,0\,0\,0\,0\,1 &\qquad  648 &\qquad 352 \\
    4 &\qquad  180 &\qquad 0\,0\,1\,0\,0\,1\,1\,1\,0\,0\,1\,0\,1\,0\,1\,0\,1\,0\,2\,0\,0\,0\,1\,0\,0\,0\,1 &\qquad  324 &\qquad 353 \\
    0 &\qquad  144 &\qquad 0\,0\,1\,0\,0\,1\,1\,1\,0\,0\,1\,0\,1\,0\,1\,1\,0\,0\,1\,1\,0\,0\,1\,0\,0\,0\,1 &\qquad  648 &\qquad 354 \\
   -4 &\qquad  236 &\qquad 0\,0\,1\,0\,0\,1\,1\,1\,0\,0\,1\,0\,1\,1\,0\,0\,0\,1\,1\,1\,0\,1\,0\,0\,0\,0\,1 &\qquad  108 &\qquad 355 \\
    0 &\qquad -128 &\qquad 0\,0\,1\,0\,0\,1\,1\,1\,0\,0\,1\,0\,1\,1\,0\,0\,0\,1\,2\,0\,0\,0\,1\,0\,0\,0\,1 &\qquad  216 &\qquad 356 \\
   -2 &\qquad  -82 &\qquad 0\,0\,1\,0\,1\,0\,1\,0\,1\,0\,1\,0\,1\,1\,0\,0\,0\,1\,2\,0\,0\,0\,0\,1\,0\,1\,0 &\qquad  324 &\qquad 357 \\
    6 &\qquad    6 &\qquad 0\,0\,1\,0\,1\,0\,1\,0\,1\,1\,0\,0\,0\,1\,1\,0\,1\,0\,1\,1\,0\,1\,0\,0\,0\,0\,1 &\qquad   72 &\qquad 358 \\
    2 &\qquad  222 &\qquad 0\,0\,1\,0\,1\,0\,2\,0\,0\,0\,2\,0\,1\,0\,0\,0\,0\,1\,1\,0\,0\,0\,0\,2\,0\,1\,0 &\qquad   36 &\qquad 359 \\
  \bottomrule
  \end{array}
  \]
  \bigskip
  \caption{The basis invariants in degree 12: part 5}
  \label{degree12table5}
  \end{table}



\end{document}